\newtheorem{theorem}{Theorem}
\newtheorem{lemma}[theorem]{Lemma}
\newtheorem{prop}[theorem]{Proposition}
\newtheorem{cor}[theorem]{Corollary}
\newtheorem{remark}[theorem]{Remark}
\newtheorem{problem}[theorem]{Problem}
\newcommand{\EE}{\mathbb{E}}
\newcommand{\I}{\mathbbm{1}}
\newcommand{\R}{\mathbb{R}}
\newcommand{\Z}{\mathbb{Z}}
\newcommand{\N}{\mathbb{N}}
\newcommand{\PP}{\mathbb{P}}
\newcommand{\hY}{\overline Y}
\newcommand{\hV}{\overline V}
\newcommand{\F}{\mathcal{F}}
\newcommand{\la} {\lambda}
\newcommand{\si}{\sigma}
\newcommand{\al}{\alpha}
\newcommand{\ga}{\gamma}
\newcommand{\om}{\omega}
\newcommand{\Om}{\Omega}
\newcommand{\eps}{\varepsilon}
\renewcommand{\epsilon}{\varepsilon}
\newcounter{constante}
\newcommand{\con}[1]{
\immediate\write 1{\noexpand\newlabel{#1}{{\theconstante}{\theconstante}}}
                    c_{\theconstante}
                    \stepcounter{constante}
                   }
\begin{document}

\title[Excited random walks] {Excursions of excited random walks on integers}

\author{Elena Kosygina and Martin P.W.\ Zerner} 

\thanks{\textit{2010
Mathematics Subject Classification.}  Primary: 
60G50, 
60K37; 
secondary:
60F17, 
60J70,  
60J80, 
60J85. 
}
\thanks{\textit{Key words:}\quad 
branching process, 
 cookie walk, diffusion approximation, excited random
walk, excursion, squared Bessel process, return time, strong transience.}

\begin{abstract}
Several phase transitions for excited random walks on the integers are known to be characterized by a certain drift parameter $\delta\in\R$. For recurrence/transience the critical threshold is $|\delta|=1$, for ballisticity it is $|\delta|=2$ and for diffusivity $|\delta|=4$. In this paper we establish a phase transition at $|\delta|=3$. We show that the  expected return time of the walker to the starting point, conditioned on return, is finite iff $|\delta|>3$.  This result follows from an explicit description of the tail behaviour of the return time as a function of $\delta$, which is achieved by diffusion approximation of related branching processes by squared Bessel processes.
\end{abstract}
\maketitle
\section{Introduction}
A transient random walk (RW) is called {\em strongly transient} if the expectation of its return time $R$
to the starting point, conditioned on $R<\infty$, is finite, see e.g.\ \cite[\S 3.2.6]{Hug95}  and the references therein. The simple symmetric RW on $\Z^d$ is strongly transient iff $d\ge 5$, see \cite[\S 3.3.4, Table 3.4]{Hug95}.  ``Under fairly general conditions, biased walks are strongly transient'' \cite[p.\ 127]{Hug95}. In the present paper we study the tail behavior of the depth and the duration of excursions of excited random walks (ERWs). In particular, we show that ERWs can be biased, in the sense of satisfying a strong law of large numbers with non-zero speed,  and at the same time be not strongly transient.   
Precise statements are given later in this section after we describe our model of ERW. (For a recent survey on ERW we refer the reader to \cite{KZ13}.) 

An ERW evolves in a so-called cookie environment. These are elements $\om=(\om(z,i))_{z\in\Z, i\ge 1}$ of
$\Om:=[0,1]^{\Z\times \N}$.
Given $\om\in\Om, z\in\Z$ and  $i\in\N$ we call $\om(z,i)$ the $i$-th cookie at site $z$ and $\om(z,\cdot)$ the stack of cookies at $z$. The cookie $\om(z,i)$ serves as transition probability from $z$ to $z+1$ of the ERW upon its $i$-th visit to $z$. More precisely, given $\om\in\Om$ and $x\in\Z$ an ERW starting at $x$ in the environment $\om$ is a process $(X_n)_{n\ge 0}$ on a suitable probability space $(\Om',\F',P_{x,\om})$ which satisfies for all $n\ge 0$:
\begin{eqnarray}\nonumber
  P_{x,\omega}[X_0=x]&=&1,\\
P_{x,\omega}[X_{n+1}=X_n+1\,|(X_i)_{0\le i\le n}]&=&
  \omega(X_n,\#\{i\le n\,|X_i=X_n\}),\label{erw}\\ 
  P_{x,\omega}[X_{n+1}=X_n-1\,|(X_i)_{0\le i\le n}]&=&
  1-\omega(X_n,\#\{i\le n\,|X_i=X_n\}).\nonumber
\end{eqnarray}
The environment $\om$ is chosen at random according to some probability measure $\PP$ on $(\Om,\F)$, where $\F$ is the canonical product Borel $\si$-field. Throughout the paper we assume that $\PP$ satisfies the following hypotheses (IID), (WEL), and (BD$_\mathrm{M}$) for some $M\in\N_0:=\N\cup\{0\}$.
\begin{equation*}
\mbox{The family $(\omega(z,\cdot))_{z\in\Z}$ of cookie stacks is i.i.d.\ under
$\PP$.}
\tag{IID}
\end{equation*}
We denote the distribution of $\omega(0,\cdot)$ under $\PP$ by $\nu$, so that $\PP=\bigotimes_{\Z}\nu$. 
To avoid degenerate cases we assume the following (weak) ellipticity hypothesis: 
\begin{equation*}
\PP\left[\forall i\in\N:\ \om(z,i)>0\right]>0,
\PP\left[\forall i\in\N:\ \om(z,i)<1\right]>0\quad\mbox{for all $z\in\Z$.}\tag{WEL}
\end{equation*}
If we assumed  only (IID) and (WEL) the model would include RWs in random i.i.d.\ environments (RWRE), since for them $\PP$-a.s.\ $\om(0,i)=\om(0,1)$ for all $i\ge 1$. However, for the ERW model considered in this paper we assume that there is a non-random $M\ge 0$ such that after $M$ visits to any site the ERW behaves on any subsequent visit to that site like a simple symmetric RW:
\begin{equation*}
\mbox{$\PP$-a.s.\ $\om(z,i)=1/2$  for all $z\in\Z$ and $i>M$.}
\tag{BD$_\mathrm{M}$} 
\end{equation*}
If we average the so-called {\em quenched} measure $P_{x,\om}$ defined above over the environment $\om$ we obtain the {\em averaged} (often also called {\em annealed}) measure $P_x[\cdot]:=\EE[P_{x,\om}[\cdot]]$ on $\Om\times\Om'$. The expectation operators with respect to $P_{x,\om}, \PP,$ and $P_x$ are denoted by 
$E_{x,\om}, \EE,$ and $E_x$, respectively.

Several features of the ERW can be characterized by the parameter
\begin{equation}\label{del}
\delta:=\EE\left[\sum_{i\ge 1}(2\om(0,i)-1)\right]=
\EE\left[\sum_{i=1}^M(2\om(0,i)-1)\right],
\end{equation}
which represents the expected total average displacement of the ERW after consumption of all the cookies at any given site. Most prominently, the ERW $(X_n)_{n\ge 0}$
\begin{itemize}
\item is transient, i.e.\ tends $P_0$-a.s.\ to $\pm\infty$,  iff $|\delta|> 1$ (see 
\cite[Th.\ 3.10]{KZ13} and the references therein),
\item is ballistic, i.e.\ has $P_0$-a.s.\ a deterministic non-zero speed $\lim_{n\to\infty}X_n/n$,  iff $|\delta|>2$ (see \cite[Th.\ 5.2]{KZ13} and the references therein),
\item converges after diffusive scaling under $P_0$ to a Brownian motion iff $|\delta|>4$ or $\delta=0$ (see \cite[Theorems 6.1, 6.3, 6.5, 6.6, 6.7]{KZ13} and the references therein).
\end{itemize}
In this paper we are concerned with the finite excursions of ERWs.
Let 
\[R:=\inf\{n\ge 1:\ X_n=X_0\}\]
be the time at which the RW returns to its starting point.
Denote for $k\in\Z$ the first passage time of $k$ by
\[T_k:=\inf\{n\ge 0:  X_n=k\}.\]
\begin{theorem}\label{ex}{\bf (Averaged excursion depth, duration, and return time)}
Let $\delta\in\R\backslash\{1\}$.
There are constants $\con{m0}(\nu),\con{t0}(\nu),\con{du}(\nu)\in (0,\infty)$ such that 
\begin{eqnarray}\label{M0}
\lim_{n\to\infty}n^{|\delta-1|}\,P_1[T_n<T_0<\infty]&=&c_{\ref{m0}},\\
\lim_{n\to\infty}n^{|\delta-1|/2}\,P_1[n<T_0<\infty]&=&c_{\ref{t0}},\label{T0}\\
\label{tu}
\lim_{n\to\infty}n^{||\delta|-1|/2}\, P_0[n<R<\infty]&=&c_{\ref{du}}.
\end{eqnarray}
Moreover, for $\delta=1$ and every $\eps>0$,
\begin{equation}\label{3s}
\lim_{n\to\infty}n^\eps\,P_1[T_n<T_0]=
\lim_{n\to\infty}n^\eps\,P_1[T_0>n]=
\lim_{n\to\infty}n^\eps\,P_0[R>n]=\infty.
\end{equation}
\end{theorem}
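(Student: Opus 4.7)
The strategy follows the branching-process/squared-Bessel framework that has become standard for ERW excursions since Basdevant--Singh, combined with Kesten--Kozlov--Spitzer-type tail estimates adapted to the cookie setting. For the walk started at $1$ and for $k\ge 1$, let $U_k$ denote the number of jumps from site $k$ to site $k-1$ made strictly before time $T_0$. The strong Markov property at the successive hitting times of the positive integers shows that $(U_k)_{k\ge 1}$ is a Markov chain (the forward edge-crossing chain) whose transitions for $k>M$ are those of a homogeneous branching-type process with offspring mean $1$ and a drift-type parameter equal to $\delta-1$. Two identities encode the theorem: $\{T_n<T_0\}=\{\min_{1\le k\le n}U_k>0\}$, so depth is survival; and on $\{T_0<\infty\}$ the relation $T_0+1=2\sum_{k\ge 1}U_k$ holds, so duration is twice the total progeny.

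The main technical input is a diffusion approximation: starting from an initial mass of order $n$ and rescaling space by $n$, the chain $(U_{\lfloor nt\rfloor}/n)_{t\ge 0}$ converges weakly to a squared Bessel process $(Y_t)_{t\ge 0}$ whose dimension is determined by $\delta$ and whose drift is proportional to $\delta-1$. Classical Bessel estimates then yield sharp tails for the hitting time of $0$ by $Y$ and for the integrated functional $\int_0^{\tau_0^Y} Y_s\,ds$. Transferring these to the pre-limit chain by uniform tightness and a separate treatment of the first $M$ cookie-dependent generations gives \eqref{M0}: when $|\delta|<1$ the walk is recurrent, so $T_0<\infty$ is automatic and the survival-probability estimate for $(U_k)$ started from one particle gives the depth asymptotic directly; when $\delta>1$ one applies the strong Markov property at $T_n$ and combines the survival estimate with an extinction estimate $P_n[T_0<\infty]\sim c\,n^{-(\delta-1)}$ for the backward edge-crossing chain; the case $\delta<-1$ follows by reflection. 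For \eqref{T0}, coupling the depth asymptotic with the weak convergence $n^{-2}\sum_{k\le n}U_k\to \int_0^{\infty}Y_s\,ds$ converts a depth-$n$ tail into a duration-$n^2$ tail, halving the exponent to $|\delta-1|/2$; the constant $c_{\ref{t0}}$ is read off from the joint limit.

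Equation \eqref{tu} follows by a first-step decomposition, $P_0[n<R<\infty]=\tfrac12 P_1[n-1<T_0<\infty]+\tfrac12\tilde P_{-1}[n-1<\tilde T_0<\infty]$, where the tilded measure refers to the walk in the reflected environment $\tilde\om(z,i)=1-\om(-z,i)$, whose drift parameter is $-\delta$. Applying \eqref{T0} to each summand and using the identity $\min(|\delta-1|,|\delta+1|)=||\delta|-1|$ shows that the smaller of the two exponents dominates and produces $c_{\ref{du}}$. For the critical case $\delta=1$ in \eqref{3s}, the limiting squared Bessel process has dimension exactly $2$, which almost surely never hits $0$; Lamperti-type analysis of the critical branching chain then forces the survival probability to decay only logarithmically in $n$, slower than every polynomial, and this slow decay transfers to duration and return time via the same total-progeny identity.

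The main obstacle is the diffusion approximation itself: not a soft weak-convergence statement on compact time windows but sharp tail asymptotics for both the hitting-time distribution and the integrated functional of the squared Bessel limit, together with uniform control of the pre-limit chain up to extinction and a careful analysis of the first $M$ generations where the cookie mechanism is still active. This is the step that upgrades matching exponents to the precise constants $c_{\ref{m0}}$, $c_{\ref{t0}}$, $c_{\ref{du}}$, and where the bulk of the technical work of the paper must lie.
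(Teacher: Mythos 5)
Your high-level strategy---read the excursion depth as the extinction time and the excursion duration as (essentially) twice the total progeny of the forward branching process $V$, pass to a squared Bessel diffusion limit, and extract tail asymptotics---is indeed the route the paper takes for $\delta\neq1$. A few of your details differ from the paper's in ways worth noting: the first-step decomposition for $R$ carries the weights $\EE[\om(0,1)]$ and $\EE[1-\om(0,1)]$, not $1/2$ and $1/2$, though this does not affect the conclusion since both are positive; the case $\delta<-1$ for \eqref{M0}--\eqref{T0} is handled by the same forward-BP analysis as $\delta\in(-1,1)$ (the drift parameter $\delta<1$ makes $V$ die a.s.\ and Theorem~\ref{sun} applies directly), not by reflection, and reflection is reserved solely for the passage from $T_0$ to $R$ in \eqref{tu}; and for $\delta>1$ the paper does not attempt a strong-Markov factorization of $P_1[T_n<T_0<\infty]$ at the walk's hitting time $T_n$ (which is delicate because the environment left of $n$ has already been partially eaten by time $T_n$), but instead performs a Doob $h$-transform on $V$ with $h(n)=P_n[\si_0^V<\infty]\sim c\,n^{1-\delta}$ and proves tail asymptotics for the extinction time and total progeny of the conditioned chain $\hV$.

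The genuine gap is your handling of $\delta=1$. You invoke a ``Lamperti-type analysis'' to claim that $P_1[T_n<T_0]$ decays logarithmically and hence slower than any polynomial, but the paper does not prove logarithmic decay for the general migration BP arising here; it explicitly labels this a conjecture, with the logarithmic rate established only in the special immigration-only (once-excited/strength-one-cookie) case via Zubkov-type results. A sharp critical estimate of the kind you need for a BP with both immigration and emigration is not in the literature and is not supplied in the paper. What the paper actually does for \eqref{3s} is a monotonicity comparison: it chooses a stochastically smaller cookie law $\hat\PP$ (so $\hat\PP[\om(0,i)\le t]\ge\PP[\om(0,i)\le t]$) whose parameter satisfies $1-\eps<\hat\delta<1$, uses \cite[Lemma 15]{Zer05} to get $\hat P_1[T_n<T_0]\le P_1[T_n<T_0]$ and $\hat P_1[T_0>n]\le P_1[T_0>n]$, and then applies the already-proved cases \eqref{M0}--\eqref{T0} at parameter $\hat\delta$ to force $n^\eps P_1[T_n<T_0]\to\infty$; the claim about $R$ then follows from the first-step decomposition \eqref{ben}. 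Replacing your unproven logarithmic-decay assertion with this comparison argument is the missing step.
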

An immediate consequence of (\ref{tu}) and (\ref{3s}) is the following result.
\begin{cor}\label{dog}{\bf (Averaged strong transience)}
$E_0[R, R<\infty]<\infty$ iff $|\delta|>3$.
\end{cor}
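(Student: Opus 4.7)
The approach is to express $E_0[R, R<\infty]$ as a tail sum and read off the answer from Theorem~\ref{ex}. Since $R$ is positive integer-valued, Fubini yields
$$E_0[R, R<\infty] = \sum_{n=0}^{\infty} P_0[n < R < \infty],$$
so the corollary amounts to deciding for which $\delta$ this series converges.

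For $\delta \notin \{-1, 1\}$, estimate (\ref{tu}) gives $P_0[n < R < \infty] \sim c_{\ref{du}}(\nu)\, n^{-||\delta|-1|/2}$ with $c_{\ref{du}}(\nu) > 0$, so the series converges iff $||\delta|-1|/2 > 1$, i.e.\ iff $||\delta|-1| > 2$. Since $|\delta| \geq 0$, the inequality $|\delta| - 1 < -2$ is vacuous, and the only remaining possibility is $|\delta| - 1 > 2$, i.e.\ $|\delta| > 3$. This settles all $\delta$ off the two critical values.

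It remains to establish divergence at $\delta = \pm 1$. Here the walk is recurrent (since $|\delta| \le 1$ corresponds to $|\delta| \not> 1$), hence $P_0[R<\infty]=1$ and $P_0[n<R<\infty] = P_0[R>n]$. For $\delta = 1$, the third limit in (\ref{3s}) gives $n^\eps P_0[R>n] \to \infty$ for every $\eps > 0$, so $P_0[R>n] \ge n^{-1/2}$ for all large $n$ and the tail sum diverges. For $\delta = -1$ I would invoke the standard left-right reflection symmetry of the ERW model: the involution $\wom(z,i) := 1 - \om(-z,i)$ pushes $\PP$ forward to a measure still satisfying (IID), (WEL), and (BD$_\mathrm{M}$) but with drift parameter $+1$, and under which $R$ has the same law as under $\PP$ (via the induced map $X_n \mapsto -X_n$). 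Applying (\ref{3s}) in the reflected model then yields divergence at $\delta = -1$ as well.

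The only mildly delicate point in the whole argument is this last symmetry step, and it must be articulated carefully: as stated, (\ref{tu}) includes $\delta = -1$ yet gives a positive limit $c_{\ref{du}}(\nu)$ that is plainly incompatible with recurrence there, so at $\delta = -1$ one cannot use (\ref{tu}) and must fall back on (\ref{3s}) via the reflection. Everything else is a routine matching of a polynomial tail against the comparison series $\sum n^{-s}$.
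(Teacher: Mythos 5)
Your proposal is correct and follows the same basic route as the paper: the paper's entire proof of the corollary is the sentence ``An immediate consequence of (\ref{tu}) and (\ref{3s}),'' which is precisely the tail-sum computation you carry out. The summability threshold $||\delta|-1|/2>1 \Leftrightarrow |\delta|>3$ and the divergence at $\delta=1$ via (\ref{3s}) are exactly right.

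Your extra caution at $\delta=-1$ is in fact well placed, and is the one point where you go beyond what the paper writes down. As you observe, (\ref{tu}) formally includes $\delta=-1$, where it would assert $P_0[n<R<\infty]\to c_{\ref{du}}>0$; but since the ERW is recurrent for $|\delta|\le 1$, this limit must actually be $0$, so (\ref{tu}) cannot be taken at face value there. Looking at the paper's proof of (\ref{tu}) in Section~\ref{main}, the step $P_{-1}[n\le T_0<\infty]=\widetilde P_1[n\le T_0<\infty]$ requires applying (\ref{T0}) to the reflected parameter $\widetilde\delta=-\delta$, which is excluded when $\delta=-1$ (then $\widetilde\delta=1$). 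So the paper's derivation of (\ref{tu}) quietly does not cover $\delta=-1$ either. Your substitute — the reflection $\wom(z,i)=1-\om(-z,i)$, which preserves (IID), (WEL), (BD$_\mathrm{M}$), sends $\delta\mapsto-\delta$, and leaves the law of $R$ unchanged, followed by (\ref{3s}) for the reflected parameter $+1$ — is exactly the mechanism the paper itself invokes one line earlier to prove (\ref{tu}), and it cleanly closes the gap. The conclusion of the corollary is unaffected, but your version is the more defensible one.
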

\begin{remark}{\rm {\bf (Case $\delta=1$)} Relations (\ref{3s}) are an easy consequence of (\ref{M0})-(\ref{tu}) (see the proof in Section \ref{main}). We believe that for $\delta=1$ the quantities
$P_1[T_n<T_0], P_1[T_0>n]$, and $P_0[R>n]$ have a logarithmic decay.
In the special case described in  Remark \ref{s1} below, the existence of a nontrivial limit of $(\ln n)P_1[T_n<T_0]$ as $n\to\infty$ follows from connections with branching processes with immigration
and \cite[second part of (21)]{Zub72}, see also \cite[Th.\ 1, part 2]{FYK90}, quoted in \cite[Th.\ A (ii)]{KZ08}. }
\end{remark}
\begin{remark}{\rm {\bf (Once-excited RWs)}
In the case of once-excited RWs  with identical cookies (i.e.\ $M=1$, $\PP$-a.s.\ $\om(z,1)=\om(0,1)\in(0,1)$ for all $z\in\Z$), results (\ref{M0}) and (\ref{T0}) have been obtained in \cite[Section 3.3]{AR05}. Note that the case $M=1$ is very special, since at time $T_k$ all the cookies $\om(z,i)\ne 1/2$ between the starting point 0 and the current location $k$ of the ERW  have been ``eaten''. This allows to use simple symmetric RW calculations between 0 and $k$. For $M\ge 2$ such simplification is not available.}
\end{remark}
\begin{problem}{\rm Find necessary and sufficient criteria under which RWRE in one dimension is strongly transient.}
\end{problem}
Our approach is based on the connection between ERWs and a class of critical branching processes (BPs) with random migration (see Section \ref{two} for details). It is close in spirit to the (second) Ray-Knight theorem (see, for example, \cite{T96}, where similar ideas were used for other types of self-interacting RWs). This approach was proposed for ERWs in \cite{BS08a} and, since then, seemed to dominate the study of one-dimensional ERWs under the (IID) assumption. The main benefits gained from this connection are: 
\begin{itemize}
\item[(i)] BPs associated to ERWs are markovian, while the original processes do not enjoy this property;
\item[(ii)] after rescaling, these BPs are well approximated by squared Bessel processes of generalized dimension.\vspace*{-1mm} 
\end{itemize}
From these diffusion approximations one can immediately conjecture such important properties of BPs as survival versus extinction, the tail asymptotics of the extinction time and of the total progeny (conditioned on extinction where appropriate). Rigorous proofs of these conjectures are somewhat technical, but, in a nutshell, they are based on standard martingale techniques applied to gambler-ruin-like problems. 

Diffusion approximations for BPs associated to ERWs and the mentioned above martingale techniques were used in \cite{KM11} to study the tail behavior of regeneration times of transient ERWs, which led to theorems about limit laws for these processes. In the current work we extend some of the results and techniques of \cite{KM11} and, in addition, apply the Doob transform to treat BPs conditioned on extinction. The results for conditioned BPs are then readily translated into the proof of Theorem \ref{ex}.

While the majority of results in the BPs literature rely on generating functions approach, diffusion approximations of BPs is also a well-developed subject. Its history goes back to \cite{Fe51} (see \cite[Chapter 9]{EK86} for precise statements and additional references). But it seems that diffusion approximations for our kind of BPs are not available in the literature. Moreover, among a wealth of results (obtained by any approach) about conditioned BPs we could not find those which would cover our needs (but see the related work \cite{Mel83} and the references therein).

We would like to point out one more aspect of the relationship between ERWs and BPs. At first (see, for example, \cite{BS08a}, \cite{KZ08}) there was a tendency to use known results for BPs to infer results about ERWs. Gradually, as we mentioned above, the study of ERWs required additional results about BPs, not covered by the literature. In \cite{KM11} all BP results needed for ERWs were obtained directly. In this work we continue the trend. Theorem \ref{sun} gives asymptotics of the tails of extinction time and the total progeny of a class of critical BPs with random migration and geometric offspring distribution conditioned on extinction. We believe that this result might be of independent interest and that our methods are sufficiently robust to be applicable to more general critical BPs with random migration.

Let us now describe how the present article is organized. We close the introduction  with some notation. In the next section we recall how excursions of ERWs are related to certain BPs. Section~\ref{moon} deals with diffusion approximations of these BPs. In Section~\ref{sunset} we prove that BPs conditioned on extinction can be approximated by the diffusions from Section~\ref{moon} conditioned on hitting zero. In Section~\ref{star} we use these results to obtain tail asymptotics of the extinction time and of the total progeny of BPs conditioned on extinction. Short Section~\ref{main} translates the obtained asymptotics into the proof of Theorem~\ref{ex}.
In the Appendix we collect and extend as necessary several auxiliary results from the literature, which we quote throughout the paper and which do not depend on the results from Sections \ref{moon}--\ref{main}.

{\bf Notation.} 
For any $I\subseteq [0,\infty)$  and $f:I\to\R$ we let 
$\si^f_y:=\inf\left\{t\in I:  f(t)\le y\right\}$ and
$\tau^f_y:=\inf\left\{t\in I:  f(t)\ge y\right\}$ be
the entrance time of $f$ into $(-\infty,y]$ and $[y,\infty)$, respectively. (Here $\inf\emptyset:=\infty$.) 
If $Z$ is a process with $P[\si_0^Z<\infty]>0$ then we denote by $\overline Z$ any process which has the same distribution as $Z$ under $P[\ \cdot\, \mid \si_0^{Z}<\infty]$. 
Whenever $X$ is a Markov process starting at time 0 we indicate the starting point $X(0)=x$ in expressions like $P_x[X\in A]$ by the subscript $x$ to $P$. 
The space of real-valued c\`adl\`ag functions on $[0,\infty)$ is denoted by $D[0,\infty)$ and convergence in distribution by $\Rightarrow$.
\section{Excursions of  RWs and  branching processes}\label{two}
We recall a relationship between  nearest neighbor paths from 1 to 0, representing RW excursions to the right, and BPs.  Among the first descriptions of this relation is \cite[Section 6]{Har52}. We refer to \cite[Sections 3, 4]{KZ08} and \cite[Section 2.1]{Pet} for detailed explanations in the  context of ERW.
 
Assume that the nearest neighbor random walk $(X_n)_{n\ge 0}$ starts at $X_0=1$, set $U_0:=1$ and let for $k\ge 1$, 
\begin{equation}\label{uk}
U_k:=\#\{n\ge 1:\ n<T_0,\ X_{n-1}=k,\ X_{n}=k+1\}
\end{equation}
be the number of upcrossings from $k$ to $k+1$ by the walk before time $T_0$.
If we set $\Delta_0^{(k)}:=0$ then  
\[\Delta_m^{(k)}:=\inf\left\{n\ge 1: \Delta_{m-1}^{(k)}<n\le T_0,\ X_{n-1}=k, X_n=k-1\right\},\qquad k,m\ge 1,\]
is, if finite, the time of the completion of the $m$-th downcrossing from $k$ to $k-1$ prior to $T_0$.
We define
\[\zeta_{m}^{(k)}:=\#\left\{n\ge 1: \Delta_{m-1}^{(k)}<n<\Delta_m^{(k)},\ 
X_{n-1}=k,\ X_n=k+1\right\},\qquad k,m\ge 1,
\]
to be the number of upcrossing from $k$ to $k+1$  between the $(m-1)$-th and the $m$-th downcrossing from  $k$ to $k-1$ before $T_0$. 
Then $U_{k+1}$ can be represented in BP form as
\[
U_{k+1}=\sum_{m=1}^{U_k}\zeta_{m}^{(k+1)}.
\]
Here $\zeta_{m}^{(k+1)}$ can be interpreted as the number of children of the $m$-th individual in the $k$-generation.
The joint distribution of these numbers depends on the RW model under consideration.
In the case of ERW 
it may be 
quite complicated, especially in the case where $T_0=\infty$ with positive probability. Therefore, we study instead of $U$ a slightly different BP $V$, the so-called \textit{forward BP} described in the following statement.  
\begin{prop}\label{coup}{\rm \bf (Coupling of ERW and forward BP)} Assume we are given $M\in\N$ and an ERW $X=(X_n)_{n\ge 0}$ which satisfies {\rm (IID), (WEL)} and {\rm (BD$_\mathrm{M}$)}. Then one may assume without loss of generality that there are on the same probability space 
$\N_0$-valued random variables $\xi_m^{(k)},\ m,k\ge 1,$ which define
a Markov chain $V=(V_k)_{k\ge 0}$ by $V_0:=1$ and 
\begin{equation}\label{sj}
V_{k+1}:=\sum_{m=1}^{V_k}\xi_{m}^{(k+1)},\qquad k\ge 0,
\end{equation}
such that under $P_1$ the following holds:
\begin{align}
&\parbox[h]{12cm}{ 
The random quantities $(\xi_1^{(k)},\ldots,\xi_M^{(k)}), \xi_{m}^{(k)}\ (m>M, k\ge 1)$ are independent.}\label{ind}\\
&\parbox[h]{12cm}{The random vectors $(\xi_1^{(k)},\ldots,\xi_M^{(k)})\ (k\ge 1)$ are identically distributed, $\N_0^M$-valued, vanish with positive probability,  and have a finite fourth moment.}\label{a3}\\
\label{ddM}
&\sum_{m=1}^M(\xi_m^{(1)}-1)\quad\mbox{has expected value $\delta$.}\\
&\label{gg}\parbox[h]{12cm}{
The random variables $\xi_m^{(k)}\ (m>M, k\ge 1)$ are geometrically distributed with parameter $1/2$ and expected value 1.}\\
&U_k\le V_k \qquad\mbox{for all $k\ge 0$ and }\label{jim}\\ \label{knopf}
&U=V\qquad\mbox{on the event $\{\si_0^U<\infty\}\cup\{\si_0^V<\infty\}$},
\end{align}
where $U$ is defined by {\rm (\ref{uk})}.
\end{prop}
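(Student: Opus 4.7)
The plan is to realize $X$ and $V$ on a single probability space and read off every property from a shared array of Bernoulli variables. Enlarge the space carrying $\om\sim\PP$ by independent uniforms $(U_{k,i})_{k\in\Z,\,i\ge 1}$ and set $B(k,i):=\I\{U_{k,i}<\om(k,i)\}$; the ERW $X$ is then driven by $B$ in the usual way, i.e.\ at the $i$-th visit to $k$ the walk steps up iff $B(k,i)=1$. For the BP I define the rounds directly from $B(k,\cdot)$ at each $k\ge 1$: let $J_0^{(k)}:=0$, let $J_m^{(k)}$ denote the position of the $m$-th zero in $B(k,1),B(k,2),\ldots$, and put $\xi_m^{(k)}:=J_m^{(k)}-J_{m-1}^{(k)}-1$. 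Then (\ref{sj}) defines $V$, and Markovianness holds because the sequences $(B(k,\cdot))_{k}$ are independent across $k$ by (IID).

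For the distributional claims (\ref{ind}), (\ref{a3}), (\ref{gg}), the key point is that (BD$_\mathrm{M}$) forces $\om(k,i)=1/2$ for $i>M$, so $(B(k,i))_{i>M}$ is an i.i.d.\ fair Bernoulli sequence independent of $\om$. Since $J_M^{(k)}\ge M$ is a stopping time for the natural filtration of $(B(k,i))_i$, the strong Markov property applied to this i.i.d.\ tail shows that $(B(k,J_M^{(k)}+j))_{j\ge 1}$ is i.i.d.\ fair and independent of everything up to time $J_M^{(k)}$. Each $\xi_m^{(k)}$ with $m>M$ is measurable with respect to this post-$J_M^{(k)}$ tail, which yields (\ref{gg}) and its independence from $(\xi_1^{(k)},\ldots,\xi_M^{(k)})$; independence across $k$ is immediate from (IID). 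Identical distribution and the fourth moment bound in (\ref{a3}) follow by conditioning on $\om$: the conditional law of each $\xi_m^{(k)}$ has a geometric tail (once $M$ cookies are used, the remainder is fair), and $P[B(k,1)=\cdots=B(k,M)=0]=\EE[\prod_{i=1}^M(1-\om(0,i))]>0$ by (WEL) gives the vanishing claim.

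For (\ref{ddM}) I would apply Doob's optional stopping to the bounded-increment martingale
\[Y_n:=\sum_{i=1}^n\bigl((2B(1,i)-1)-(2\om(1,i)-1)\bigr)\]
at the stopping time $J_M^{(1)}$, whose finite expectation follows from the geometric tail of $J_M^{(1)}-M$. Then $E[Y_{J_M^{(1)}}]=0$ combined with the identity $\sum_{m=1}^M(\xi_m^{(1)}-1)=\sum_{i=1}^{J_M^{(1)}}(2B(1,i)-1)$ gives
\[E\Bigl[\sum_{m=1}^M(\xi_m^{(1)}-1)\Bigr]=E\Bigl[\sum_{i=1}^{J_M^{(1)}}(2\om(1,i)-1)\Bigr]=\EE\Bigl[\sum_{i=1}^M(2\om(1,i)-1)\Bigr]=\delta,\]
where the second equality uses $\om(1,i)=1/2$ for $i>M$.

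The coupling (\ref{jim}), (\ref{knopf}) is proved by induction on $k$, the base case $U_0=V_0=1$ being the convention. Suppose $U_j\le V_j$ for $j\le k$ with equality on $\{T_0<\infty\}$. The walk's successive visits to site $k+1$ consume the prefix $B(k+1,1),\ldots,B(k+1,\ell)$ with $\ell=U_{k+1}+D_{k+1}$; the zeros among these are exactly the $D_{k+1}$ downsteps and the ones are the $U_{k+1}$ upsteps. Since $D_{k+1}\le U_k\le V_k$ (with equality iff $T_0<\infty$), the walk completes at most $V_k$ rounds, and its 1-count $U_{k+1}$ is bounded by the 1-count of the first $V_k$ rounds, namely $V_{k+1}$. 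On $\{T_0<\infty\}$ the walk's last step from $k+1$ is necessarily downward (else it could not eventually reach $0$), so every round is complete and $U_{k+1}=V_{k+1}$. The main obstacle I anticipate is the other half of (\ref{knopf}): deducing $T_0<\infty$ from $\si_0^V<\infty$. The key observation is that on $\{V_{k^*}=0\}$ the first $V_{k^*-1}$ entries of $B(k^*,\cdot)$ all vanish; combined with $U_{k^*-1}\le V_{k^*-1}$ from the induction, no upcrossing from $k^*$ to $k^*+1$ is possible, because a first such upcrossing would force the walk to visit $k^*$ more than $V_{k^*-1}$ times before ever visiting $k^*+1$, which would in turn demand more than $V_{k^*-1}$ upcrossings from $k^*-1$ to $k^*$, contradicting the inductive bound. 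Thus $X$ stays in $(-\infty,k^*]$, and since the cookies at every site become fair after $M$ uses by (BD$_\mathrm{M}$), a standard recurrence argument for the simple symmetric walk forces $T_0<\infty$, reducing us to the first case.
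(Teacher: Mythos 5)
Your proof is correct and follows exactly the coin-toss construction that the paper cites (from KZ08, Section~4) as its proof: realize the walk from an array of Bernoulli coins $B(k,i)$ and read $\xi_m^{(k)}$ off as the lengths of the runs of ones between successive zeros of $B(k,\cdot)$. You have simply supplied the routine verifications that the paper leaves to the reference — optional stopping for (\ref{ddM}), the strong Markov argument past $J_M^{(k)}$ for (\ref{ind}) and (\ref{gg}), the round-counting induction for (\ref{jim})–(\ref{knopf}), and the bounded-region recurrence argument deducing $T_0<\infty$ from $\si_0^V<\infty$.
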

Proposition \ref{coup} follows from the so-called  coin-toss construction of ERW described in \cite[Section 4]{KZ08}, see also \cite[Section 2]{Pet}.
Note that the above conditions (\ref{sj})--(\ref{gg}) do not completely characterize the distribution of $V$. For this 
statement (\ref{a3}) would have to be made stronger. However, we refrain from doing so, since the conditions (\ref{sj})--(\ref{gg}) are the only ones we need for our proofs to work. (The moment condition in (\ref{a3}) is inherited from the proof of \cite[Lemma 5.2]{KM11} and could be relaxed.) Indeed, we only make the following assumptions on $V$.\vspace*{2mm}

\noindent{\bf Assumptions on the offspring $\xi$ and the BP $V$.} For the remainder of the paper we assume that the Markov chain $V$ is defined by (\ref{sj}), where the offspring variables $\xi_m^{(k)},\ m,k\ge 1,$ satisfy (\ref{ind})--(\ref{gg}).

\begin{remark}\label{s1}{\rm {\bf (Cookies of strength 1 and BPs with immigration)}
In \cite[p.\ 1960]{KZ08} we describe how the above process $V$ can be viewed as a BP with migration, i.e.\ emigration and immigration.
If (IID) and (WEL) hold, but not necessarily (BD$_\mathrm{M}$), and if there is $\PP$-a.s.\ some random $K\in\N\cup\{\infty\}$ such that $\om(0,i)=1$ for all $1\le i<K$ and $\om(0,i)=1/2$ for all $i\ge K$ then one can couple the ERW in a way similar to the one described in Proposition \ref{coup} to a BP with immigration without emigration, see e.g.\ \cite[Section 3]{Bau13}. This kind of BP seems to be more tractable than BPs with immigration and emigration and several results are available in the BP literature which have direct implications for such ERWs. For example,  the recurrence/transience phase transition in $\delta$ can be obtained from \cite[Th.\ 1]{Pak71} or \cite[Th.\ 3]{Zub72}. For other examples see  Remarks \ref{simi} and \ref{we}.
}
\end{remark}

\begin{remark}{\rm {\bf (Other uses of the forward BP)}
The above mentioned relationship between excursions of RWs and BPs has been mainly used so far to translate result about BPs into results about RWs. The RW is then called the contour process associated to the BP.
We list a few examples.  Solomon's 
recurrence/transience theorem \cite[second part of Th.\ (1.7)]{So75} for RWRE follows from results by
Smith and Wilkinson \cite{SW69} about the extinction of Galton-Watson processes in random environment, see also \cite[Ch.\ VI.5]{AN72}. In \cite[p.\ 268]{Afa99} this relationship is shown to imply that for recurrent RWRE $P_1[T_0>n]\sim c/\log n$ as $n\to\infty$ for some constant $0<c<\infty$. In \cite[Th.\ 1]{KZ08} we used this correspondence and results from \cite{FYK90} for a proof of the recurrence/transience result about ERW mentioned above, see also Corollary \ref{rc2} below. 
In \cite{Bau13} and \cite{Bau} this connection is used to determine how many cookies (of maximal value $\om(x,i)=1$) are needed to change the recurrence/transience behavior of RWRE. And in \cite[Th.\ 1.7]{Pet} strict monotonicity with respect to the environment of the return probability of a transient ERW is shown to be inherited from monotonicity properties of this BP. 
}\end{remark}
\begin{remark}\label{d1}{\rm {\bf(Backward BP)} There is yet another family of branching processes associated to random walk paths, sometimes called the \textit{backward BPs}, see
\cite{BS08a}, \cite[Section 6]{KZ08}, \cite{KM11}, \cite[Th.\ 5.2]{KZ13}, and \cite[Section 2.2]{Pet}. 

We notice that all results of \cite{KM11} about backward BPs have the corresponding analogs for forward BPs, which are obtained by replacing $\delta$ (which is assumed to be positive in \cite{KM11}) with $1-\delta<1$ throughout. The proofs carry over essentially word for word without any additional changes. In what follows we simply quote such results. All additional results about forward BPs, in particular, for $\delta>1$, are supplied with detailed proofs or comments as appropriate. 
}
\end{remark}

\section{Diffusion approximation of unconditioned branching processes}\label{moon}
The main result of this section is Theorem \ref{uncond} about diffusion approximations of the process $V$. It extends \cite[Lemma~3.1]{KM11}, which only considered the process $V$ stopped at $\sigma^V_{\epsilon n}$ with $\epsilon>0$.

The limiting processes are defined in terms of solutions of the stochastic differential equation (SDE)
\begin{equation}\label{bes}
dY(t)=\delta\ dt+\sqrt{2Y^+(t)}\ dB(t),
\end{equation}
where $(B(t))_{t\ge 0}$ is a one-dimensional Brownian motion. For discussions of this particular SDE see e.g.\ \cite[Ch.\ V.48]{RW00} and \cite[Example IV-8.2]{IW89}. 
By \cite[Th.\  3.10, p.\ 299]{EK86} the SDE (\ref{bes})  has a weak solution   $Y=(Y(t))_{t\ge 0}$ for any initial distribution $\mu$ on $\R$ and any $\delta\in\R$. Due to the Yamada-Watanabe uniqueness theorem \cite[Th.\  1]{YW71} (see also \cite[Th.\  40.1]{RW00}) pathwise uniqueness holds for (\ref{bes}). By \cite[Prop.\  1]{YW71}
(see also \cite[Th.\  3.6, p.\ 296]{EK86}) distributional uniqueness holds as well. 
For $\delta\ge 0$, $2Y$ is a squared Bessel processes of dimension $2\delta$, see e.g.\ \cite[Chapter XI, \S 1]{RY99}. For $\delta<0$, $2Y$ coincides with squared Bessel processes of negative dimension (see \cite[Section 3]{GY03}) up to time $\si_0^Y$ and continues degenerately after time $\si_0^Y$ since by the strong Markov property a.s.\ 
\begin{equation}\label{dt}
Y(\si_0^Y+t)=\delta t\quad\mbox{for $t\ge 0, \delta<0$.}
\end{equation}

In order to obtain these diffusion approximations
we first introduce a modification $\widetilde V$ of the original process $V$ and state in Proposition \ref{500} a functional limit theorem for this process. The advantage of this process $\widetilde V$ is that it admits some nice martingales.
Note that (\ref{sj}) can be rewritten as
\[V_{k+1}=V_k+\sum_{m=1}^{V_k}(\xi_{m}^{(k+1)}-1).\]
This recursion is modified below in (\ref{vt}).
\begin{lemma}\label{mm} Let $x\in\Z$,
$\widetilde V_0:=x$ and let $\xi$ satisfy {\rm (\ref{ind})--(\ref{gg})}. Set
$v:={\rm Var}\left[\sum_{m=1}^M\xi_m^{(1)}\right].$
For  $k\in\N_0$ define
\begin{eqnarray}\label{vt}
\widetilde V_{k+1}&:=&\widetilde V_k+\sum_{m=1}^{\widetilde V_k\vee M}(\xi_m^{(k+1)}-1),\\
M_k&:=&\widetilde V_k-k\delta,\quad\mbox{and}\label{Mk}\\
A_k&:=&vk+2\sum_{m=0}^{k-1}(\widetilde V_m-M)^+.\label{Ak}
\end{eqnarray}
Then $(M_k)_{k\ge 0}$ and $(M_k^2-A_k)_{k\ge 0}$ are martingales with respect to the filtration $(\F_k)_{k\ge }$, where $\F_k$ is generated by $\xi_m^{(i)}, m\ge 1, 1\le i\le k$.
\end{lemma}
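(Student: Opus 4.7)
The plan is a straightforward martingale verification: for each $k$, decompose the one-step increment
$\widetilde V_{k+1}-\widetilde V_k=\sum_{m=1}^{\widetilde V_k\vee M}(\xi_m^{(k+1)}-1)$ according to whether $m\le M$ or $m>M$, and exploit the two different regimes encoded in (\ref{a3})--(\ref{gg}). The cutoff $\widetilde V_k\vee M$ guarantees that the first block $m=1,\dots,M$ is always present, which is exactly what makes the drift constant in $k$.

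Before computing conditional means I would first check integrability by induction on $k$. Since by (\ref{a3}) the vector $(\xi_1^{(1)},\dots,\xi_M^{(1)})$ has finite fourth moment and by (\ref{gg}) the same is true for every $\xi_m^{(k)}$ with $m>M$, a routine induction using the recursion (\ref{vt}) yields $\EE[\widetilde V_k^2]<\infty$ for every $k$, and hence $\EE[M_k^2]+\EE[A_k]<\infty$. Then I would observe that, by (\ref{ind}), the collection $\{\xi_m^{(k+1)}\}_{m\ge1}$ is independent of $\F_k$, while $\widetilde V_k$ and thus $N_k:=\widetilde V_k\vee M$ are $\F_k$-measurable.

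For the first martingale I split
\begin{equation*}
M_{k+1}-M_k=\Bigl(\sum_{m=1}^M(\xi_m^{(k+1)}-1)-\delta\Bigr)+\sum_{m=M+1}^{N_k}(\xi_m^{(k+1)}-1).
\end{equation*}
Conditioning on $\F_k$, the first bracket has mean zero by (\ref{ddM}) (with the identical-distribution statement from (\ref{a3})), and each summand of the second term has mean zero by (\ref{gg}); the upper limit $N_k-M=(\widetilde V_k-M)^+$ is $\F_k$-measurable so the second sum conditionally averages to zero as well. This shows $\EE[M_{k+1}-M_k\mid\F_k]=0$.

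For the second martingale I use the same decomposition together with conditional independence of the two blocks (again by (\ref{ind})). Writing $X:=\sum_{m=1}^M(\xi_m^{(k+1)}-1)-\delta$ and $Y:=\sum_{m=M+1}^{N_k}(\xi_m^{(k+1)}-1)$, one has $\mathrm{Var}(X)=v$ by definition of $v$ in the statement, and $\mathrm{Var}(Y\mid\F_k)=2(\widetilde V_k-M)^+$, since a $\mathrm{Geom}(1/2)$ variable with mean $1$ has variance $2$ and the number of summands is $\F_k$-measurable. Therefore
\begin{equation*}
\EE[(M_{k+1}-M_k)^2\mid\F_k]=v+2(\widetilde V_k-M)^+=A_{k+1}-A_k,
\end{equation*}
and combining this with $\EE[M_{k+1}-M_k\mid\F_k]=0$ (which kills the cross term in expanding $M_{k+1}^2$) gives $\EE[M_{k+1}^2-M_k^2\mid\F_k]=A_{k+1}-A_k$, i.e.\ $(M_k^2-A_k)_{k\ge0}$ is a martingale. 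The only genuinely non-routine point is the bookkeeping of which $\xi$'s carry the drift $\delta$ (only those with $m\le M$) and which contribute to the variance coefficient $2$ in $A_k$ (only those with $m>M$); the $\vee M$ in (\ref{vt}) is precisely the device that separates the two cleanly.
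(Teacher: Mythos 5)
Your proof is correct and follows essentially the same route as the paper's: condition on $\F_k$, decompose the increment into the $m\le M$ block (which carries the drift $\delta$ and variance $v$) and the $m>M$ geometric block (each term with mean $0$ and variance $2$), and use the independence in (\ref{ind}) together with the $\F_k$-measurability of $\widetilde V_k\vee M$ to compute the conditional mean and variance. The only cosmetic addition is your explicit induction for integrability, which the paper leaves implicit; otherwise the arguments coincide.
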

\begin{proof} By (\ref{a3})--(\ref{gg}), 
\begin{equation}\label{dd}
E\left[\sum_{m=1}^i(\xi_m^{(k+1)}-1)\right]=\delta\quad\mbox{for all $i\ge M$  and $k\ge 0$.}
\end{equation}
This implies the first statement.
To find the Doob decomposition of the submartingale $(M_k^2)_{k\ge 0}$ we compute
\begin{eqnarray*}
\lefteqn{E[M_{k+1}^2-M_k^2\mid\F_{k}]=E[(M_{k+1}-M_k+M_k)^2-M_k^2\mid\F_{k}]}\\
&=&E[(M_{k+1}-M_k)^2\mid\F_{k}]+2M_k E[M_{k+1}-M_k\mid\F_{k}]\\
& \stackrel{(\ref{Mk})}{=}& E[(\widetilde V_{k+1}-\widetilde V_k-\delta)^2\mid\F_k]\\
&\stackrel{(\ref{vt})}{=}&E\left[\left(\left(\sum_{m=1}^{\widetilde V_k\vee M}\left(\xi_m^{(k+1)}-1\right)\right)-\delta\right)^2\Bigg|\ \F_k\right]\ \stackrel{(\ref{dd})}{=}\ 
{\rm Var}\left[\sum_{m=1}^{\widetilde V_k\vee M}\left(\xi_m^{(k+1)}-1\right)\Bigg|\ \F_k\right]
\end{eqnarray*}
\begin{eqnarray*}
&\stackrel{(\ref{ind})}{=}&{\rm Var}\left[\sum_{m=1}^{M}\left(\xi_m^{(k+1)}-1\right)\right]+\sum_{m=M+1}^{\widetilde V_k\vee M}
{\rm Var}\left[\xi_{m}^{(k+1)}-1\right]\ =\
v+2\left(\widetilde V_k-M\right)^+.
\end{eqnarray*}
Recalling (\ref{Ak}) we obtain the second claim.
\end{proof}
\begin{prop}
\label{500}
Let $(x_n)_{n\ge 1}$ be a sequence of positive numbers which converges to $x>0$, let $\delta\in\R$ and assume that $\xi$ satisfies {\rm (\ref{ind})--(\ref{gg})}.
For each $n\in\N$ define $\widetilde V_n=(\widetilde V_{n,k})_{k\ge 0}$ and 
$\widetilde Y_n=(\widetilde Y_n(t))_{t\ge 0}$ by setting 
$\widetilde V_{n,0}:=\lfloor nx_n\rfloor$ and 
\begin{eqnarray*}
\widetilde V_{n,k+1}&:=&\widetilde V_{n,k}+\sum_{m=1}^{\widetilde V_{n,k}\vee M}(\xi_m^{(k+1)}-1),
\qquad \mbox{for $k\ge 0$ and} \\
\widetilde Y_n(t)&:=&\frac{\widetilde V_{n,\lfloor nt\rfloor}}{n}
\qquad \mbox{for $t\in[0,\infty)$.}
\end{eqnarray*}
Let $Y=(Y(t))_{t\ge 0}$ solve {\rm (\ref{bes})} with $Y(0)=x$. Then 
$\widetilde Y_n\stackrel{J_1}{\Longrightarrow} Y$ as $n\to\infty$.
\end{prop}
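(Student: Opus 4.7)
The plan is to apply a martingale functional central limit theorem to $\widetilde Y_n$ using the decomposition supplied by Lemma~\ref{mm}. Writing $\widetilde V_{n,k}=\widetilde V_{n,0}+k\delta+M_{n,k}$ with $M_{n,k}$ the martingale defined in (\ref{Mk}), its predictable quadratic variation is $A_{n,k}=vk+2\sum_{j=0}^{k-1}(\widetilde V_{n,j}-M)^+$ by (\ref{Ak}). After rescaling space by $1/n$ and time by $1/n$, the drift contributes $\lfloor nt\rfloor\delta/n\to\delta t$ uniformly on compact intervals, matching the drift of (\ref{bes}), and the scaled predictable bracket satisfies
\[
n^{-2}A_{n,\lfloor nt\rfloor}=\frac{vt}{n}+\frac{2}{n^2}\sum_{j=0}^{\lfloor nt\rfloor-1}(\widetilde V_{n,j}-M)^+\ \approx\ 2\int_0^t\widetilde Y_n(s)^+\,ds,
\]
which is the correct predictable bracket for the stochastic integral $\int_0^t\sqrt{2Y^+(s)}\,dB(s)$ appearing in (\ref{bes}).

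Next I would invoke a martingale FCLT such as \cite[Ch.\ 7, Th.\ 1.4]{EK86} (or Rebolledo's theorem) to conclude that the pair $(\widetilde Y_n,n^{-1}M_{n,\lfloor n\cdot\rfloor})$ is $J_1$-tight in $D[0,\infty)^2$ and that every subsequential limit has the form $(Y_\infty,N)$ with $Y_\infty(t)=x+\delta t+N(t)$, where $N$ is a continuous martingale with $\langle N\rangle_t=2\int_0^t Y_\infty(s)^+\,ds$. The jump-smallness hypothesis of the FCLT reduces to $n^{-1}\sup_{k\le nT}|\widetilde V_{n,k+1}-\widetilde V_{n,k}|\to 0$ in probability; I would derive this from the finite fourth moment in (\ref{a3}), the geometric tails in (\ref{gg}), and an a priori upper bound on $\sup_{k\le nT}\widetilde Y_n(k/n)$ obtained by applying Doob's $L^2$-inequality to $M_{n,\cdot}$. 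A standard time-change or representation argument then realises $N$ as $\int_0^\cdot\sqrt{2Y_\infty^+(s)}\,dB(s)$ on a possibly enlarged probability space, so $Y_\infty$ is a weak solution of (\ref{bes}).

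The main technical obstacle is that $y\mapsto\sqrt{2y^+}$ is only H\"older-$1/2$ at the origin and, for $\delta<0$, the prelimit $\widetilde V_n$ can become small or even take the values $0,-1,-2,\dots$, whereas $Y$ continues deterministically as $\delta t$ after $\si_0^Y$ in view of (\ref{dt}). This is precisely why (\ref{bes}) uses $Y^+$ inside the square root and why Lemma~\ref{mm} replaces $\widetilde V_k$ by $\widetilde V_k\vee M$ in the driving sum: the modification keeps (\ref{dd}) valid down to $\widetilde V_k=0$ at the cost of at most an $O(M/n)$ perturbation of $\widetilde Y_n$, which is $J_1$-negligible on compact time intervals. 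The Yamada--Watanabe distributional uniqueness for (\ref{bes}), recalled just before the proposition via \cite{YW71}, then forces every subsequential limit to equal in law the unique weak solution $Y$ with $Y(0)=x$, whence the full sequence $\widetilde Y_n$ converges to $Y$ in $J_1$; the convergence of initial conditions $\widetilde Y_n(0)=\lfloor nx_n\rfloor/n\to x$ is built into the hypothesis $x_n\to x>0$.
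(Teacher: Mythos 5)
Your proposal is correct and follows essentially the same route as the paper: the paper likewise feeds the martingale/bracket decomposition of Lemma~\ref{mm} into the Ethier--Kurtz machinery, though it invokes the packaged diffusion-approximation theorem for Markov chains \cite[Th.\ 4.1, p.\ 354]{EK86} rather than the raw martingale FCLT, and concludes via well-posedness of the martingale problem (Yamada--Watanabe) exactly as you do. One small technical remark: the paper handles the jump-smallness condition by exploiting the built-in localization at the stopping time $\tau_{n,r}=\inf\{t:|\widetilde Y_n(t)|\vee|\widetilde Y_n(t-)|\ge r\}$ together with the geometric large-deviation bound of Lemma~\ref{gld}, which cleanly avoids the a priori $\sup$-bound you propose to obtain from Doob's $L^2$-inequality; the latter would require a Gronwall-type bootstrap since $A_{n,k}$ itself involves $\widetilde V_n$, so while not fatally circular it is slightly more delicate than the localization the cited theorem already provides.
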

\begin{proof}
We are going to apply \cite[Th.\  4.1, p.\ 354]{EK86}.
To check the assumptions of this theorem we first let  $\mu$ be a  distribution on $\R$ and consider the $C_\R[0,\infty)$ martingale problem for $(A,\mu)$ with $A=\{(f,Gf): f\in C_c^\infty(\R)\}$, where $Gf:=(a/2)f''+\delta f'$ and $a(x):=2x^+$. This martingale problem  is well posed due to \cite[Cor.\  3.4, p.\ 295]{EK86} and our discussion above after  (\ref{bes}) concerning existence and distributional uniqueness of solutions of (\ref{bes}).

Now define for each $n\in\N$, $(M_{n,k})_{k\ge 0}$ and $(A_{n,k})_{k\ge 0}$ in terms of $\widetilde V_{n,k}$ as in (\ref{Mk}) and (\ref{Ak}), respectively.  For $t\in[0,\infty)$ set
\[
M_n(t):=\frac{M_{n,\lfloor tn\rfloor}}n,\quad
A_n(t):=\frac{A_{n,\lfloor tn\rfloor}}{n^2},\quad
B_n(t):=\frac{\lfloor tn\rfloor \delta}n.
\]
We are now going to check conditions (4.1)-(4.7) of \cite[Th.\ 4.1,
p.\ 354]{EK86}.  By Lemma \ref{mm}, $M_n-B_n$ and $M^2_n-A_n$ are
martingales for all $n\in\N$, i.e.\ conditions (4.1) and (4.2) are
satisfied.  To verify the remaining conditions (4.3)--(4.7) we fix  $r,T\in(0,\infty)$ and set $\tau_{n,r}:=\inf\{t>0:
|\widetilde Y_n(t)|\vee |\widetilde Y_n(t-)|\ge r\}$. To 
check condition (4.3), we have to show that 
\begin{equation}\label{fig}
\lim_{n\to\infty}
E\left[\sup_{t\le T\wedge\tau_{n,r}}|\widetilde Y_n(t)-\widetilde
  Y_n(t-)|^2\right]=0.
\end{equation}
This is a consequence of (\ref{gg}) and the fact that
the geometric distribution has exponential tails. More precisely, 
\begin{align*}
  E\bigg[&\sup_{t\le T\wedge\tau_{n,r}}|\widetilde Y_n(t)-\widetilde
    Y_n(t-)|^2\bigg]=\frac{1}{n^2}\,E\bigg[\max_{1\le k\le (Tn)\wedge
    \tau^{\widetilde V}_{\lfloor
      rn\rfloor}}\Big|\sum_{m=1}^{\widetilde V_{n,k-1}\vee
    M}(\xi^{(k)}_m-1)\Big|^2\bigg]\\
&\le \frac{2}{n^2}\,
  E\bigg[\max_{1\le k\le Tn}
  \Big|\sum_{m=1}^M(\xi^{(k)}_m-1)\Big|^2+\max_{1\le k\le (Tn)\wedge
    \tau^{\widetilde V}_{\lfloor
      rn\rfloor}}\Big|\sum_{m=M+1}^{\widetilde V_{n,k-1}}(\xi^{(k)}_m-1)\Big|^2\bigg]\\
&\le \frac{2T}{n}\,
  E\bigg[\Big|\sum_{m=1}^M(\xi^{(0)}_m-1)\Big|^2\bigg]+\frac{2}{n^2}\,
  E\bigg[\max_{1\le
    k\le Tn}\max_{M+1\le j\le rn}\Big|\sum_{m=M+1}^j(\xi^{(k)}_m-1)\Big|^2\bigg].
\end{align*}
The first term in the last line goes to $0$ as $n\to\infty$ and the
second term  is equal to 
\begin{eqnarray*}
  &&\frac{2}{n^2}\sum_{y\ge 0} P\bigg[\max_{1\le k\le
      Tn}\,\max_{M+1\le j\le rn}\Big|\sum_{m=M+1}^j(
      \xi_m^{(k)}-1)\Big|^2>y\bigg]\\ 
&\le&\frac{2r^{3/2}}{n^{1/2}}+\frac{2}{n^2}
  \sum_{y>(rn)^{3/2}} P\bigg[\max_{1\le k\le
      Tn}\,\max_{M+1\le j\le rn}\Big|\sum_{m=M+1}^j(
      \xi_m^{(k)}-1)\Big|^2>y\bigg].
\end{eqnarray*}
The first term in the last line vanishes as $n\to\infty$. 
Applying the union bound and
Lemma~\ref{gld} to the last probability we find that the second term
does not exceed
\begin{equation*}
  4rT\sum_{y>(rn)^{3/2}} e^{-y/(6(rn\vee\sqrt{y}))} \le 4rT
  \sum_{y>(rn)^{3/2}} e^{-y^{1/3}/6}\to
  0\quad\text{as }\ n\to\infty.
\end{equation*}
This finishes the proof of (\ref{fig}).
Conditions (4.4) and (4.6) of \cite[Th.\ 4.1,
p.\ 354]{EK86} (with $b\equiv
\delta$) hold obviously. Condition (4.5) is fulfilled, since
\[\sup_{t\le T\wedge\tau_{n,r}}|A_n(t)-A_n(t-)|\le \frac{v+2(nr+M)}{n^2}. \]
For (4.7) we consider for all $t\le T\wedge\tau_{n,r}$,
\begin{eqnarray*}
  \lefteqn{\left|A_n(t)-2\int_0^t \widetilde Y^+_n(s)\ ds\right|}\\
  &=&\left|\frac{v\lfloor tn\rfloor}{n^2}
    +\frac 2{n^2}\sum_{m=0}^{\lfloor tn\rfloor-1}(\widetilde
    V_{n,m}-M)^+-\frac 2n\int_0
    ^{\frac{\lfloor tn\rfloor}{n}} \widetilde V^+_{n,\lfloor
      sn\rfloor}\ 
    ds-2\int_{\frac{\lfloor tn\rfloor}n}^t Y^+_n(s)\ ds\right|\\
  &\le&\frac{vt}n+\frac 2{n^2}\sum_{m=0}^{\lfloor
    tn\rfloor-1} \left|(\widetilde V_{n,m}-M)^+-\widetilde
    V_{n,m}^+\right|+ \frac 2n \sup_{s<t}Y^+_n(s)\ \le\
  \frac{(v+2M)T+ 2r}n,
\end{eqnarray*}
which does not depend on $t$ any more and converges to 0 as
$n\to\infty$. Thus, (4.7) holds as well.  The theorem
follows now from \cite[Th.\ 4.1, p.\ 354]{EK86}.
\end{proof}
To be able to apply the continuous mapping theorem to Proposition \ref{500} we need the following statement.  Define for every $f\in D[0,\infty)$ and $y\in\R$ by
\[\varphi_y(f):=f(\cdot\wedge \si_y^f)\]
the function $f$ stopped after entering $(-\infty,y]$.
\begin{lemma}\label{cont}
Let $\delta\in\R$, $0<\eps<x<\infty$ and let $\psi$ be any of the following three mappings defined on $D[0,\infty)$:
\[f\mapsto\si_\eps^f\in [0,\infty],\quad
 f\mapsto\varphi_\eps(f)
\in D[0,\infty),\quad
f\mapsto\int_0^{\si_\eps^f}f^+(s)\ ds\in[0,\infty].
\]
Denote by
${\rm Cont}(\psi):=\left\{f\in D[0,\infty):  \psi\mbox{ is continuous at $f$}\right\}$ the set of continuity points of $\psi$.
Then the solution $Y$ of {\rm (\ref{bes})} 
satisfies $P_x[Y\in {\rm Cont}(\psi)]=1$.
\end{lemma}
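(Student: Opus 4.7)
The plan is to identify a measurable subset $G\subseteq D[0,\infty)$ of ``good'' paths at which all three functionals are simultaneously continuous, and then to show that $P_x[Y\in G]=1$. I would let $G$ consist of all continuous $f\in D[0,\infty)$ with $f(0)>\eps$ such that either \textbf{(I)} $\si_\eps^f<\infty$ and $f$ crosses $\eps$ strictly, meaning $\inf_{s\in[\si_\eps^f,\si_\eps^f+\eta]}f(s)<\eps$ for every $\eta>0$, or \textbf{(II)} $\si_\eps^f=\infty$ and $\lim_{t\to\infty}f(t)=+\infty$.

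First I would verify continuity of the three maps on $G$. Fix $f\in G$ and $f_n\to f$ in $J_1$; since $f$ is continuous, $J_1$-convergence upgrades to uniform convergence on compact sets, from which lower semi-continuity $\liminf\si_\eps^{f_n}\ge\si_\eps^f$ is automatic. In case (I), the strict crossing supplies, for any $\eta>0$, some $s<\si_\eps^f+\eta$ with $f(s)<\eps$; uniform convergence then gives $f_n(s)<\eps$ eventually, hence $\si_\eps^{f_n}\le s<\si_\eps^f+\eta$, which yields upper semi-continuity. In case (II), uniform convergence on each $[0,T]$ together with $f\to\infty$ forces $\si_\eps^{f_n}>T$ eventually, so $\si_\eps^{f_n}\to\infty$. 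Continuity of $\varphi_\eps$ follows at once from uniform convergence on compacts combined with $\si_\eps^{f_n}\to\si_\eps^f$. For the integral functional: in case (I), $f$ is bounded on $[0,\si_\eps^f+1]$ by continuity, hence the $f_n$ are uniformly bounded on $[0,\si_\eps^{f_n}]$ for large $n$, and dominated convergence gives the required limit; in case (II), both the approximating and limiting integrals equal $+\infty$ in $[0,\infty]$, since $\int_0^T f_n^+\,ds\to\int_0^T f^+\,ds$ for every $T$ and this can be made arbitrarily large.

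Next I would verify $P_x[Y\in G]=1$. Paths of $Y$ are a.s.\ continuous by (\ref{bes}) and pathwise uniqueness. On $\{\si_\eps^Y<\infty\}$ I would apply the strong Markov property at $\si_\eps^Y$ to reduce to the case $Y(0)=\eps$. Since $\sqrt{2\eps}>0$, the martingale part $Y(t)-\eps-\delta t$ admits the Dambis--Dubins--Schwarz representation $W(\int_0^t 2Y^+(s)\,ds)$ for some Brownian motion $W$, with clock strictly increasing near $0$, and therefore inherits the Brownian oscillation property $\inf_{s\le\eta}[Y(s)-\eps-\delta s]<0$ a.s.; for small enough $s$ the linear drift $\delta s$ is dominated, which gives the strict crossing. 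The event $\{\si_\eps^Y=\infty\}$ has positive probability only for $\delta>1$, because for $\delta\le 0$ the degenerate continuation (\ref{dt}) drives $Y$ below $\eps$ in finite time, while for $0<\delta\le 1$ the process $2Y$ is a squared Bessel process of dimension $\le 2$ and is recurrent (resp.\ neighbourhood recurrent at dimension $2$) on $(0,\infty)$. On $\{\si_\eps^Y=\infty\}$ with $\delta>1$, $2Y$ is a transient squared Bessel process of dimension $2\delta>2$, so $Y(t)\to+\infty$ a.s., placing the path in case (II).

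The main obstacle is the strict-crossing property at the interior level $\eps>0$; once this local non-degeneracy is secured, everything else reduces to standard Skorokhod-topology arguments. The time-change route above is clean, but one could alternatively compare $Y$ near $\eps$ with a Brownian motion with drift via an It\^o/Tanaka argument showing that the local time of $Y$ at $\eps$ grows immediately after $\si_\eps^Y$. With the strict crossing in hand, the three continuity statements on $G$ established in the second paragraph yield Lemma \ref{cont}.
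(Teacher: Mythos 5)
Your proof is correct and follows the same overall strategy as the paper: identify a set of ``good'' paths on which all three maps are continuous, then show the diffusion a.s.\ produces such paths. The paper's good set $F$ consists of continuous $f$ that have no local minimum at $\si_\eps^f$ (when finite), which for continuous $f$ starting above $\eps$ is precisely your strict-crossing condition (I). Where you differ is in the ingredients used: you prove continuity of $f\mapsto\si_\eps^f$ and $f\mapsto\varphi_\eps(f)$ from scratch via uniform-on-compacts arguments, whereas the paper cites \cite[Ch.\ VI, Prop.\ 2.11, 2.12]{JS87}; and you establish the a.s.\ strict crossing by a Dambis--Dubins--Schwarz time-change of the martingale part of $Y$ (using that the clock has positive derivative $2\eps$ at $\si_\eps^Y$ and the Brownian oscillation dominates the linear drift), whereas the paper cites \cite[Lemma (46.1)(i)]{RW00}. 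Both routes are valid; yours is more self-contained, theirs is shorter. One small remark: your condition (II), $\si_\eps^f=\infty$ and $f(t)\to\infty$, is stronger than necessary. Since $f$ is continuous and $f>\eps$ on $[0,\infty)$, one already has $\inf_{[0,T]}f>\eps$ for every finite $T$, which with uniform convergence on compacts forces $\si_\eps^{f_n}\to\infty$, and $\int_0^T f^+\ge\eps T\to\infty$; the transience $f(t)\to\infty$ (which indeed holds a.s.\ when $\delta>1$, the only case where $\si_\eps^Y=\infty$ has positive probability) is not needed. This is exactly how the paper handles case (II), without any appeal to transience.
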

\begin{proof} For $0<\eps<x<\infty$ let
\begin{eqnarray*}
F&:=&\left\{f\in C[0,\infty)\ \Big|\ f(0)=x,\quad \si_\eps^f<\infty\Rightarrow \mbox{$f$ has no local minimum at $\si_\eps^f$}\right\}.
\end{eqnarray*}
Then under the conditions of the lemma
$P_x\left[Y\in F\right]=1.$
Indeed, it follows from the strong Markov property and \cite[Lemma (46.1) (i)]{RW00} that $Y$ a.s.\ does not have a local minimum at $\si_\eps^Y$. 

Consequently, it suffices to show that  $F\subseteq {\rm Cont}(\psi)$. For $\psi=\si_\eps^\cdot$ and $\psi=\varphi_\eps$ this follows from 
\cite[Ch.\ VI, Prop.\ 2.11]{JS87} and \cite[Ch.\ VI, Prop.\ 2.12]{JS87}, respectively. (In the notation of \cite[Ch.\ VI, 2.9]{JS87}, $\si_\eps^f=S_a(\al)$ with $\al:=e^{-f}$ and $a:=e^{-\eps}$. Note that $f\mapsto e^{-f}$ is continuous w.r.t.\ the $J_1$-topology.)

For 
$\psi(f)=\int_0^{\si_\eps^f}f^+(s)\ ds$, we assume that $D[0,\infty)\ni f_n\stackrel{J_1}{\longrightarrow}f\in F$. We need to show that  $\psi(f_n)\to\psi(f)$. Since $\si_\eps^\cdot$ is continuous, as shown above, $\si_\eps^{f_n}\to\si_\eps^f$. If $\si_\eps^f<\infty$ then $\si_\eps^{f_n}<\si_\eps^f+1=:T$ for $n$ large, and hence 
$\left|\psi(f_n)-\psi(f)\right|\leq T\sup_{t\in[0,T]}|f_n(t)-f(t)|,$
which converges to 0 as $n\to\infty$, see e.g.\ \cite[Ch.\ VI, Prop.\ 1.17b]{JS87}. If $\si_\eps^f=\infty$, then for any $T<\infty$, $\si_\eps^{f_n}\ge T$ for $n$ large and thus
\[\psi(f_n)\ge \int_0^Tf_n^+(s)\ ds\begin{array}[t]{c}\longrightarrow\vspace*{-2mm}\\{\scriptstyle n\to\infty}\end{array}\int_0^Tf^+(s)\ ds\begin{array}[t]{c}\longrightarrow\vspace*{-2mm}\\{\scriptstyle T\to\infty}\end{array}\int_0^\infty f^+(s)\ ds=\infty\]
since $f(s)>\eps$ for all $s\ge 0$.
\end{proof}
\begin{theorem}\label{uncond} {\bf (Convergence of unconditioned processes)} Let $(x_n)_{n\ge 1}$ be a sequence of positive numbers which converges to $x>0$, let $\delta\in\R\backslash\{1\}$, and assume {\rm (\ref{ind})--(\ref{gg})}.
For each $n\in\N$ define $V_n=(V_{n,k})_{k\ge 0}$ and $Y_n=(Y_n(t))_{t\ge 0}$ by setting $V_{n,0}:=\lfloor nx_n\rfloor$ and 
\begin{eqnarray*}
V_{n,k+1}&:=&\sum_{m=1}^{V_{n,k}}\xi_{m}^{(k+1)}\qquad\mbox{for $k\ge 0$ and}\\
Y_n(t)&:=&\frac{V_{n,\lfloor nt\rfloor}}{n}\qquad\mbox{for $t\in[0,\infty)$}.
\end{eqnarray*}
Let $Y$ be a solution of {\rm (\ref{bes})} with $Y(0)=x$.
Then 
 $Y_n\stackrel{J_1}{\Longrightarrow}Y\left(\cdot\wedge \si_0^Y\right)$  as $n\to\infty$.
\end{theorem}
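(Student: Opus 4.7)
The plan is to derive Theorem \ref{uncond} from Proposition \ref{500} by coupling $V_n$ with the modified chain $\widetilde V_n$, truncating at level $\epsilon n$, and then sending $\epsilon\downarrow 0$.

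First I couple $V_n$ and $\widetilde V_n$ on a common probability space, using the same offspring variables $\xi_m^{(k)}$ and the same initial value $\lfloor nx_n\rfloor$. Comparing \eqref{sj} and \eqref{vt} shows that the two recursions coincide as long as the current value is $\ge M$, so by induction $V_{n,k}=\widetilde V_{n,k}$ until the first time one of them falls below $M$. Fix $0<\epsilon<x$. For $n\ge M/\epsilon$ and every $k<n\sigma^{Y_n}_\epsilon$ one has $V_{n,k}>\epsilon n\ge M$, hence $\varphi_\epsilon(Y_n)=\varphi_\epsilon(\widetilde Y_n)$. Proposition \ref{500} gives $\widetilde Y_n\Rightarrow Y$, and Lemma \ref{cont} says $\varphi_\epsilon$ is $P_x$-a.s.\ continuous at $Y$; the continuous mapping theorem then yields $\varphi_\epsilon(Y_n)\Rightarrow\varphi_\epsilon(Y)$ for every $\epsilon\in(0,x)$.

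Next I send $\epsilon\downarrow 0$ on the limit side, treating the two regimes separately. When $\delta>1$ the process $2Y$ is a transient squared Bessel process of dimension $2\delta>2$, so $\inf_t Y(t)>0$ $P_x$-a.s.\ and eventually $\varphi_\epsilon(Y)=Y=Y(\cdot\wedge\sigma^Y_0)$. When $\delta<1$, $Y$ hits $0$ at some finite $\sigma^Y_0$ $P_x$-a.s.; continuity of $Y$ then gives $\sigma^Y_\epsilon\uparrow\sigma^Y_0$ and, together with \eqref{dt}, the locally uniform convergence $\varphi_\epsilon(Y)\to Y(\cdot\wedge\sigma^Y_0)$. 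In either case $\varphi_\epsilon(Y)\Rightarrow Y(\cdot\wedge\sigma^Y_0)$. The case $\delta=1$ is excluded precisely because $2Y$ then lives at the boundary (dimension $2$) between the two regimes and is recurrent without hitting $0$, invalidating both arguments.

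To conclude via a standard triangle-inequality/Portmanteau argument it remains to match $Y_n$ with its truncation in the limit, i.e.\ to prove
\[
\lim_{\epsilon\downarrow 0}\limsup_{n\to\infty}P\bigl[d_{J_1,T}(Y_n,\varphi_\epsilon(Y_n))>\eta\bigr]=0\qquad\text{for each }T,\eta>0.
\]
For $\delta>1$ this reduces to $P[\sigma^Y_\epsilon\le T]\to 0$, which is standard for transient Bessel processes. The main obstacle is $\delta<1$: after $Y_n$ first enters $[0,\epsilon]$, the BP $V_n$ restarts from a value of order $\epsilon n$ and I must bound both its subsequent supremum and its time to extinction by a constant multiple of $\epsilon n$ with probability close to one, uniformly in large $n$. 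Such post-hitting-time control can be extracted from the martingales $M_k$ and $M_k^2-A_k$ of Lemma \ref{mm} applied between $\sigma^V_0$ and $\tau^V_{K\epsilon n}$: optional stopping at the two-sided gambler's-ruin time together with the identity $E[M_{\sigma^V_0\wedge\tau^V_{K\epsilon n}}^2-A_{\sigma^V_0\wedge\tau^V_{K\epsilon n}}]=(V_{k_0}-k_0\delta)^2-A_{k_0}$ bounds the upward escape probability by $O(1/K)$ and the expected duration by $O(\epsilon n)$. The analogous estimates for the backward BP have been carried out in \cite{KM11}, and by Remark \ref{d1} they carry over to the forward BP with $\delta$ replaced by $1-\delta$, which is exactly the quantity that controls the $\delta<1$ regime.
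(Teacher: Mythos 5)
Your proposal follows essentially the same route as the paper: couple $V_n$ with the modified chain $\widetilde V_n$, apply Proposition~\ref{500} together with Lemma~\ref{cont} and the continuous mapping theorem to the truncation $\varphi_\epsilon$, and remove the truncation via Lemma~\ref{billy}. Two differences and two small inaccuracies are worth flagging. For $\delta>1$ the paper bypasses the $\epsilon$-limit entirely: by Corollary~\ref{rc}, $P[\sigma_M^{V_n}<\infty]\to 0$, so $V_n$ and $\widetilde V_n$ agree as whole paths with probability tending to one, and the claim follows from Proposition~\ref{500} and the ``convergence together'' theorem \cite[Th.\ 3.1]{Bil99}. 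Your route via $P[\sigma_\epsilon^Y\le T]\to 0$ also works, but it needs an unstated continuous-mapping step to transfer the hitting-time estimate from $Y$ to $Y_n$ uniformly before taking $\epsilon\downarrow 0$. For $\delta<1$ the crucial input is the uniform no-comeback estimate $P_{\lfloor\epsilon n\rfloor}[\tau^{\widetilde V}_{c\epsilon n}<\sigma^{\widetilde V}_0]<\gamma$, which is Lemma~\ref{5.5ab}, obtained from \cite[(5.5)]{KM11} via Remark~\ref{d1}; you cite the right source. However, the martingale sketch you give as an alternative is misleading: when $0<\delta<1$ the drift works against you in the optional-stopping identity for $M_k$, the true escape probability scales like $K^{\delta-1}$ rather than $O(1/K)$, and the proof of Lemma~\ref{5.5ab} actually requires the multiscale argument of \cite[Lemma~5.3]{KM11} rather than the two martingales of Lemma~\ref{mm} alone. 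Finally, the time-to-extinction control you propose is unnecessary: the $J_1$-distance on $[0,T]$ between $Y_n$ and $\varphi_\epsilon(Y_n)$ is controlled by the post-$\sigma_\epsilon^{Y_n}$ supremum of $Y_n$ alone, since after $\sigma_\epsilon^{Y_n}$ the frozen process $\varphi_\epsilon(Y_n)$ stays in $[0,\epsilon]$ while $Y_n\ge 0$. Since you ultimately defer to \cite{KM11} and Remark~\ref{d1}, the overall argument is correct.
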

\begin{proof}
Let $\widetilde V_n$ and $\widetilde Y_n$ be defined as in Proposition \ref{500}, where $\widetilde V_{n,0}=V_{n,0}$ for all $n$.
We denote by $d^\circ_\infty$  the $J_1$-metric on $D[0,\infty)$ as defined in \cite[(16.4)]{Bil99}.

We first consider the case $\delta>1$. In this case $P_x[\si_0^Y=\infty]=1$ and, hence, $Y=Y\left(\cdot\wedge \si_0^Y\right)$, see e.g.\ \cite[(48.5)]{RW00}.
Moreover, on the event  $\{\si_M^{V_n}=\infty\}$ we have $\widetilde V_n=V_n$ and thus
$\widetilde Y_n=Y_n$.
Consequently, we have for every $\eps>0$, $P\left[d^\circ_\infty\left(\widetilde Y_n,Y_n\right)>\eps\right] \le
P\left[\si_{M}^{V_n}<\infty\right]\to 0$ as $n\to\infty$ due to Corollary \ref{rc}.
Consequently, $d^\circ_\infty\left(\widetilde Y_n,Y_n\right)$ converges in distribution to 0 as $n\to\infty$. Therefore, by Proposition \ref{500} and \cite[Th.\  3.1]{Bil99}  (a  ``convergence together'' theorem), $Y_n\stackrel{J_1}{\Longrightarrow} Y$ as $n\to\infty$. 
This completes the proof in case $\delta>1$. 

Now we consider the case $\delta<1$.
Our first goal is to show that
\begin{equation}\label{j1}
\varphi_0\left(\widetilde Y_n\right)\stackrel{J_1}{\Longrightarrow} \varphi_0(Y)\qquad \mbox{as $n\to\infty$}.
\end{equation}
We aim to use \cite[Th.\ 3.2]{Bil99}, quoted as Lemma \ref{billy} in the Appendix, for this purpose. First observe that for all $m\in\N,$
$\varphi_{1/m}(\widetilde Y_n)\stackrel{J_1}{\Longrightarrow} \varphi_{1/m}(Y)$ as $n\to\infty$
due to Proposition \ref{500}, Lemma \ref{cont} and the continuous mapping theorem. Moreover, $\varphi_{1/m}(Y)\Longrightarrow \varphi_0(Y)$ as $m\to\infty$ since $Y$ has a.s.\ continuous paths. For the proof of (\ref{j1}) it therefore suffices to show, due to Lemma \ref{billy}, that 
\begin{equation}\label{c}
\lim_{m\to\infty}\limsup_{n\to\infty}P\left[d^\circ_\infty\left(
\varphi_{1/m}\left(\widetilde Y_n\right),
\varphi_{0}\left(\widetilde Y_n\right)\right)>2\eps\right]=0\quad\mbox{for every $\eps>0$}.
\end{equation}
For the proof of (\ref{c}) we use \cite[(12.16)]{Bil99} and see that for all $\eps>0 
$, $n\in\N$, and $y\in(0,x\wedge \eps)$,
\begin{eqnarray}\nonumber
\lefteqn{\hspace*{-8mm}
P\left[d^\circ_\infty\left(
\varphi_{y}\left(\widetilde Y_n\right),
\varphi_{0}\left(\widetilde Y_n\right)\right)>2\eps\right]\ \le\ P\left[\left\|
\varphi_{y}\left(\widetilde Y_n\right)-
\varphi_{0}\left(\widetilde Y_n\right)\right\|_\infty>2\eps\right]}\\
\qquad &\le&
P\left[\si_0^{\widetilde V_n}<\infty, \widetilde V_{n,\si_0^{\widetilde V_n}}<-n\eps\right]+P\left[\sup\left\{\widetilde Y_n(s):  \si_{y}^{\widetilde Y_n}\le s\le \si_{0}^{\widetilde Y_n}\right\}>\eps\right]. \label{jojo}
\end{eqnarray}
The first term in (\ref{jojo}) is 0 for large enough $n$ since $\widetilde V_{\si_0^{\widetilde V}}\ge -M$.  The second term is $\le
P_{\lfloor yn\rfloor}\left[\tau_{\eps n}^{\widetilde
    V}<\si_0^{\widetilde V}\right]$.  Lemma \ref{5.5ab} now yields
(\ref{j1}) if we choose $y=1/m$.  If we choose $y=M/n$ then the above
estimate and Lemma \ref{5.5ab} give that
\begin{equation}\label{abc}
d^\circ_\infty\left(\varphi_{M/n}\left(\widetilde Y_n\right), \varphi_{0}\left(\widetilde Y_n\right)\right)\to 0\quad \mbox{in distribution as $n\to\infty$.}
\end{equation}
Consequently, by (\ref{j1}) and \cite[Th.\  3.1]{Bil99},
$\varphi_{M/n}\left(\widetilde Y_n\right)
\stackrel{J_1}{\Longrightarrow} \varphi_0(Y)$ as $n\to\infty$.
However, recall that $\widetilde V_{n,k}=V_{n,k}$ for all $0\le k\le \si_{M}^{V_n}$ and therefore
$\varphi_{M/n}\left(\widetilde Y_n\right)=\varphi_{M/n}\left(Y_n\right)$.
Hence,
$\varphi_{M/n}\left(Y_n\right)\stackrel{J_1}{\Longrightarrow} \varphi_0(Y)$ as $n\to\infty$.
As in (\ref{abc}),
$d^\circ_\infty(\varphi_{M/n}(Y_n), \varphi_{0}(Y_n))$
tends to 0 in distribution as $n\to\infty$. 
The claim  for $\delta<1$  now follows from  another application of \cite[Th.\  3.1]{Bil99}. (Note that $\varphi_0\left(Y_n\right)=Y_n$ since 0 is absorbing for $V$.)
\end{proof}

\section{Diffusion approximation of conditioned branching processes}\label{sunset}
The main result of this section is the following. Recall that $\hV$ is obtained from $V$ by conditioning on $\{\si_0^V<\infty\}$. In particular,  by Corollary \ref{rc}, $\hV=V$ if $\delta<1$.
\begin{theorem}\label{cond} {\bf (Convergence of conditioned processes)} 
Assume the conditions of Theorem \ref{uncond} and 
let $\hY=(\hY(t))_{t\ge 0}$ be a solution to 
\begin{equation}\label{bes2}
d\hY(t)=(1-|\delta-1|)\ dt+\sqrt{2\hY^+(t)}\ dB_t,\qquad \hY(0)=x.
\end{equation}
Then as $n\to\infty$, 
\begin{eqnarray}
\hY_n&\stackrel{J_1}\Longrightarrow&\hY\left(\cdot\wedge \si_0^{\hY}\right),\label{cconv1}\\
\si_0^{\hY_n}&\Longrightarrow&\si_0^{\hY},\quad\mbox{and}\label{cconv2}\\
\int_0^{\si_0^{\hY_n}}\hY_n(s)\ ds&\Longrightarrow&\int_0^{\si_0^{\hY}}\hY(s)\ ds.\label{cconv3}
\end{eqnarray}
\end{theorem}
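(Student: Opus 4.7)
The plan is to split into the cases $\delta<1$ and $\delta>1$.

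For $\delta<1$, the drift $1-|\delta-1|$ in (\ref{bes2}) equals $\delta$, so (\ref{bes2}) agrees with (\ref{bes}); by Corollary~\ref{rc} we have $\hV=V$ and hence $\hY_n=Y_n$. Then (\ref{cconv1}) is just Theorem~\ref{uncond}, while (\ref{cconv2}) and (\ref{cconv3}) follow from the continuous mapping theorem applied via Lemma~\ref{cont}, letting the parameter $\eps$ there tend to $0$ (justified by $\si_0^Y<\infty$ a.s.\ when $\delta<1$, since $2Y$ is then squared Bessel of dimension $2\delta<2$).

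For $\delta>1$ I would apply Doob's $h$-transform at both the discrete and the diffusion level. Let $h(k):=P_k[\si_0^V<\infty]$; this is positive because $\xi$ vanishes with positive probability, and by the strong Markov property $h$ is harmonic for $V$ killed at $0$. Thus $\hV$ is the Markov chain with cylinder laws
\[
\hat P[V_0=v_0,\ldots,V_k=v_k]=\frac{h(v_k)}{h(v_0)}\,P[V_0=v_0,\ldots,V_k=v_k].
\]
On the diffusion side, $H(x):=x^{1-\delta}$ satisfies $xH''(x)+\delta H'(x)=0$, so $H$ is harmonic for the generator $Lf=xf''+\delta f'$ of $Y$ on $(0,\infty)$, and the $H$-transform of $Y$ has drift $\delta+2x\,H'(x)/H(x)=\delta+2(1-\delta)=2-\delta=1-|\delta-1|$, identifying the $H$-transformed diffusion as $\hY$ solving (\ref{bes2}). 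The key analytic input is the polynomial asymptotic
\[
h(k)\,\sim\,C\,k^{1-\delta}\qquad\text{as }k\to\infty,
\]
which I would derive from Theorem~\ref{uncond} together with the explicit scale function of $Y$ (giving $P_x[\si_\eps^Y<\tau_R^Y]=(R^{1-\delta}-x^{1-\delta})/(R^{1-\delta}-\eps^{1-\delta})$, hence $P_x[\si_\eps^Y<\infty]=(\eps/x)^{\delta-1}$ upon $R\to\infty$), combined with Lemma~\ref{5.5ab} to control the transit of $V$ from level $\eps n$ down to $0$. With this asymptotic in hand, for any bounded continuous $F$ on $D[0,T]$,
\[
\hat E[F(Y_n|_{[0,T]})]=E\!\left[F(Y_n|_{[0,T]})\,\frac{h(V_{n,\lfloor nT\rfloor})}{h(V_{n,0})}\right]\longrightarrow E\!\left[F(Y|_{[0,T]})\,(Y(T)/x)^{1-\delta}\right]=\hat E[F(\hY|_{[0,T]})],
\]
where the left convergence uses Theorem~\ref{uncond} plus uniform integrability, and the right equality is the diffusion-level $H$-transform. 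This gives (\ref{cconv1}) on every bounded interval; extension through $\si_0^{\hY}$ follows by tightness, using that the drift $2-\delta<1$ of $\hY$ makes $\si_0^{\hY}<\infty$ a.s. Finally, (\ref{cconv2}) and (\ref{cconv3}) follow from (\ref{cconv1}) by continuous mapping and Lemma~\ref{cont}.

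The main obstacle is establishing the asymptotic $h(k)\sim Ck^{1-\delta}$ with enough uniformity, and the associated uniform integrability of the Doob weights $h(V_{n,\lfloor nT\rfloor})/h(V_{n,0})$. These weights blow up when $V_{n,\lfloor nT\rfloor}$ is of order $o(n)$, corresponding to paths on which $\hY$ is near $0$ at time $T$. Since $h(V_{n,k\wedge\si_0^V})$ is a bounded martingale of expectation $h(V_{n,0})$, one has $E[h(V_{n,\lfloor nT\rfloor})/h(V_{n,0})]\le 1$; uniform integrability should then follow from an $L^p$-bound on the weight obtained via moment estimates for $V_{n,\lfloor nT\rfloor}$ of the type provided by Lemma~\ref{mm}.
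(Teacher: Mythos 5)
Your overall strategy coincides with the paper's: reduce to $\delta>1$ via $\hV=V$ for $\delta<1$, identify $\hV$ as the Doob $h$-transform of $V$ with $h(k)=P_k[\si_0^V<\infty]$, match this with the diffusion-level $H$-transform $H(x)=x^{1-\delta}$ sending the generator $xf''+\delta f'$ to one with drift $2-\delta=1-|\delta-1|$, and feed the polynomial asymptotic of $h$ into the change-of-measure. Your diffusion-level computation of the transformed drift is correct. But there are three concrete gaps.

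First, your route to $h(k)\sim C k^{1-\delta}$ does not work as stated. Theorem~\ref{uncond} plus the scale function gives, for fixed $\eps>0$, that $P_{\lfloor nx\rfloor}[\si^V_{\eps n}<\infty]\to(\eps/x)^{\delta-1}$, i.e.\ control of the macroscopic crossing from level $nx$ to level $\eps n$. It says nothing about $P_{\lfloor\eps n\rfloor}[\si_0^V<\infty]$ in the regime $\eps\to0$, which is where the constant $C$ lives; and Lemma~\ref{5.5ab} is stated for $\delta<1$ and controls upward excursions, not the descent to $0$ for $\delta>1$. The paper proves the asymptotic (Proposition~\ref{har}) by an entirely separate multiscale/Cauchy-sequence argument over geometric scales $a^m$, resting on the discrete gambler-ruin estimate Lemma~\ref{5.3} (which in turn uses the overshoot control of Lemma~\ref{os} and the local transition estimate Lemma~\ref{5.2}). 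This machinery cannot be replaced by the soft weak-convergence input you propose.

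Second, the uniform-integrability argument you sketch for (\ref{cconv1}) on $D[0,T]$ is not established, and is likely hard to salvage directly: the Doob weight $h(V_{n,\lfloor nT\rfloor})/h(V_{n,0})$ is of order $n^{\delta-1}$ whenever $V_{n,\lfloor nT\rfloor}=O(1)$, and the $L^p$ bound you appeal to is not provided by Lemma~\ref{mm} (which gives second-moment control of $\widetilde V$, not of the reciprocal-power weight). The paper avoids this by stopping at $\si_\eps^{\hY_n}$, so that the weight stays bounded and the change of measure (via Girsanov) can be carried out cleanly; it then removes the $\eps$-truncation via Lemma~\ref{billy}. A version of your UI argument would still need a quantitative substitute for that truncation step.

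Third, for all three displayed limits you invoke Lemma~\ref{cont} and ``let $\eps\to 0$,'' but Lemma~\ref{cont} only gives continuity of $\si_\eps^\cdot$ for $\eps>0$. Passing from $\si_\eps$ to $\si_0$ uniformly in $n$ is exactly where the hard work is: the paper does this with Lemma~\ref{billy} combined with the quantitative tail bound $P_n[\si_0^{\hV}>c_{\ref{ee}}n]<\eps$ of Lemma~\ref{7.1} (itself resting on Lemmas~\ref{6.1} and~\ref{5.5c}/\ref{5.5ab}). ``Tightness, using that $\si_0^{\hY}<\infty$ a.s.'' is not enough; you need a uniform-in-$n$ control of the time $\hV$ spends below level $\eps n$ before absorption. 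This applies already to your $\delta<1$ case, so that part of your proof also has a gap.
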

In the proof the (harmonic) function $h$ defined by
\begin{equation}\label{h}
h(n):=P_n[\si_0^V<\infty],\qquad n\in\N_0.
\end{equation}
will play an important role. Note that it follows from (\ref{sj})  that $h(x)$ is non-increasing in $x$. 
\begin{remark}\label{vc}{\rm {\bf (Doob transform)} 
Recall that $\hV$ is Doob's $h$-transform of $V$ with $h$ as defined in (\ref{h}),
see e.g.\ \cite[Chapter 7.6.1]{LPW09}.  By this we mean that $\hV$ is a Markov chain with transition probabilities $P_x[\hV_n=y]=P_x[V_n=y]\frac{h(y)}{h(x)}$.
More generally, it follows from the strong Markov property, that for any stopping time $\si\le \si_0^V$ and all $x,y\in\N_0$, 
\begin{equation}\label{ht}
P_x[\hV_\si=y]=\frac{P_x[V_\si=y, \si_0^V<\infty]}{P_x[\si_0^V<\infty]}=P_x[V_\si=y]\frac{h(y)}{h(x)}.
\end{equation} 
In many cases, a Doob transform of a process belongs to the same class of processes as the process itself. For example, the asymmetric simple RW on $\Z$ with probability $p\in(1/2,1)$ of stepping to the right, start at 1 and absorption at 0 is, conditioned on hitting 0, an asymmetric simple RW on $\Z$ with probability $p$ of stepping to the left. (In this case $h(x)=((1-p)/p)^x$ for $x\ge 0$.)
Similarly, a supercritical Galton-Watson process conditioned on extinction is a subcritical Galton-Watson process, see e.g.\ \cite[Ch.\ I.12, Th.\ 3]{AN72}. In a wider sense, squared Bessel processes of dimension $d>2$ conditioned on hitting 0 are squared Bessel processes of dimension 
$4-d$.

If similarly $\overline X$, i.e.\ $X$ conditioned on hitting 0, were under $P_1$ an ERW satisfying (IID), (WEL) and (BD$_{\overline M}$) for some $\overline M$, or, equivalently, if $\hV$ were of the form described in Proposition \ref{coup} then Theorem \ref{cond} would follow from Theorem \ref{uncond}. However, we do not expect that the conditioned processes $\overline X$ and $\hV$ are of this form on the microscopic level. Theorem \ref{cond} shows that nevertheless on a macroscopic scale $\hV$ does behave as $V$ with drift parameter $\overline\delta=1-|\delta-1|$.   }
\end{remark}
First we investigate the tail behavior of $h$. 
\begin{prop}{\bf (Asymptotics of $h$)}\label{har}
Let $\delta>1$. Then there is $\con{hh}\in(0,\infty)$ such that
\begin{equation}\label{hard}
\lim_{n\to\infty}{n^{\delta-1}}h(n)=c_{\ref{hh}}.
\end{equation}
\end{prop}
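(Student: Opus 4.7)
The plan is to identify the polynomial asymptotic of $h$ by combining the diffusion approximation of Theorem~\ref{uncond} with the classical hitting probability formula for squared Bessel processes, together with a strong Markov scaling argument.

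First, as noted after (\ref{bes}), for $\delta>1$ the process $2Y$ is a squared Bessel process of dimension $2\delta>2$, whose generator $\mathcal L f=\delta f'+yf''$ has $y\mapsto y^{1-\delta}$ as a harmonic function on $(0,\infty)$; in particular $P_x[\si_y^Y<\infty]=(y/x)^{\delta-1}$ for $0<y<x$. Applying Theorem~\ref{uncond} with $x_n=x=1$ and the continuous mapping theorem (via Lemma~\ref{cont} applied to $f\mapsto\si_\epsilon^f$), I would obtain for every fixed $\epsilon\in(0,1)$
\[
\lim_{n\to\infty}P_n\bigl[\si_{\lfloor\epsilon n\rfloor}^V<\infty\bigr]=\epsilon^{\delta-1},
\]
together with the overshoot estimate $V_{\si_{\lfloor\epsilon n\rfloor}^V}/n\Rightarrow\epsilon$ in distribution on the event $\{\si_{\lfloor\epsilon n\rfloor}^V<\infty\}$, which uses the pathwise continuity of $Y$.

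Second, the strong Markov property at $\si_{\lfloor\epsilon n\rfloor}^V$ yields
\[
h(n)=E_n\bigl[h(V_{\si_{\lfloor\epsilon n\rfloor}^V})\,;\,\si_{\lfloor\epsilon n\rfloor}^V<\infty\bigr].
\]
Because $h$ is non-increasing (an immediate consequence of (\ref{sj}): more independent founders can only make extinction less likely), this gives the easy lower bound $h(n)\ge h(\lfloor\epsilon n\rfloor)P_n[\si_{\lfloor\epsilon n\rfloor}^V<\infty]$. To convert the resulting one-sided scaling relation into actual convergence I would bootstrap: use the gambler's-ruin-type estimates from \cite{KM11} based on the martingale pair $(M_k,M_k^2-A_k)$ from Lemma~\ref{mm} (in a two-step decomposition that first descends from $n$ to a fixed level $K$ and then hits $0$ from there) to obtain a priori bounds $c_1n^{1-\delta}\le h(n)\le c_2n^{1-\delta}$ for some $0<c_1\le c_2<\infty$. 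Once these are in place, the overshoot $V_{\si_{\lfloor\epsilon n\rfloor}^V}=\epsilon n(1+o_P(1))$ propagates through to $h(V_{\si_{\lfloor\epsilon n\rfloor}^V})/h(\lfloor\epsilon n\rfloor)\to 1$ in probability, and dominated convergence in the strong Markov identity gives $n^{\delta-1}h(n)-(\epsilon n)^{\delta-1}h(\lfloor\epsilon n\rfloor)\to 0$ for every $\epsilon\in(0,1)$. A standard Cauchy argument across a dense set of rescalings then forces the existence of $c_{\ref{hh}}=\lim_n n^{\delta-1}h(n)\in[c_1,c_2]\subset(0,\infty)$.

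The hard part will be the a priori two-sided bound $c_1n^{1-\delta}\le h(n)\le c_2n^{1-\delta}$, and especially the upper bound, because the naive estimate $h(n)\le P_n[\si_{\lfloor\epsilon n\rfloor}^V<\infty]\to\epsilon^{\delta-1}$ stays of order~$1$ and is therefore far too weak. Getting the correct $n^{1-\delta}$ decay requires either a refinement of the diffusion approximation that remains valid when the hitting level is constant (rather than proportional to $n$), or a direct discrete optional-stopping argument with a carefully chosen approximately super-harmonic function $g(x)\asymp x^{1-\delta}$ constructed from the martingales of Lemma~\ref{mm} applied at a stopping time of the form $\si_1^V\wedge\tau_N^V$; the latter also needs a boundary correction to cope with the pole of $x^{1-\delta}$ at $x=0$. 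Once that bootstrap is complete, the scaling-and-Cauchy step above is routine.
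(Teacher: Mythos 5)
The intuition at the start is sound, and you correctly identify that the naive diffusion estimate $P_n[\si_{\lfloor\eps n\rfloor}^V<\infty]\to\eps^{\delta-1}$ is of order one and therefore cannot by itself produce the $n^{1-\delta}$ decay. But there are two genuine gaps in the conversion step, and the second one is fatal to the plan as written.

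First, the claim $h(V_{\si_{\lfloor\eps n\rfloor}^V})/h(\lfloor\eps n\rfloor)\to 1$ in probability does not follow from the a priori bounds $c_1 n^{1-\delta}\le h(n)\le c_2 n^{1-\delta}$. Those bounds, together with the undershoot control, only give that the ratio stays eventually in $[c_1/c_2,c_2/c_1]$; to push it to $1$ you would need to know $h(m)/h(m')\to 1$ whenever $m/m'\to 1$, which is essentially the regularity you are trying to establish. Second, and more seriously, the ``standard Cauchy argument'' does not exist: the relation $a_n-a_{\lfloor\eps n\rfloor}\to 0$ for every fixed $\eps\in(0,1)$, together with $c_1\le a_n\le c_2$, does \emph{not} force $a_n$ to converge. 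For instance $a_n:=2+\sin(\log\log n)$ is bounded away from $0$ and $\infty$ and satisfies $a_n-a_{\lfloor\eps n\rfloor}\to 0$ for every $\eps$ (because $\log\log(\eps n)-\log\log n\to 0$), yet it oscillates. Your scaling relation compares $a_n$ only with $a_m$ at $m$ \emph{proportional} to $n$; this is structurally too weak.

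What the paper actually needs, and what the diffusion approximation cannot supply, is a comparison of $h$ at a divergent scale with $h$ at a \emph{fixed} scale, with multiplicative error tending to $1$. This is supplied by Lemma~\ref{5.3}: for a geometric ladder $a^\ell<a^m$ one has $h_\ell^-(m)\le P_x[\si_{a^\ell}^V<\infty]\le h_\ell^+(m)$, where $h_\ell^\pm(m)\,a^{(m-\ell)(\delta-1)}$ lies in $[K_1(\ell),K_2(\ell)]$ with $K_1(\ell),K_2(\ell)\to 1$ as $\ell\to\infty$ \emph{uniformly in $m>\ell$}. Feeding this into the strong Markov identity $h(\lfloor a^m\rfloor)\ge P_{\lfloor a^m\rfloor}[\si_{a^\ell}<\infty]\,h(\lfloor a^\ell\rfloor)$ yields
$\liminf_m a^{m(\delta-1)}h(\lfloor a^m\rfloor)\ge K_1(\ell)\,a^{\ell(\delta-1)}h(\lfloor a^\ell\rfloor)$
for every fixed $\ell$, hence $\liminf_m\ge\limsup_\ell$, which gives existence of the limit along $\{a^m\}_m$; the two-sided product bound also gives finiteness. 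The extension from geometric subsequences to all $n$ then uses the monotonicity of $h$ to squeeze and $a\searrow 1$. Lemma~\ref{5.3} itself is the ``discrete optional-stopping / gambler's-ruin with an approximately harmonic function'' you correctly point to as the missing ingredient, built from Lemma~\ref{mm} and Lemma~\ref{5.2}; but that lemma has to actually be proved and is the heart of the matter, not a step one can defer. Without it, or some replacement with the same fixed-vs-divergent-scale structure, the proposal does not close.
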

\begin{remark}\label{simi}{\rm
In the special case described in Remark \ref{s1}
formula (\ref{hard}) also follows from \cite[Th.\ 4]{Pak72}, 
see also the discussion in \cite[pp.\ 921,922]{Hop85}. }
\end{remark}
\begin{proof}[Proof of Proposition \ref{har}] 
The proof is similar to that of \cite[Lemma 8.1]{KM11}.
Let $\delta>1$.  
First we show that it suffices to prove that 
\begin{equation}\label{ga}
g(a):=\lim_{m\to\infty}a^{m(\delta-1)} h(\lfloor a^m\rfloor)\in(0,\infty)\quad\mbox{for all $a\in(1,2]$.}
\end{equation}
Let $a\in(1,2]$ and denote $m_n:=\lfloor\log_an\rfloor$ for $n\in\N$. Then, by monotonicity of $h$, 
\[a^{m_n(\delta-1)}h\left(a^{m_n+1}\right)\le n^{\delta-1}h(n)\le 
a^{(m_n+1)(\delta-1)}h\left(a^{m_n}\right)
\]
for all $n\in\N$ and hence, by (\ref{ga}),
\[a^{1-\delta}g(a)\le\liminf_{n\to\infty}n^{\delta-1}h(n)\le 
\limsup_{n\to\infty}n^{\delta-1}h(n)\le a^{\delta-1} g(a).\]
Since $0<g(a)<\infty$ this implies
\[1\le \frac{\limsup_{n\to\infty}n^{\delta-1}h(n)}{\liminf_{n\to\infty}n^{\delta-1}h(n)}
\le a^{2(\delta-1)}.\]
Letting $a\searrow 1$ proves the claim of the theorem.

It remains to show (\ref{ga}). Fix $a\in(1,2]$ and choose $\ell_0$ according to Lemma \ref{5.3}. Then for all $m> \ell\ge \ell_0$,
\begin{eqnarray*}
\lefteqn{a^{m(\delta-1)} h(\lfloor a^m\rfloor)=
a^{m(\delta-1)}P_{\lfloor a^m\rfloor}[\si_{a^\ell}\le \si_0<\infty]}
\\
&\ge&a^{m(\delta-1)}P_{\lfloor a^m\rfloor}[\si_{a^\ell}<\infty]P_{\lfloor a^\ell\rfloor}[\si_0<\infty]\\
&\stackrel{(\ref{ip})}{\ge}&  a^{(m-\ell)(\delta-1)}\,h_\ell^-(m)\, a^{\ell(\delta-1)}\,h\left(\lfloor a^\ell\rfloor\right)\
\stackrel{(\ref{pp})}{\ge}\ K_1(\ell)\ a^{\ell(\delta-1)}h\left(\lfloor a^\ell\rfloor\right)>0.
\end{eqnarray*}
Hence, since $K_1(\ell)\to 1$,
\[\liminf_{m\to\infty}a^{m(\delta-1)} h(\lfloor a^m\rfloor)\ge
\limsup_{\ell\to\infty}a^{\ell(\delta-1)} h(\lfloor a^\ell\rfloor)>0,\]
which establishes the existence of $g(a)>0$.
To rule out $g(a)=\infty$ observe that
\begin{eqnarray*}
a^{m(\delta-1)} h(\lfloor a^m\rfloor)&\le& a^{m(\delta-1)} P_{\lfloor a^m\rfloor}\left[\si_{a^{\ell_0}}<\infty\right]\stackrel{(\ref{ip})}{\le}
a^{(m-\ell_0)(\delta-1)}\ h_{\ell_0}^+(m)\ a^{\ell_0(\delta-1)}\\
&\stackrel{(\ref{pp})}{\le}& K_2(\ell_0)\ a^{\ell_0(\delta-1)}<\infty
\end{eqnarray*}
for all $m>\ell_0$.
\end{proof}
\begin{lemma}\label{6.1}
Let $\delta\in\R\backslash\{1\}$. Then there is $\con{c4}\in(0,1)$ such that for all $k,x\in\N$ and $n\ge 0$,
\begin{equation}\label{bar}
P_n\left[\#\left\{i\in\left\{1,\ldots,\si_0^{\hV}\right\}: \hV_i\in[x,2x)\right\}>2xk\right]\
\le\
P_n\left[\rho_0<\si_0^{\hV}\right]c_{\ref{c4}}^k,
\end{equation}
where $\rho_0:=\inf\{i\ge 0: \hV_i\in[x,2x)\}$.
\end{lemma}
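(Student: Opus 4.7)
\emph{Proof plan.} The approach is to combine a strong-Markov decomposition of the visit count with a uniform single-block estimate.

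\emph{Step 1 (strong Markov reduction).} Abbreviate $N := \#\{i \in \{1,\ldots,\si_0^{\hV}\} : \hV_i \in [x,2x)\}$ and let $\pi_j$ be the $j$-th time in $\{1,2,\ldots\}$ at which $\hV$ visits $[x,2x)$, so that $\{N \ge m\} = \{\pi_m < \si_0^{\hV}\}$. Whenever $\pi_{2x} < \si_0^{\hV}$ the state $\hV_{\pi_{2x}}$ lies in $[x,2x)$, so applying the strong Markov property at $\pi_{2x}$ gives
\[
\sup_{y \in [x,2x)} P_y[N \ge 2xk] \le q_x\cdot \sup_{z \in [x,2x)} P_z[N \ge 2x(k-1)], \qquad q_x := \sup_{y\in[x,2x)} P_y[N \ge 2x].
\]
Iterating yields $\sup_y P_y[N \ge 2xk] \le q_x^k$. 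One further application of strong Markov at $\rho_0$ (using that $\hV_{\rho_0}\in[x,2x)$ on $\{\rho_0 < \si_0^{\hV}\}$ and that the visit at $\rho_0$ accounts for one of the $2xk+1$ required visits) delivers
\[
P_n[N > 2xk] \le P_n[\rho_0 < \si_0^{\hV}]\, q_x^k.
\]
It therefore suffices to prove $c := \sup_{x \in \N} q_x < 1$.

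\emph{Step 2 (uniform bound).} Since each visit occupies one time unit, $N \le \si_0^{\hV}$, and it is enough to bound $\sup_{y \in [x,2x)} P_y[\si_0^{\hV} \ge 2x]$ away from $1$ uniformly in $x$. For large $x$, Theorem~\ref{cond} together with (\ref{cconv2}) gives, along any sequence $y_n \in [x_n, 2x_n)$ with $x_n \to \infty$ and (after extracting a subsequence) $y_n/x_n \to y_\infty \in [1,2]$, that $\si_0^{\hV}/x_n \Rightarrow \si_0^{\hY}$ under $P_{y_n}$, where $\hY$ solves (\ref{bes2}) with $\hY(0) = y_\infty$. Hence $P_{y_n}[\si_0^{\hV} \ge 2x_n] \to P_{y_\infty}[\si_0^{\hY} \ge 2]$. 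The map $y \mapsto P_y[\si_0^{\hY} < 2]$ is continuous on $[0,\infty)$ by standard diffusion theory and strictly positive there: the drift $1-|\delta-1|$ is $<1$, $0$ is accessible for (\ref{bes2}) (see the discussion after (\ref{bes})), and hence $\hY$ hits $0$ with positive probability in arbitrarily short time. Compactness of $[1,2]$ yields $\inf_{y_\infty\in[1,2]} P_{y_\infty}[\si_0^{\hY} < 2] =: \alpha > 0$, giving $\limsup_{x\to\infty} q_x \le 1-\alpha$. For $x$ bounded by some $X_0$, each $y \in [x,2x)$ lies in $\{1,\ldots,2X_0-1\}$, and (\ref{ind}), (\ref{a3}), (\ref{gg}) imply that $P_y[V_1 = 0] \ge P[\xi_1^{(1)}=\cdots=\xi_M^{(1)}=0]\cdot 2^{-(2X_0-1-M)^+}$, and hence $P_y[\hV_1 = 0]$, is bounded below by a positive constant $\alpha'$ depending only on $X_0$, $M$, and $\PP$. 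Thus $P_y[\si_0^{\hV} \ge 2x] \le P_y[\si_0^{\hV} \ge 2] \le 1 - \alpha'$ uniformly for $x \le X_0$. Taking $c := \max(1-\alpha, 1-\alpha') < 1$ closes the argument.

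\emph{Main obstacle.} The crux is the uniform positivity $\inf_{y_\infty \in [1,2]} P_{y_\infty}[\si_0^{\hY} < 2] > 0$: it rests on $0$ being accessible for (\ref{bes2}) and on the continuity of diffusion hitting probabilities in the starting point, both of which follow from the squared-Bessel framework noted around (\ref{bes})--(\ref{dt}), but which must be invoked carefully to produce a bound independent of $x$.
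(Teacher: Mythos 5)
Your Step 1 (the $q_x^k$ decomposition via the successive visit times $\pi_j$ and one final application of the strong Markov property at $\rho_0$) is correct and is a slightly cleaner packaging than the paper's version, which instead defines a thinned sequence $\rho_i:=\inf\{n>\rho_{i-1}+2x:\hV_n\in[x,2x)\}$ and bounds the left-hand side of (\ref{bar}) by $P_n[\rho_k<\si_0^{\hV}]$ before iterating. The elementary small-$x$ estimate in Step 2 is also fine, using that $\hV$ is the $h$-transform of $V$ so that $P_y[\hV_1=0]\ge P_y[V_1=0]$.

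However, the large-$x$ part of Step 2 has a genuine circularity problem. You invoke Theorem~\ref{cond}, specifically (\ref{cconv2}), to obtain $\si_0^{\hV}/x\Rightarrow\si_0^{\hY}$. But in the paper, (\ref{cconv2}) is deduced from Lemma~\ref{7.1} (via the Billingsley criterion Lemma~\ref{billy}), and Lemma~\ref{7.1} is in turn proved using Lemma~\ref{6.1} --- the very statement you are trying to establish. So your argument uses the conclusion as one of its own inputs. Note also that (\ref{cconv1}) alone does not give you what you need: $J_1$-convergence of $\hY_n$ to $\varphi_0(\hY)$ does not transfer to convergence of $\si_0^{\hY_n}$, since the hitting-time functional is not continuous on $D[0,\infty)$; making it transfer is precisely what Lemma~\ref{7.1}, hence Lemma~\ref{6.1}, is for.

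The paper's proof avoids this by never trying to control the duration $\si_0^{\hV}$ directly. Instead it proves two quantitative sub-steps using only tools that are already available at this point in the paper: (i) from any $z\in[x,2x)$ the conditioned process drops below $x/2$ within $x$ steps with probability bounded away from zero --- this follows from the \emph{unconditioned} functional limit theorem (Proposition~\ref{500}) and Lemma~\ref{cont}, combined with the Doob-transform inequality $P_z[\si_{x/2}^{\hV}<x]\ge P_z[\si_{x/2}^{V}<x]$; and (ii) from any $z\le x/2$ the conditioned process hits $0$ before $\tau_x^{\hV}$ with probability bounded away from zero, which follows from Proposition~\ref{har} (the tail asymptotics of $h$). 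If you want to keep your Step 1 and repair Step 2, you should replace the appeal to Theorem~\ref{cond} by an argument along these lines, i.e.\ one that only uses Proposition~\ref{500}, Lemma~\ref{cont}, and Proposition~\ref{har}.
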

\begin{proof} For $\delta<1$, where $\hV=V$, this is the statement of \cite[Prop.\ 6.1]{KM11}.
For the case $\delta>1$ we slightly modify the proof of \cite[Prop.\ 6.1]{KM11} as follows. First we show that there is $\con{ci}>0$ such that for all $x\in\N$,
\begin{equation}
\min_{x\le z<2x}P_{z}\left[\si_{x/2}^{\hV}<x\right]>c_{\ref{ci}}.\label{i}
\end{equation}
By the strong Markov property and
monotonicity with respect to the starting point we have for all $2M\le x\le z<2x$,
\begin{eqnarray*}\nonumber
P_{z}\left[\si_{x/2}^{\hV}<x\right]&=&\frac{P_{z}[\si_{x/2}^{V}<x,\ \si_0^V<\infty]}{
h(z)}
\ \ge\ \frac{h(\lceil x/2\rceil)}{
h(z)}\, P_{z}[\si_{x/2}^{V}<x]\\
& \ge& P_{z}[\si_{x/2}^{V}<x]\ \ge\  P_{2x}[\si_{x/2}^{V}<x]\ =\ P_{2x}[\si_{x/2}^{\widetilde V}<x].
\end{eqnarray*}
The last expression  is strictly positive for all $x>0$ and 
converges due to Proposition \ref{500} and Lemma \ref{cont} as $x\to\infty$ to $P_2[\si_{1/2}^Y<1]>0$, where $Y$ solves the SDE (\ref{bes}). This proves (\ref{i}). 

Next we show that there is  $\con{cii}>0$ such that  for all $x\in\N$,
\begin{equation}
\min_{0\le z\le x/2}P_{z}\left[\si_0^{\hV}<\tau_x^{\hV}\right]>c_{\ref{cii}}.\label{ii}
\end{equation}
For the proof of (\ref{ii}) note that by the strong Markov property and
monotonicity of $h$ for all $0\le z\le x/2, z\in\N_0$,
\begin{eqnarray*}
P_{z}\left[\tau_x^{\hV}<\si_0^{\hV}\right]&=&
\frac{P_{z}[\tau_x^{V}<\si_0^{V}<\infty]}{h(z)}\ \le\
\frac{h(x)}{h(z)}P_{z}[\tau_x^{V}<\si_0^{V}]
\\
 &\le&
\frac{h(x)}{h(z)}\ \le\
\frac{h(x)}{h(\lfloor x/2\rfloor)}
\ \le\
2^{1-\delta}\frac{x^{\delta-1}h(x)}{\lfloor x/2\rfloor^{\delta-1}h(\lfloor x/2\rfloor)},
\end{eqnarray*} 
which converges due to Proposition \ref{har} to $2^{1-\delta}<1$ as $x\to\infty$. Since the left hand side of (\ref{ii}) is strictly positive for all $x$ this implies (\ref{ii}).

Now define $\rho_i:=\inf\{n>\rho_{i-1}+2x: \hV_n\in[x,2x)\}$ for all $i\in\N$. Then the left hand side of (\ref{bar}) is 
less than or equal to
\begin{equation}\label{drs}
P_n\left[\rho_k<\si_0^{\hV}\right]=
P_n\left[\rho_{k}<\si_0^{\hV}\ \big|\ \rho_{k-1}<\si_0^{\hV}\right]P_n\left[\rho_{k-1}<\si_0^{\hV}\right].
\end{equation}
By the strong Markov property for $\hV$,
\begin{eqnarray*}
\lefteqn{P_n\left[\rho_{k}<\si_0^{\hV}\ \big|\ \rho_{k-1}<\si_0^{\hV}\right]\le\max_{x\le z<2x}P_z\left[\rho_1<\si_0^{\hV}\right]}\\
&=&\max_{x\le z<2x}\left(P_z\left[\rho_1<\si_0^{\hV}, \si_{x/2}^{\hV}<x\right]+P_z\left[\rho_1<\si_0^{\hV}, \si_{x/2}^{\hV}\ge x\right]\right)\\
&\le &\max_{x\le z<2x}\left(P_z\left[\rho_1<\si_0^{\hV}\mid \si_{x/2}^{\hV}<x\right]
P_z\left[\si_{x/2}^{\hV}<x\right]+1-P_z\left[\si_{x/2}^{\hV}< x\right]\right)\\
&= &\max_{x\le z<2x}\left(1-P_z\left[\si_{x/2}^{\hV}<x\right]\left(1-P_z\left[\rho_1<\si_0^{\hV}\mid \si_{x/2}^{\hV}<x\right]\right)\right)\\
&= &1-\min_{x\le z<2x}\left(P_z\left[\si_{x/2}^{\hV}<x\right]P_z\left[\rho_1>\si_0^{\hV}\mid \si_{x/2}^{\hV}<x\right]\right)\\
&\le &1-\left(\min_{x\le z<2x}P_z\left[\si_{x/2}^{\hV}<x\right]\right)\left(\min_{0\le y\le x/2}P_y\left[\rho_0>\si_0^{\hV}\right]\right)\ \le 1-c_{\ref{ci}}c_{\ref{cii}}
\end{eqnarray*}
by   (\ref{i}) and (\ref{ii}). Substituting this into (\ref{drs}) and iterating gives the claim with $c_{\ref{c4}}:=1-c_{\ref{ci}}c_{\ref{cii}}$.
\end{proof}
The next lemma states that Lemma \ref{5.5ab} also holds for $\hV$. 
\begin{lemma}\label{5.5c} Let $\delta>1$. 
For every $\ga>0$ there is $\con{eps0}(\ga)\in(0,\infty)$ such that 
for all $n\in\N$,
$P_n\left[\tau_{c_{\ref{eps0}}n}^{\hV}<\si_0^{\hV}\right]<\ga$.
\end{lemma}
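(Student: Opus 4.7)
The plan is to reduce the claim about $\hV$ to one about the unconditioned $V$ via the Doob-transform identity, and then invoke the tail asymptotics of $h$ from Proposition~\ref{har}. By definition of $\hV$ as $V$ conditioned on $\{\si_0^V<\infty\}$, together with the strong Markov property of $V$ applied at the stopping time $\tau_{cn}^V$ and monotonicity of $h$ (recorded after~(\ref{h})),
\begin{equation*}
P_n[\tau_{cn}^{\hV}<\si_0^{\hV}]
=\frac{P_n[\tau_{cn}^V<\si_0^V<\infty]}{h(n)}
=\frac{E_n\!\left[\mathbf{1}_{\tau_{cn}^V<\si_0^V}\,h(V_{\tau_{cn}^V})\right]}{h(n)}
\le \frac{h(cn)}{h(n)},
\end{equation*}
where the last step uses $V_{\tau_{cn}^V}\ge cn$.

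It then suffices to show that $\sup_{n\in\N} h(cn)/h(n)<\ga$ once $c$ is large enough. By Proposition~\ref{har}, $h(n)\sim c_{\ref{hh}}\,n^{1-\delta}$ as $n\to\infty$, so for each fixed $c>1$ the ratio $h(cn)/h(n)$ converges to $c^{1-\delta}$ as $n\to\infty$. Since $\delta>1$, choosing $c$ with $c^{1-\delta}<\ga/2$ gives some $N=N(c,\ga)$ such that $h(cn)/h(n)<\ga$ for all $n\ge N$. For the finitely many $n\in\{1,\ldots,N-1\}$, observe that $h(n)>0$ for every $n\ge 1$ (by the vanishing property in (\ref{a3}) combined with the geometric offspring law (\ref{gg}), the chain $V$ has positive probability to jump from $n$ to $0$ in one step), so $\min_{1\le n<N}h(n)>0$; meanwhile $h(cn)\le h(c)\to 0$ as $c\to\infty$ by Proposition~\ref{har}. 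Enlarging $c$ if necessary therefore brings $h(cn)/h(n)$ below $\ga$ uniformly in $n$.

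The proof is essentially bookkeeping once the Doob-transform identity and the regular-variation behavior of $h$ (Proposition~\ref{har}) are in hand, so no substantial obstacle is expected; the only mild subtlety is the case of small $n$, which is handled by separately using $h(c)\to 0$.
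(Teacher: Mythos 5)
Your argument is correct and follows the paper's approach: both reduce to the bound $P_n[\tau_{cn}^{\hV}<\si_0^{\hV}]\le h(\lceil cn\rceil)/h(n)$ via the Doob-transform identity, the strong Markov property, and monotonicity of $h$, and then invoke Proposition~\ref{har}. The paper handles small $n$ slightly differently — it observes $P_n[\si_0^{\hV}<\infty]=1$ so the probability vanishes as $c\to\infty$ for each fixed $n$ — but your variant via $h(c)\to 0$ together with $\min_{1\le n<N}h(n)>0$ works equally well, and the apparent circularity of enlarging $c$ after fixing $N(c,\ga)$ is harmless because $h(cn)/h(n)$ is non-increasing in $c$, so the large-$n$ bound is preserved under such enlargement.
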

\begin{proof}
By the strong Markov property and monotonicity of $h$, for all $t>1$,
\[\limsup_{n\to\infty}P_n\left[\tau_{tn}^{\hV}<\si_0^{\hV}\right]=\limsup_{n\to\infty}\frac{P_n\left[\tau_{tn}^{V}<\si_0^{V}<\infty\right]}{h(n)}\le
\limsup_{n\to\infty}\frac{h(\lceil tn\rceil)}{h(n)}= t^{1-\delta}\]
due to Proposition \ref{har}. Since $t^{1-\delta}\to 0$ as $t\to\infty$ and $P_n[\si_0^{\hV}<\infty]=1$ for all $n\in\N$ by definition of $\hV$
this finishes the proof.
\end{proof}
\begin{lemma}\label{7.1} Let $\delta\in\R\backslash\{1\}$.
For each $\eps>0$ there is $\con{ee}(\eps)$ such that $P_n[\si_0^{\hV}>c_{\ref{ee}}(\eps)n]<\eps$ for all $n\in\N$.
\end{lemma}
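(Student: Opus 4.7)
The plan is to decompose $\sigma_0^{\hV}$ into the total time that $\hV$ spends in each dyadic interval $[2^j, 2^{j+1})$ before extinction, to bound the expected occupation time of each such interval linearly in $2^j$ via Lemma~\ref{6.1}, and to cut off the contribution of very large dyadic scales by controlling the running maximum of $\hV$.

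To this end, I would first split, for constants $c, C > 0$ to be chosen in terms of $\eps$,
\[
P_n\!\left[\sigma_0^{\hV} > cn\right] \le P_n\!\left[\tau_{Cn}^{\hV} < \sigma_0^{\hV}\right] + P_n\!\left[\sigma_0^{\hV} > cn,\ \tau_{Cn}^{\hV} \ge \sigma_0^{\hV}\right].
\]
The first summand will be made smaller than $\eps/2$ uniformly in $n$ by choosing $C$ large: Lemma~\ref{5.5c} handles this directly when $\delta > 1$, and when $\delta < 1$ (so that $\hV = V$) the same bound follows from a rescaled application of Lemma~\ref{5.5ab}, after noting that $V$ and $\widetilde V$ coincide up to the first hitting time of $0$.

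For the second summand I would set $N_j := \#\{i \in \{1,\ldots,\sigma_0^{\hV}\} : \hV_i \in [2^j, 2^{j+1})\}$; since $\hV_i \ge 1$ whenever $i < \sigma_0^{\hV}$, this yields $\sigma_0^{\hV} = \sum_{j \ge 0} N_j$. Integrating the geometric tail bound of Lemma~\ref{6.1} with $x = 2^j$ over $k$ gives $E_n[N_j] \le 2^{j+1}/(1-c_{\ref{c4}})$. On the event $\{\tau_{Cn}^{\hV} \ge \sigma_0^{\hV}\}$, however, only scales $j$ with $2^j < Cn$ can contribute, so
\[
E_n\!\left[\sigma_0^{\hV}\, \I\!\left\{\tau_{Cn}^{\hV} \ge \sigma_0^{\hV}\right\}\right] \le \sum_{j:\ 2^j < Cn} \frac{2^{j+1}}{1 - c_{\ref{c4}}} \le \frac{4Cn}{1 - c_{\ref{c4}}},
\]
and Markov's inequality then bounds the second summand above by $4C/((1-c_{\ref{c4}})c)$, which is $< \eps/2$ once $c = c_{\ref{ee}}(\eps)$ is taken large enough in terms of $C$ and $c_{\ref{c4}}$.

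I do not expect a serious obstacle, since both key ingredients are already in place: Lemma~\ref{6.1} supplies the geometric tail on visits to dyadic intervals, and Lemma~\ref{5.5c} together with its $\delta<1$ counterpart controls the running maximum of $\hV$. What makes the argument work is precisely the truncation at level $Cn$: without it the geometric sum $\sum_j 2^j$ would diverge, and indeed for $1 < \delta \le 2$ one expects $E_n[\sigma_0^{\hV}]$ itself to be infinite, so capping the running maximum before taking expectations is the one indispensable step.
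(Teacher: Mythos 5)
Your proposal is correct and takes essentially the same approach as the paper's: the paper defers to \cite[Prop.\ 7.1]{KM11} and explicitly lists Lemmas~\ref{6.1} and~\ref{5.5c} as the two ingredients, which are exactly the dyadic occupation-time bound and the running-maximum control you use. One small bookkeeping nit: $\sum_j N_j$ equals $\si_0^{\hV}-1$ rather than $\si_0^{\hV}$ (the terminal index $i=\si_0^{\hV}$ has $\hV_i\le 0$ and lands in no dyadic block), so the Markov bound picks up an extra $1/(cn)\le 1/c$, absorbed by taking $c$ slightly larger; and for $\delta<1$ you can invoke the first assertion of Lemma~\ref{5.5ab} directly for $V=\hV$ without passing through $\widetilde V$.
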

\begin{proof} The proof is the same as the one of \cite[Prop. 7.1]{KM11}. It uses Lemmas \ref{6.1} and \ref{5.5c} instead of Proposition~6.1 and (5.5) of \cite{KM11} respectively.  
\end{proof}
\begin{proof}[Proof of Theorem \ref{cond}]
We first prove (\ref{cconv1}).
For $\delta<1$, (\ref{cconv1}) follows from Theorem \ref{uncond} since $\hV=V$.

Now assume $\delta>1$.
By \cite[Ch.\ 4, Th.\ 2.12]{EK86}
it suffices to show that the one-dimensional marginal distributions converge, i.e.\ that for all $t>0$,
\begin{equation}\label{1st}
\hY_n\left(t\right)\Longrightarrow \hY\left(t\wedge \si_0^{\hY}\right)\qquad\mbox{as $n\to\infty$}.
\end{equation} 
(This theorem is applicable since the semigroup corresponding to $\hY(\cdot\wedge\si_0^{\hY})$ is Feller on the space of continuous functions on $[0,\infty)$ vanishing at infinity, see
\cite[Ch.\ 8, Th.\ 1.1, Cor.\ 1.2]{EK86}.)

For the proof of  (\ref{1st}) let us assume for the moment that we have already shown that for all
$t,\eps>0$,
\begin{equation}\label{ep}
\hY_n\left(t\wedge\si_\eps^{\hY_n}\right)\Longrightarrow \hY\left(t\wedge \si_\eps^{\hY}\right)\qquad\mbox{as $n\to\infty$}.
\end{equation} 
Since $\hY$ has continuous trajectories we have for all $t>0$,
$\hY\left(t\wedge \si_\eps^{\hY}\right)
\Rightarrow
\hY\left(t\wedge \si_0^{\hY}\right)$ as $\eps\searrow 0$.
Moreover, for all $\eta>0$,
\[
\lim_{\eps\searrow 0}\limsup_{n\to\infty}P\left[\left|\hY_n\left(t\wedge\si_\eps^{\hY_n}\right)-\hY_n(t)\right|>\eta\right]\le
\lim_{\eps\searrow 0}\limsup_{n\to\infty}
P_{\lfloor n\eps\rfloor}\left[\tau_{n\eta}^{\hV}<\si_0^{\hV}\right]
=0
\]
due to Lemma \ref{5.5c}. Therefore, we can apply Lemma \ref{billy} and obtain (\ref{1st}).

It remains to verify (\ref{ep}). Fix. $t,\eps>0$.
We need to show that for any bounded and continuous function $f:\R\to\R$, 
\begin{equation}\label{im}
\lim_{n\to\infty}
E\left[f\left(\hY_n\left(t\wedge\si_\eps^{\hY_n}\right)\right)\right]= E\left[f\left(\hY\left(t\wedge \si_\eps^{\hY}\right)\right)\right].
\end{equation}
By (\ref{ht}),
\[
E\left[f\left(\hY_n\left(t\wedge\si_\eps^{\hY_n}\right)\right)\right]=
E\left[f\left(Y_n\left(t\wedge\si_\eps^{Y_n}\right)\right)\frac{h\left(nY_n\left(t\wedge\si_\eps^{Y_n}\right)\right)}{h(\lfloor xn\rfloor)}\right]= A_n+B_n,
\]
where
\begin{eqnarray*}
A_n&:=&E\left[f\left(Y_n\left(t\wedge\si_\eps^{Y_n}\right)\right)\left(\frac{h\left(nY_n\left(t\wedge\si_\eps^{Y_n}\right)\right)}{h(\lfloor xn\rfloor)}-\left(\frac{x}{Y_n\left(t\wedge\si_\eps^{Y_n}\right)\vee\eps/2}\right)^{\delta-1}\right)\right],\\
B_n&:=&x^{\delta-1}E\left[f\left(Y_n\left(t\wedge\si_\eps^{Y_n}\right)\right)\left(Y_n\left(t\wedge\si_\eps^{Y_n}\right)\vee\eps/2\right)^{1-\delta}\right].
\end{eqnarray*}
Note that by Theorem \ref{uncond}, Lemma \ref{cont}, and the continuous mapping theorem, $Y_n(t\wedge\si_\eps^{Y_n})$ converges in distribution as $n\to\infty$ to 
$\varphi_\eps(t)
$, where $\varphi_\eps:=Y(\cdot\wedge\si_\eps^{Y})$ and $Y$ solves (\ref{bes}) with initial condition $Y(0)=x$.
Hence, since the function $g(x):=f(x)(x\vee\eps/2)^{1-\delta}$ is bounded and continuous,
\begin{eqnarray}\nonumber
\lim_{n\to\infty}B_n&=& x^{\delta-1}E\left[f\left(\varphi_\eps(t)\right)\left(\varphi_\eps(t)\vee\eps/2\right)^{1-\delta}\right]
\ =\ x^{\delta-1}E\left[f\left(\varphi_\eps(t)\right)\left(\varphi_\eps(t)\right)^{1-\delta}\right]\nonumber
\\
 &=& E\left[f\left(\hY\left(t\wedge\si_\eps^{\hY}\right)\right)\right],
\label{vhs}
\end{eqnarray}
where the last identity follows from a change of measure as follows. Since $Y$ solves (\ref{bes})  with $Y(0)=x$, $\varphi_\eps$ 
solves 
\begin{equation}\label{bos}
d\varphi_\eps(s)=\delta\I_{\varphi_\eps(s)>\eps}\, ds+\sqrt{2\varphi_\eps(s)}\I_{\varphi_\eps(s)>\eps}\, dB(s), \quad \varphi_\eps(0)=x,
\end{equation}
see e.g.\ \cite[Prop.\ II-1.1 (iv)]{IW89}.
By It\^o's formula, 
using that $\varphi_\eps\ge \eps$,
\begin{align*}
  d\ln \varphi_\eps(s)&=\frac1{\varphi_\eps(s)}\,d\varphi_\eps(s)-\frac12\,\frac{1}{\varphi_\eps^2(s)}\, \left(d\varphi_\eps(s)\right)^2
  \\&\stackrel{(\ref{bos})}{=}\frac{\I_{\varphi_\eps(s)>\eps}}{\varphi_\eps(s)}\left(\delta\,ds+\sqrt{2\varphi_\eps(s)})\,dB(s)\right)
  -\frac{\I_{\varphi_\eps(s)>\eps}}{\varphi_\eps(s)}\,ds \\&=
\frac{\delta-1}{\varphi_\eps(s)}\I_{\varphi_\eps(s)>\eps}\,ds+\sqrt{\frac{2}{\varphi_\eps(s)}}\I_{\varphi_\eps(s)>\eps}\,dB(s).
\end{align*}
Therefore,
\begin{align*}
Z_\eps(t)&:=\exp\left((1-\delta)\left(\int_0^{t}\frac{\delta-1}{\varphi_\eps(s)}\I_{\varphi_\eps(s)>\eps}\,ds
+\int_0^{t}\sqrt{\frac{2}{\varphi_\eps(s)}}\I_{\varphi_\eps(s)>\eps}\,dB(s)\right)\right)\\
&=\exp\left((1-\delta)\left(\ln \varphi_\eps(t)-\ln
  x\right)\right)\ =\ x^{\delta-1}\left(\varphi_\eps(t)\right)^{1-\delta}.
\end{align*}
Now consider the measure $\widetilde P_\eps$ with $d\widetilde P_\eps/d P=Z_\eps(t)$. 
By Girsanov's transformation, see e.g.\
\cite[Th.\ IV-4.2]{IW89} with $\al(t,x):=\sqrt{2x}\I_{x>\eps}$, $\beta(t,x):=\delta\I_{x>\eps}$ and $\ga(t,x):=(1-\delta)\sqrt{2/x}\I_{x>\eps}$, the process $(\varphi_\eps(s))_{0\le s\le t}$ satisfies
\[d\varphi_\eps(s)=(2-\delta)\I_{\varphi_\eps(s)>\eps}\ ds +\sqrt{2\varphi_\eps(s)}\I_{\varphi_\eps(s)>\eps}\, d\widetilde B(s),\quad \varphi_\eps(0)=x,\]
where $\widetilde B$ is a standard Brownian motion w.r.t.\ $\widetilde P_\eps$. Hence $\varphi_\eps(t)$ has under $\widetilde P$ the same distribution as $\hY(t\wedge \si_\eps^{\hY})$ under $P$. This implies  (\ref{vhs}).

For the proof of (\ref{im}) it remains to show that $A_n\to 0$ as $n\to\infty$. Let $\kappa:=\|f\|_\infty$ and abbreviate $\varphi_{n,\eps}:=Y_n\left(t\wedge\si_\eps^{Y_n}\right)$. Then 
\begin{eqnarray}\nonumber
|A_n|&\le&\kappa E\left[\left|\frac{h\left(n\varphi_{n,\eps}\right)}{h(\lfloor xn\rfloor)}-\left(\frac{x}{\varphi_{n,\eps}\vee\eps/2}\right)^{\delta-1}\right|\right],\\
&=&\kappa x^{\delta-1}E\left[\left|\frac{h\left(n\varphi_{n,\eps}\right)\left(\left({\varphi_{n,\eps}\vee\eps/2}\right)n\right)^{\delta-1}-(xn)^{\delta-1}h(\lfloor xn\rfloor)}{(xn)^{\delta-1}h(\lfloor xn\rfloor)\left(\varphi_{n,\eps}\vee\eps/2\right)^{\delta-1}}\right|\right],\nonumber\\
&\le&\kappa \left(\frac{2x}{\eps}\right)^{\delta-1}E\left[\left|\frac{h\left(n\varphi_{n,\eps}\right)\left(\left({\varphi_{n,\eps}\vee\eps/2}\right)n\right)^{\delta-1}-(xn)^{\delta-1}h(\lfloor xn\rfloor)}{(xn)^{\delta-1}h(\lfloor xn\rfloor)}\right|\right].\label{wurst}
\end{eqnarray}
Now let $\ga\in(0,c_{\ref{hh}}/2)$, where $c_{\ref{hh}}$ is the constant from Proposition \ref{har}. By Proposition  \ref{har} there is $n_0\in\N$ such that 
\begin{equation}\label{wag}
|s^{\delta-1}h(\lfloor s\rfloor)-c_{\ref{hh}}|<\ga\quad\mbox{ for all $s\in[n_0,\infty)$.}
\end{equation} 
Thus for  large enough $n$ 
the right-hand side of (\ref{wurst}) is less than or equal to
\begin{eqnarray*}
&&\frac\kappa{c_{\ref{hh}}-\ga} \left(\frac{2x}{\eps}\right)^{\delta-1}E\left[\left|h\left(n\varphi_{n,\eps}\right)\left(\left({\varphi_{n,\eps}\vee\eps/2}\right)n\right)^{\delta-1}-(xn)^{\delta-1}h(\lfloor xn\rfloor)\right|\right]\\
&\le&\frac{2\kappa}{c_{\ref{hh}}} \left(\frac{2x}{\eps}\right)^{\delta-1} \left(E\left[\left|h\left(n\varphi_{n,\eps}\right)\left(\left({\varphi_{n,\eps}\vee\eps/2}\right)n\right)^{\delta-1}-c_{\ref{hh}}\right|\right]+\ga\right)\\
&=&\frac{2\kappa}{c_{\ref{hh}}} \left(\frac{2x}{\eps}\right)^{\delta-1}
\bigg(
E\left[\left|h\left(n\varphi_{n,\eps}\right)\left({\varphi_{n,\eps}}n\right)^{\delta-1}-c_{\ref{hh}}\right|,\varphi_{n,\eps}\ge \eps/2\right]\nonumber\\
&&\hspace*{26mm}E\left[\left|h\left(n\varphi_{n,\eps}\right)\left(\eps n/2\right)^{\delta-1}-c_{\ref{hh}}\right|,\varphi_{n,\eps}< \eps/2\right]+\ga\bigg)\\
&\le&\frac{2\kappa}{c_{\ref{hh}}} \left(\frac{2x}{\eps}\right)^{\delta-1}
\left(\ga+\left(\left(\eps n/2\right)^{\delta-1}+c_{\ref{hh}}\right)P\left[\varphi_{n,\eps}< \eps/2\right]+\ga\right)\ \le\
\frac{6\ga \kappa}{c_{\ref{hh}}} \left(\frac{2x}{\eps}\right)^{\delta-1}
\end{eqnarray*}
Here we used 
in the last step that   $P\left[\varphi_{n,\eps}< \eps/2\right]$ decays exponentially fast as $n\to\infty$ due to (\ref{under}).
Letting $\ga\searrow 0$ yields $\lim_{n}A_n=0$. This proves (\ref{cconv1}).

For the next statement, (\ref{cconv2}), we shall use  Lemma \ref{billy}. It follows from 
(\ref{cconv1}), Lemma \ref{cont}, the continuous mapping theorem, and the a.s.\ continuity of $\hY$ that $\si_{1/k}^{\hY_n}\begin{array}[t]{c}\Longrightarrow\vspace*{-3mm}\\{\scriptstyle n}\end{array}\si_{1/k}^{\hY}\begin{array}[t]{c}\Longrightarrow\vspace*{-3mm}\\{\scriptstyle k}\end{array}\si_0^{\hY}$.
To verify the condition corresponding to (\ref{jon})
fix $\eps>0$ and let $c_{\ref{ee}}(\cdot)$ be the function of Lemma \ref{7.1}. Choose $g:\N\to (0,\infty)$ such that $c_{\ref{ee}}(g(k))\le \eps k$ for all $k\in\N$ and $g(k)\to 0$ as $k\to\infty$. Then for all $k,n\in\N$,
\begin{eqnarray}\label{jaja}
P\left[\si_0^{\hY_n}-\si_{1/k}^{\hY_n}>\eps\right]&\leq& P_{\lfloor \frac nk\rfloor}\left[\si_0^{\hV}>\eps n\right]\\
&\leq& P_{\lfloor \frac nk\rfloor}\left[\si_0^{\hV}>c_{\ref{ee}}(g(k))\left\lfloor \frac nk\right\rfloor\right]\ \le\ g(k),\nonumber
\end{eqnarray}
due to Lemma \ref{7.1}. Letting first $n\to\infty$ and then $k\to\infty$ 
implies the condition corresponding to (\ref{jon}). Consequently, Lemma \ref{billy} shows (\ref{cconv2}).

The final statement, (\ref{cconv3}), is obtained similarly. As above, it follows from 
(\ref{cconv1}), Lemma \ref{cont}, the continuous mapping theorem, and the a.s.\ continuity of $Y$ that
\[\int_0^{\si_{1/k}^{\hY_n}}\hY_n(s)\ ds\begin{array}[t]{c}\Longrightarrow\vspace*{-3mm}\\{\scriptstyle n\to\infty}\end{array}
\int_0^{\si_{1/k}^{\hY}}\hY(s)\ ds
\begin{array}[t]{c}\Longrightarrow\vspace*{-3mm}\\{\scriptstyle k\to\infty}\end{array}
\int_0^{\si_0^{\hY}}\hY(s)\ ds.
\]
Moreover, for all $\eps\in (0,1)$,
\begin{eqnarray*}
\lefteqn{P\left[\int_{0}^{\si_0^{\hY_n}}\hY_n(s)\ ds-\int_{0}^{\si_{1/k}^{\hY_n}}\hY_n(s)\ ds>\eps\right]
\ =\ P\left[\int_{\si_{1/k}^{\hY_n}}^{\si_0^{\hY_n}}\hY_n(s)\ ds>\eps\right]}\\
&\le& P\left[\sup\left\{\hY_n(s):  \si_{1/k}^{\hY_n}\le s\le \si_{0}^{\hY_n}\right\}\left(\si_{0}^{\hY_n}-
\si_{1/k}^{\hY_n}\right)>\eps\right]\\
&\le&P\left[\sup\left\{\hY_n(s):  \si_{1/k}^{\hY_n}\le s\le \si_{0}^{\hY_n}\right\}>\eps\right]+P\left[\si_{0}^{\hY_n}-
\si_{1/k}^{\hY_n}>\eps\right]\\
&\le& P_{\lfloor \frac nk\rfloor}\left[\tau_{\eps n}^{\hV}<\si_0^{\hV}\right]+g(k)
\end{eqnarray*}
as in (\ref{jojo}) and due to (\ref{jaja}). 
Letting first $n\to\infty$ and then $k\to\infty$ this converges to 0 due to Lemma \ref{5.5c} and the choice of $g$. Having verified the assumptions of Lemma \ref{billy} statement (\ref{cconv3})  follows from this lemma.
\end{proof}
\section{Asymptotics of diffusions and branching processes}\label{star}
In this section we derive certain asymptotics of the conditioned BP $\hV$ from those of the approximating squared Bessel process as stated in Lemma \ref{sca}. Our goal is the following result:
\begin{theorem}\label{sun}
Let  $V=(V_n)_{n\ge 0}$ be defined as in {\rm (\ref{sj})} such that {\rm (\ref{ind})--(\ref{gg})} holds with $V_0=1$ and let $\delta\in\R\backslash\{1\}$.
Then there are $\con{v3c},\con{v5c}\in(0,\infty)$ such that 
\begin{eqnarray}\label{wer}
\lim_{n\to\infty}n^{|\delta-1|}\ P_1[\si_0^{\hV}>n]&=&c_{\ref{v3c}},\\
\lim_{n\to\infty}n^{|\delta-1|/2}\ P_1\left[\sum_{i=0}^{\si_0^{\hV}}\hV_i>n\right]&=&c_{\ref{v5c}}.
\label{wie}
\end{eqnarray}
\end{theorem}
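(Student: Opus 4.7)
The plan is to combine the diffusion approximation of Theorem \ref{cond} with the tail of $h$ (Proposition \ref{har}) via a strong Markov decomposition of $\hV$ at an intermediate hitting level, and to recover the advertised rates from the scaling of the limiting diffusion. For $\hY$ solving (\ref{bes2}) with drift $\overline\delta:=1-|\delta-1|<1$, the BESQ-type scaling $\hY|_x(\cdot)\stackrel{d}{=}x\,\hY|_1(\cdot/x)$ together with the known tails of entrance times of squared Bessel processes of dimension $2\overline\delta<2$ (Lemma \ref{sca} in the Appendix) give, as $x\searrow 0$,
\[
P_x\!\left[\sigma_0^{\hY}>1\right]\sim \widehat{C}_1\,x^{|\delta-1|},\qquad P_x\!\left[\int_0^{\sigma_0^{\hY}}\hY(s)\,ds>1\right]\sim \widehat{C}_2\,x^{|\delta-1|}
\]
for some positive constants $\widehat C_1,\widehat C_2$. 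These are the inputs I would take from the diffusion side.

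For (\ref{wer}) I would fix a small $\epsilon>0$ and decompose at $\tau_{\epsilon n}^{\hV}$:
\[
P_1\!\left[\sigma_0^{\hV}>n\right]\,=\,P_1\!\left[\tau_{\epsilon n}^{\hV}<\sigma_0^{\hV}\right]\,E_1\!\left[P_{\hV_{\tau_{\epsilon n}^{\hV}}}\!\left[\sigma_0^{\hV}>n-\tau_{\epsilon n}^{\hV}\right]\,\Big|\,\tau_{\epsilon n}^{\hV}<\sigma_0^{\hV}\right]+R_{n,\epsilon},
\]
where $R_{n,\epsilon}$ collects paths that die before ever reaching $\epsilon n$. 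For $\delta>1$ the Doob-transform identity (\ref{ht}) and Proposition \ref{har} give $P_1[\tau_{\epsilon n}^{\hV}<\sigma_0^{\hV}]=P_1[\tau_{\epsilon n}^V<\sigma_0^V]\,h(\lceil\epsilon n\rceil)/h(1)\sim \tfrac{1-h(1)}{h(1)}c_{\ref{hh}}(\epsilon n)^{-(\delta-1)}$ (using $P_1[\tau_{\epsilon n}^V<\sigma_0^V]\to 1-h(1)$, which holds since a surviving $V$ is supercritical and escapes to $\infty$). In the $\delta<1$ case, $\hV=V$ and the same rate $c(\epsilon n)^{-(1-\delta)}$ is the forward-BP analogue of the gambler-ruin estimate (cf.\ the proof of \cite[Lemma 8.1]{KM11}), obtained from Theorem \ref{uncond} together with the scale-function computation for $Y$ with drift $\delta<1$. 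Theorem \ref{cond} applied with $x_n\equiv\epsilon$ yields $P_{\lfloor\epsilon n\rfloor}[\sigma_0^{\hV}>n]\to P_\epsilon[\sigma_0^{\hY}>1]$; one additionally has to check that $\tau_{\epsilon n}^{\hV}/n$ is negligible (under the conditioning, the $h$-transform of BESQ$_{2\overline\delta}$ by its scale function is BESQ$_{4-2\overline\delta}$ of positive drift, so $\tau_{\epsilon n}^{\hV}=O(\epsilon n)$ in probability), and that the overshoot $\hV_{\tau_{\epsilon n}^{\hV}}-\epsilon n$ is of order $\sqrt{\epsilon n}$ by the fourth-moment bound in (\ref{a3}) and Lemma \ref{gld}; these let one replace the inner expectation by $P_{\lceil\epsilon n\rceil}[\sigma_0^{\hV}>n]$ with negligible error. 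Multiplying the two factors and using $P_\epsilon[\sigma_0^{\hY}>1]\sim\widehat C_1\epsilon^{|\delta-1|}$ as $\epsilon\searrow 0$, the powers of $\epsilon$ cancel and, after letting $\epsilon\searrow 0$, one obtains $n^{|\delta-1|}P_1[\sigma_0^{\hV}>n]\to c_{\ref{v3c}}=\tfrac{1-h(1)}{h(1)}c_{\ref{hh}}\widehat C_1$ (with $h(1)$ replaced by $1$ when $\delta<1$).

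For (\ref{wie}) the identical scheme with $m=\lfloor\epsilon\sqrt n\rfloor$ works. The escape probability is $P_1[\tau_{\epsilon\sqrt n}^{\hV}<\sigma_0^{\hV}]\sim\mathrm{const}\cdot(\epsilon\sqrt n)^{-|\delta-1|}$, while Theorem \ref{cond} applied at the time scale $N=\epsilon\sqrt n$ (with $x_N\equiv 1$) gives $N^{-2}\sum_{i\le\sigma_0^{\hV}}\hV_i\Rightarrow\int_0^{\sigma_0^{\hY}}\hY(s)\,ds$ under $P_N$ by (\ref{cconv3}). Hence the tail factor $P_N[\sum_i\hV_i>n]$ tends to $P_1[\int_0^{\sigma_0^{\hY}}\hY(s)\,ds>n/N^2]=P_1[\int>1/\epsilon^2]\sim\widehat C_2\epsilon^{|\delta-1|}$. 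The $\epsilon$-powers cancel again and produce the rate $n^{-|\delta-1|/2}$ together with an explicit formula for $c_{\ref{v5c}}$.

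The main obstacles I anticipate are, in order of difficulty: (i) showing that the exceptional term $R_{n,\epsilon}$ (paths dying before ever reaching $\epsilon n$, resp.\ $\epsilon\sqrt n$) is of strictly smaller order than the main term. This cannot be read off from the diffusion approximation — it concerns $\hV$ precisely in the regime the approximation does not see — and I would attack it by slicing $[0,\epsilon n)$ into dyadic shells $[2^k,2^{k+1})$ and iterating Lemma \ref{6.1} (number of sojourns) together with Lemma \ref{7.1} (time per sojourn) to obtain a super-polynomial decay in $\log n$. (ii) The matching upper bound: running the decomposition at $\tau_{Kn}^{\hV}$ for large $K$ and using Lemma \ref{5.5c} to show that paths overshooting this level contribute only $O(K^{-|\delta-1|})$, which is absorbed when $K\to\infty$. (iii) The uniformity needed to replace $P_{\hV_{\tau_{\epsilon n}^{\hV}}}[\cdot]$ by $P_{\lceil\epsilon n\rceil}[\cdot]$ in the limit, which reduces to continuity of $y\mapsto P_y[\sigma_0^{\hY}>1]$ at $y=\epsilon$; this is a consequence of the strong Markov property and the Feller continuity in the initial condition of $\hY(\cdot\wedge\sigma_0^{\hY})$ already used in the proof of Theorem \ref{cond}.
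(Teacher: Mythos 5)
Your proposal follows essentially the same route as the paper: decompose at $\tau_{xn}^{\hV}$, obtain the escape probability asymptotic from the Doob transform together with the polynomial decay of $h$ (Proposition~\ref{har}) --- precisely what the paper isolates as Lemma~\ref{C} --- then feed the starting level $\approx xn$ into Theorem~\ref{cond}, cancel the $x$-powers via BESQ scaling (Lemma~\ref{sca} and Lemma~\ref{3.35}), and control the ``never reaches $xn$'' / ``takes too long to reach $xn$'' contribution via the shell estimate of Lemma~\ref{6.1} (the paper packages this as Lemma~\ref{P6.2}); the same scheme with the square-root space scale and (\ref{cconv3}) gives (\ref{wie}).

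Two small corrections. First, your display $P_1[\tau_{\epsilon n}^{\hV}<\sigma_0^{\hV}]=P_1[\tau_{\epsilon n}^V<\sigma_0^V]\,h(\lceil\epsilon n\rceil)/h(1)$ cannot be an identity: the Doob-transform relation (\ref{ht}) evaluates $h$ at the actual position $V_{\tau_{\epsilon n}^V}$, which exceeds $\epsilon n$ by a random overshoot; Lemma~\ref{C} in the paper therefore sandwiches the escape probability using the overshoot bound (\ref{over}) and monotonicity of $h$, and you would need the same care. Second, your step~(ii) (decomposing at $\tau_{Kn}^{\hV}$ for large $K$ and letting $K\to\infty$) is not what the matching upper bound requires. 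Once the process has reached level $\approx xn$, Theorem~\ref{cond} already accounts for any subsequent excursions to arbitrarily high levels, so nothing needs to be truncated above. What \emph{does} need attention for the upper bound is the event $\{\tau_{xn}^{\hV}\wedge\sigma_0^{\hV}>\eps n\}$, i.e.\ the walk spending more than $\eps n$ steps below $xn$; this is precisely where Lemma~\ref{P6.2} enters (and it is proved by exactly the dyadic-shell iteration of Lemma~\ref{6.1} that you sketch in (i)), together with the overshoot bound (\ref{cos}) to keep $\hV_{\tau_{xn}^{\hV}}$ within a factor $1+\eps$ of $xn$. With those two repairs your outline reproduces the paper's argument and its constants $c_{\ref{v3c}}=c_{\ref{c5}}\,a(1-|\delta-1|)$ and $c_{\ref{v5c}}=c_{\ref{c5}}\,b(1-|\delta-1|)$.
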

\begin{remark}\label{we}{\rm In the special case described in Remark \ref{s1}
formula (\ref{wer}) follows for $0<\delta<1$ from the tail behavior of the extinction time of recurrent critical BPs with immigration described in \cite[Th.\  2, first part of (21)]{Zub72}. In this case it is for $\delta>1$ reminiscent of the unproven claims made in \cite[middle of page 225]{IS85} about the extinction time of transient critical BPs with immigration.
More general results concerning BPs with migration applying to the case $\delta\in(-\infty,1)\backslash\{0,-1\}$ are given in \cite[Theorems 1,3,4]{FYK90}, see also \cite[Th.\ A (iv)]{KZ08}.}
\end{remark}
\begin{lemma}\label{C}
Let $\delta\in\R\backslash\{1\}$. Then there is $\con{c5}\in(0,\infty)$ such that\\ $\lim_{n\to\infty}n^{|\delta-1|}P_1\left[\tau_n^{\hV}<\si_0^{\hV}\right]=c_{\ref{c5}}$.
\end{lemma}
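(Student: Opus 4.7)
The plan is to handle the regimes $\delta>1$ and $\delta<1$ separately. In both cases the exponent $|\delta-1|$ equals the index of the scale function $s(y)=y^{|\delta-1|}$ of the limiting diffusion $\hY$ (which has drift $\overline\delta=1-|\delta-1|<1$), and the exact identity $P_\ell^{\hY}[\tau_N^{\hY}<\si_0^{\hY}]=(\ell/N)^{|\delta-1|}$ serves as the quantitative target.

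For $\delta>1$, I would combine the Doob-transform identity of Remark \ref{vc} with the strong Markov property of $V$ at $\tau_n^V$ to write
\[
P_1[\tau_n^{\hV}<\si_0^{\hV}]\;=\;\frac{1}{h(1)}\,E_1\!\left[\I_{\tau_n^V<\si_0^V}\,h(V_{\tau_n^V})\right].
\]
The fourth-moment bound in (\ref{a3}) combined with Lemma \ref{gld} controls the jump of $V$ at the first crossing of level $n$, yielding $V_{\tau_n^V}/n\to 1$ in probability conditional on $\{\tau_n^V<\infty\}$. With Proposition \ref{har} this gives $n^{\delta-1}h(V_{\tau_n^V})\to c_{\ref{hh}}$ in probability on $\{\tau_n^V<\si_0^V\}$, and the monotonicity of $h$ furnishes the uniform domination $n^{\delta-1}h(V_{\tau_n^V})\le n^{\delta-1}h(n)\to c_{\ref{hh}}$. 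Since supercriticality forces $V\to\infty$ on $\{\si_0^V=\infty\}$ almost surely, $\I_{\tau_n^V<\si_0^V}\to\I_{\{\si_0^V=\infty\}}$ almost surely, and dominated convergence yields
\[
n^{\delta-1}P_1[\tau_n^{\hV}<\si_0^{\hV}]\ \longrightarrow\ c_{\ref{hh}}\,(1-h(1))/h(1).
\]

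For $\delta<1$, Corollary \ref{rc} gives $\hV=V$, and I would adapt the two-scale strategy used in the proof of Proposition \ref{har}. Fix $a\in(1,2]$ and use the strong Markov property at $\tau_{\lfloor a^\ell\rfloor}^V$ together with monotonicity of $z\mapsto P_z[\tau_{a^m}^V<\si_0^V]$ to bound $P_1[\tau_{a^m}^V<\si_0^V]$ above and below by $P_1[\tau_{\lfloor a^\ell\rfloor}^V<\si_0^V]\cdot a^{(\ell-m)(1-\delta)}$ times correction factors that tend to $1$ as first $m\to\infty$ and then $\ell\to\infty$. The required analog of Lemma \ref{5.3} for hitting from below is obtained from Theorem \ref{uncond}: at scale $a^m$ the rescaled chain converges to $Y$ with exact scale function $s(y)=y^{1-\delta}$, so $P_z[\tau_{a^m}^V<\si_0^V]\to(z/a^m)^{1-\delta}$ for $z/a^m$ in compact subsets of $(0,1)$, with the overshoot correction vanishing as $\ell\to\infty$ by the same jump estimate as in the case $\delta>1$. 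This shows $g(a):=\lim_{m\to\infty}a^{m(1-\delta)}P_1[\tau_{a^m}^V<\si_0^V]\in(0,\infty)$; sandwiching an arbitrary $n$ between consecutive powers of $a$ and letting $a\searrow 1$, exactly as in the last lines of the proof of Proposition \ref{har}, finishes the argument.

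The main obstacle is the sharp hitting-probability estimate invoked in Case $\delta<1$: one has to upgrade the functional limit of Theorem \ref{uncond} (valid when the starting point is comparable to the target level) into a two-scale asymptotic uniform for ratios in a compact subset of $(0,1)$, with correction factors tending to $1$ as the scale grows. In Case $\delta>1$ this type of delicate input has already been packaged in Proposition \ref{har}, so only the overshoot control and a dominated convergence argument remain.
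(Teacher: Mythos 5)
Your argument for $\delta>1$ is essentially the paper's: starting from the Doob-transform identity $P_1[\tau_n^{\hV}<\si_0^{\hV}]=h(1)^{-1}E_1\bigl[\I_{\tau_n^V<\si_0^V}\,h(V_{\tau_n^V})\bigr]$ and the strong Markov property at $\tau_n^V$, you control the overshoot $V_{\tau_n^V}-n$ and combine it with the asymptotics of $h$ from Proposition~\ref{har}. You phrase the passage to the limit via dominated convergence together with the almost-sure convergence $\I_{\tau_n^V<\si_0^V}\to\I_{\{\si_0^V=\infty\}}$, whereas the paper sandwiches with a parameter $\ga>1$ on the overshoot and then lets $\ga\searrow 1$; both produce the same constant $c_{\ref{hh}}(1-h(1))/h(1)$, and this half is fine.

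For $\delta<1$, however, the paper does not re-prove the statement at all; it simply invokes the forward-process version of \cite[(C), Lem.\ 8.1]{KM11} (cf.\ Remark~\ref{d1}). Your proposed two-scale re-derivation has the right overall shape, but the decisive input you attribute to Theorem~\ref{uncond} is not actually delivered by it. Theorem~\ref{uncond} is a functional limit theorem at a \emph{single} spatial scale: it yields $P_{\lfloor yn\rfloor}[\tau_n^V<\si_0^V]\to y^{1-\delta}$ for each fixed $y\in(0,1)$ as $n\to\infty$. In the two-scale argument you must estimate $P_z[\tau_{\lfloor a^m\rfloor}^V<\si_0^V]$ for $z$ of order $a^\ell$ with $\ell$ held fixed while $m\to\infty$; then $z/a^m\to 0$ escapes every compact subset of $(0,1)$, and what is required is a multiplicative approximation with an error that is controlled uniformly over all $m>\ell$ and tends to $1$ only as $\ell\to\infty$. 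A weak-convergence statement at a single scale cannot, by itself, yield such a uniform bound over an unbounded range of scales; this is exactly what the level-by-level chaining in Lemma~\ref{5.2} and Lemma~\ref{5.3} (equivalently, \cite[Lemmas 5.2, 5.3, 8.1]{KM11}) is built to do, passing from $a^{r-1}$ to $a^{r+1}$ one $r$ at a time with errors summable in $r$. You correctly flag this as the ``main obstacle,'' but your sketch leaves it unresolved, so the $\delta<1$ half of the lemma is not established by the proposal as written.
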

\begin{proof} For $\delta<1$ the statement follows from  \cite[(C), Lem.\ 8.1]{KM11}.

Now let $\delta>1$. Then by the strong Markov property and monotonicity of $h$,
\[\limsup_{n\to\infty}n^{\delta-1}P_1\left[\tau_n^{\hV}<\si_0^{\hV}\right]
\le \limsup_{n\to\infty}n^{\delta-1}\frac{P_1\left[\tau_n^V<\infty\right]h(n)}{h(1)}=\frac{1-h(1)}{h(1)}c_{\ref{hh}}=:c_{\ref{c5}}\]
due to Proposition \ref{har}.
On the other hand, for all $\ga>1$ by the strong Markov property and monotonicity of $h$,
\begin{eqnarray*}
\lefteqn{\liminf_{n\to\infty}n^{\delta-1}P_1\left[\tau_n^{\hV}<\si_0^{\hV}\right]
\ge \liminf_{n\to\infty}n^{\delta-1}\frac{P_1\left[\tau_n^{V}<\si_0^{V}<\infty, V_{\tau_n^V}\le \ga n\right]}{h(1)}}\\
&\ge & \liminf_{n\to\infty}n^{\delta-1}\frac{P_1\left[\tau_n^{V}<\infty, V_{\tau_n^V}\le \ga n\right]h(\lfloor\ga n\rfloor)}{h(1)}\ =\ \frac{1-h(1)}{h(1)}c_{\ref{hh}}\ga^{1-\delta}\begin{array}[t]{c}\longrightarrow\vspace*{-3mm}\\{\scriptstyle \ga\searrow 1}\end{array}c_{\ref{c5}},
\end{eqnarray*}
where we used in the last identity (\ref{over}) and Proposition \ref{har}.
\end{proof}
\begin{lemma}\label{P6.2}
Let $\delta\in\R\backslash\{1\}$. Then for every $\eps>0$,
\[\lim_{x\searrow 0}\limsup_{n\to\infty}\, n^{|\delta-1|}\, P_1\left[\#\left\{i\in\{1,\ldots,\si_0^{\hV}\}: \hV_i< xn\right\}>\eps n\right]=0.\]
\end{lemma}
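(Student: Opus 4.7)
The plan is a dyadic decomposition of the range $[1, xn)$ combined with the exponential tail bound of Lemma~\ref{6.1} and the reaching-probability bound from Lemma~\ref{C}. Define $\alpha_j := 2^{-j}xn$ for $j \geq 0$ and let
\[
N_j := \#\bigl\{i \in \{1,\ldots,\sigma_0^{\hV}\} : \hV_i \in [\alpha_{j+1},\alpha_j)\bigr\},
\]
so that $\#\{i:\hV_i<xn\}\le \sum_{j\ge 0} N_j$. Lemma~\ref{6.1} applied to the block $[\lceil\alpha_{j+1}\rceil, 2\lceil\alpha_{j+1}\rceil)\supset[\alpha_{j+1},\alpha_j)$ yields
\[
P_1[N_j > \alpha_j k]\le p_j\, c_{\ref{c4}}^k,\qquad k\in\N,
\]
where $p_j \le P_1[\tau_{\lceil\alpha_{j+1}\rceil}^{\hV}<\sigma_0^{\hV}]$. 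Lemma~\ref{C}, supplemented by the trivial bound $P_1[\tau_m^{\hV}<\sigma_0^{\hV}]\le 1$ for small $m$, gives a uniform estimate $p_j \le C\alpha_{j+1}^{-|\delta-1|}$ whenever $\alpha_{j+1}\ge 1$.

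Next, fix $a\in(1/2,1)$, set $c_j:=(1-a)a^j$ (so $\sum_j c_j=1$), and apply a weighted union bound:
\[
P_1\Bigl[\sum_{j\ge 0} N_j > \epsilon n\Bigr]
\le \sum_{j\ge 0} P_1[N_j > c_j\epsilon n]
\le \sum_{j\ge 0} p_j\, c_{\ref{c4}}^{\lfloor c_j\epsilon n/\alpha_j\rfloor}.
\]
Since $c_j\epsilon n/\alpha_j = (1-a)(2a)^j\epsilon/x$, setting $\beta:=(1-a)\epsilon/x$ and combining with the bound on $p_j$ yields
\[
n^{|\delta-1|}P_1\Bigl[\sum_j N_j>\epsilon n\Bigr]\le C'x^{-|\delta-1|}\sum_{j\ge 0}2^{j|\delta-1|}c_{\ref{c4}}^{\beta(2a)^j-1}.
\]
Since $2a>1$, the ratio between consecutive summands is at most $2^{|\delta-1|}c_{\ref{c4}}^{\beta(2a-1)}$, which tends to $0$ as $\beta\to\infty$; hence for $\beta$ large the sum is bounded by a multiple of its $j=0$ term $c_{\ref{c4}}^{\beta-1}$. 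Thus
\[
n^{|\delta-1|}P_1\bigl[\#\{i\le\sigma_0^{\hV}:\hV_i<xn\}>\epsilon n\bigr]\le C''x^{-|\delta-1|}c_{\ref{c4}}^{(1-a)\epsilon/x-1},
\]
which vanishes as $x\searrow 0$ since exponential decay in $1/x$ dominates any polynomial prefactor.

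The main obstacle is tuning the union-bound weights so that the tail exponents $c_j \epsilon n/\alpha_j = (1-a)(2a)^j\epsilon/x$ grow geometrically in $j$ with a base strictly greater than $1$. A uniform choice $c_j=1/(J+1)$ with $J\sim\log(xn)$ produces only logarithmic-in-$n$ tail exponents and fails once $|\delta-1|\ge 1$, where direct application of Markov's inequality to $E_1[\sum_j N_j]$ is already insufficient. The geometric choice with $a>1/2$ is the minimal sharpening that forces $(2a)^j$ to dominate the polynomial prefactor $2^{j|\delta-1|}$ for every $\delta\ne 1$. Technical bookkeeping at the low-$\alpha_j$ end (where $\alpha_{j+1}<1$ and Lemma~\ref{6.1}'s integrality is enforced by rounding) adds only $O(1)$ blocks, whose contribution is absorbed into constants.
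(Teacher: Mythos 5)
Your proof is correct and follows the same dyadic-block strategy as the reference proof in \cite{KM11} (Prop.\ 6.2), which the paper cites verbatim: decompose $[1,xn)$ into blocks $[\alpha_{j+1},\alpha_j)$, bound the occupation of each block via Lemma~\ref{6.1}, control the prefactor $p_j$ via Lemma~\ref{C} (together with the trivial bound for small levels), and apply a union bound with geometrically decreasing weights. One small imprecision: the set containment $[\lceil\alpha_{j+1}\rceil,2\lceil\alpha_{j+1}\rceil)\supset[\alpha_{j+1},\alpha_j)$ is false for non-integer $\alpha_{j+1}$ (the left endpoints disagree), and the displayed bound $P_1[N_j>\alpha_j k]\le p_j c_{\ref{c4}}^{k}$ is not literally what Lemma~\ref{6.1} gives --- the lemma gives the bound at threshold $2\lceil\alpha_{j+1}\rceil k\ge\alpha_j k$. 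Both are fixed by noting $\hV$ is integer-valued (so the integers captured are the same) and by using $2\lceil\alpha_{j+1}\rceil\le 2\alpha_j$ for $\alpha_{j+1}\ge1$, which only replaces $\beta$ by $\beta/2$ downstream and affects nothing. Your observation that the weight ratio $a$ must strictly exceed $1/2$ so that $(2a)^j$ dominates the polynomial prefactor is exactly the crux.
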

\begin{proof}
The proof is the same as the one of \cite[Prop. 6.2]{KM11}. It uses Lemmas \ref{6.1} and \ref{C} instead of Proposition~6.1 and (5.4) of \cite{KM11}, respectively. 
\end{proof}
\begin{proof}[Proof of Theorem \ref{sun}.] 
We first prove (\ref{wer}) with $c_{\ref{v3c}}:=c_{\ref{c5}}a(1-|\delta-1|)$, where $c_{\ref{c5}}$ is as in Lemma \ref{C} and the function $a$ is as in Lemma \ref{3.35}.
The proof generalizes the one of \cite[Th.\ 2.1]{KM11}, which covers the case $\delta<1$. Let $A_n:=\{\si_0^{\hV}>n\}$ for $n\ge 0$.

\textit{Lower bound in} (\ref{wer}).
Fix $x>0$. Then by the strong Markov property for $\hV$ we have for all $n\ge 0$,
\begin{eqnarray}
n^{|\delta-1|}P_1\left[A_n\right] &\ge& n^{|\delta-1|}P_1\left[\si_0^{\hV}>n+\tau_{xn}^{\hV},\hV_{\tau_{xn}^{\hV}}\le 2 xn\right]\nonumber\\
&=&n^{|\delta-1|}E_1\left[P_{\hV_{\tau_{xn}^{\hV}}}[A_n],\ \si_0^{\hV}>\tau_{xn}^{\hV}, \hV_{\tau_{xn}^{\hV}}\le 2 xn\right]\label{bb}\\
&\ge &n^{|\delta-1|}P_1\left[\si_0^{\hV}>\tau_{xn}^{\hV},\hV_{\tau_{xn}^{\hV}}\le 2 xn\right]\inf_{y\in[x,2 x]}P_{\lfloor  yn\rfloor}\left[
A_n\right].\label{bb2}
\end{eqnarray}
Now we 
choose suitable $y_n\in[x,2 x]$ where the infimum in (\ref{bb2}) is attained. Then the expression in (\ref{bb2}) can be estimated from below by
\[
\Big((xn)^{|\delta-1|}P_1\left[\si_0^{\hV}>\tau_{xn}^{\hV}\right]
\Big) x^{-|\delta-1|}P_{\lfloor  y_nn\rfloor}\left[A_n\right]\\
-\ n^{|\delta-1|}P_1\left[\tau_{xn}^{\hV}<\infty, \hV_{\tau_{xn}^{\hV}}> 2 xn\right].
\]
 The second term above vanishes as $n\to\infty$ due to (\ref{cos}). 
Therefore, by Lemma \ref{C}, 
 for all $x>0$,
\begin{eqnarray}
\liminf_{n\to\infty}n^{|\delta-1|}P_1\left[A_n\right]
&\ge&\nonumber
c_{\ref{c5}}x^{-|\delta-1|}\liminf_{n\to\infty}P_{\lfloor  y_nn\rfloor}\left[A_n\right]\\
&=&
c_{\ref{c5}}x^{-|\delta-1|}\lim_{k\to\infty}P_{\lfloor  y_{n_k}n_k\rfloor}\left[A_{n_k}\right]\label{kuh}
\end{eqnarray}
for some increasing sequence $(n_k)_{k\ge 0}$. Choose a 
 subsequence $(m_k)_{k\ge 0}$ of  $(n_k)_{k\ge 0}$ along which $(y_{m_k})_{k\ge 0}$ converges to
some $y\in[x,2x]$.  
Then  by (\ref{cconv2}), the limit in (\ref{kuh}) is  equal to
 $P_y[\si_0^{\hY}>1]$
where $\hY$ solves the SDE (\ref{bes2}).
By scaling (Lemma \ref{sca}), 
\[P_y[\si_0^{\hY}>1]=P_1[\si_0^{\hY}>1/y]\ge P_1[\si_0^{\hY}>1/x]\sim x^{|\delta-1|}a(1-|\delta-1|)
\] as $x\searrow 0$ by  (\ref{L3.3}).
This proves the lower bound in (\ref{wer}).

\textit{Upper bound in} (\ref{wer}). Fix $\eps \in (0,1)$ and estimate for all  $x>0$ and $n\ge 0$,
\begin{eqnarray}n^{|\delta-1|}P_1\left[A_n\right]
&\le& n^{|\delta-1|}P_1\left[A_n, \tau_{xn}^{\hV}\le \eps n\right]+n^{|\delta-1|}P_1\left[\tau_{xn}^{\hV}\wedge \si_0^{\hV}>\eps n\right]\label{wo0}\\
&\le&n^{|\delta-1|}
P_1\left[A_n, \tau_{xn}^{\hV}\le \eps n,\ \hV_{\tau_{xn}^{\hV}}\le (1+\eps )x n\right]\label{wo1}\\
&&+\ n^{|\delta-1|}P_1\left[\tau_{xn}^{\hV}<\infty, \hV_{\tau_{xn}^{\hV}}> (1+\eps )x n\right]\label{wo2}\\
&&+\ n^{|\delta-1|}P_1\left[\#\left\{i\in\{1,\ldots,\si_0^{\hV}\}: \hV_i< xn\right\}>\eps n\right].
\label{wo3}
\end{eqnarray}
The expression in (\ref{wo2}) vanishes  as $n\to\infty$ due to  (\ref{cos}). 
The term in (\ref{wo3}) vanishes as well  due to Lemma \ref{P6.2}
if we let first $n\to\infty$ and then $x\searrow 0$. 
For the treatment of (\ref{wo1}) let $B_n:=\left\{\si_0^{\hV}>(1-\eps )n\right\}$ for $n\ge 0$. Then  by the strong Markov property the quantity in (\ref{wo1}) is less than or equal to
\begin{eqnarray}\label{nana}
&&n^{|\delta-1|}
E_1\left[P_{\tau_{xn}^{\hV}}[B_n], \tau_{xn}^{\hV}<\si_0^{\hV},\ \hV_{\tau_{xn}^{\hV}}\le (1+\eps )x n\right]\\
&\le&
n^{|\delta-1|}P_1\left[\tau_{xn}^{\hV}<\si_0^{\hV}\right]\ \sup_{y\in[x,(1+\eps )x]}P_{\lfloor yn\rfloor}\left[B_n\right].\nonumber
\end{eqnarray}
By choosing suitable $y_n\in[x,(1+\eps ) x]$ where the supremum in (\ref{nana}) is attained the expression in (\ref{nana}) can be written as
\[\Big((xn)^{|\delta-1|}P_1\left[\tau_{xn}^{\hV}<\si_0^{\hV}\right]\Big) x^{-|\delta-1|}P_{\lfloor y_{n}(x)\,n\rfloor}\left[B_n\right].\]
As above the first factor converges to $c_{\ref{c5}}$ as $n\to\infty$ by Lemma \ref{C}. 
Summarizing we get that for all $\eps \in(0,1),$
\begin{eqnarray}\limsup_{n\to\infty}n^{|\delta-1|}P_1\left[A_n\right]
&\le&c_{\ref{c5}}\limsup_{x\searrow 0}\, x^{-|\delta-1|}\limsup_{n\to\infty}\,  P_{\lfloor y_{n}(x)\,n\rfloor}\left[B_n\right]\nonumber\\
&=&c_{\ref{c5}}\limsup_{x\searrow 0}\, x^{-|\delta-1|}\lim_{k\to\infty}\,  P_{\lfloor y_{n_{k,x}}(x)\,n_{k,x}\rfloor}\left[B_{n_{k,x}}\right]\label{glu}
\end{eqnarray}
for suitable increasing sequences $(n_{k,x})_{k\ge 0}$, $x>0$. Choose for all $x>0$ a subsequence $(m_{k,x})_{k\ge 0}$ of $(n_{k,x})_{k\ge 0}$ along which  $(y_{m_{k,x}}(x))_{k\ge 0}$ converges to some $y(x)\in[x,(1+\eps )x]$ and let $\hY$ solve the SDE (\ref{bes2}). Then by (\ref{cconv2}), the expression in (\ref{glu}) is equal to 
\begin{eqnarray*}
&&c_{\ref{c5}}\limsup_{x\searrow 0}\, x^{-|\delta-1|}
P_{y(x)}\left[\si_0^{\hY}>1-\eps \right]
 \stackrel{\rm L.\ \ref{sca}}{=}
c_{\ref{c5}}\limsup_{x\searrow 0}x^{-|\delta-1|}P_{1}\left[\si_0^{\hY}>\frac{1-\eps }{y(x)}\right]\\
&\le& c_{\ref{c5}}\limsup_{x\searrow 0}x^{-|\delta-1|}P_{1}\left[\si_0^{\hY}>\frac{1-\eps }{(1+\eps )x}\right]
\ \stackrel{(\ref{L3.3})}{=}\ c_{\ref{v3c}}\left(\frac{1+\eps }{1-\eps }\right)^{|\delta-1|}
\begin{array}[t]{c}\longrightarrow\vspace*{-2mm}\\{\scriptstyle \eps\searrow 0}\end{array}
c_{\ref{v3c}}.
\end{eqnarray*}
This completes the proof of (\ref{wer}).\vspace*{2mm}

Now we turn to the proof of (\ref{wie}) with $c_{\ref{v5c}}:=c_{\ref{c5}}b(1-|\delta-1|)$, where $c_{\ref{c5}}$ is as in Lemma \ref{C} and the function $b$ is as in Lemma \ref{3.35}.
The proof is similar to the one of \cite[Lemma 4.1]{KM11}.
It is enough to consider convergence along the subsequence $(n^2)_{n\ge 0}$. 
Moreover, it suffices to show 
\begin{equation}\label{xn}
\lim_{x\searrow 0}\lim_{n\to\infty}n^{|\delta-1|}\ P_1\left[A_n,\ \tau_{xn}^{\hV}<\si_0^{\hV} \right]=c_{\ref{v5c}},\quad\mbox{where}\ A_n:=\Bigg\{\sum_{i=0}^{\si_0^{\hV}}\hV_i>n^2\Bigg\}.
\end{equation}
Indeed, for all $x>0$ due to (\ref{wer})
\begin{eqnarray*}
n^{|\delta-1|}\ P_1\left[A_n,\ \si_0^{\hV}<\tau_{xn}^{\hV} \right]\le
n^{|\delta-1|}\ P_1\left[xn\si_0^{\hV}>n^2\right]
\begin{array}[t]{c}\longrightarrow\vspace*{-2mm}\\{\scriptstyle n\to\infty}\end{array}
c_{\ref{v3c}}x^{|\delta-1|}
\begin{array}[t]{c}\longrightarrow\vspace*{-2mm}\\{\scriptstyle x\searrow 0}\end{array}0.
\end{eqnarray*}

\textit{Lower bound in} (\ref{xn}). 
Fix $x>0$. Then 
\begin{equation}\label{term}
n^{|\delta-1|}\ P_1\left[A_n,\ \tau_{{xn}}^{\hV}<\si_0^{\hV} \right]
\ge n^{|\delta-1|}\ P_1\left[\sum_{i=\tau_{{xn}}^{\hV}}^{\si_0^{\hV}}{\hV}_i>n^2,\ \tau_{{xn}}^{\hV}<\si_0^{\hV},\ \hV_{\tau_{{xn}}^{\hV}}\le2 xn \right].
\end{equation}
This can be estimated from below by the expression in (\ref{bb}). 
By the same argument as after (\ref{bb}), using (\ref{cconv3}) instead of (\ref{cconv2}), we obtain
for a suitable $y\in[x,2 x]$ and a solution $\hY$ of (\ref{bes2}),
\begin{eqnarray*}\lefteqn{\liminf_{n\to\infty}n^{|\delta-1|}\ P_1\left[A_n,\ \tau_{{xn}}^{\hV}<\si_0^{\hV} \right]\ge c_{\ref{c5}}x^{-|\delta-1|}P_y\left[\int_0^{\si_0^{\hY}}\hY(s)\ ds>1\right]}\\
&\stackrel{\rm L.\ \ref{sca}}{=}&c_{\ref{c5}}x^{-|\delta-1|}P_1\left[\int_0^{\si_0^{\hY}}\hY(s)\ ds>\frac 1{y^2}\right]\ \ge\ 
c_{\ref{c5}}x^{-|\delta-1|}P_1\left[\int_0^{\si_0^{\hY}}\hY(s)\ ds>\frac 1{x^2}\right]\\
&\begin{array}{c}{\rm \scriptstyle L.\ \ref{3.35}}\vspace*{-2mm}\\ \longrightarrow\vspace*{-2mm}\\{\scriptstyle x\searrow 0}\end{array}&
c_{\ref{c5}}b(1-|\delta-1|)
\ =\ c_{\ref{v5c}}.
\end{eqnarray*}

\textit{Upper bound in} (\ref{xn}). We estimate the term in (\ref{term})  from above for all $n\ge 0$ and $x,\eps\in(0,1)$ by
\begin{eqnarray}
\lefteqn{\hspace*{-7mm}n^{|\delta-1|}\left(P_1\left[\sum_{i=\tau_{{xn}}^{\hV}}^{\si_0^{\hV}}\hV_i>(1-\eps x)n^2,\ \tau_{{xn}}^{\hV}<\si_0^{\hV}\right]\nonumber
+ P_1\left[\sum_{i=0}^{\tau_{xn}^{\hV}-1}\hV_i>\eps xn^2,\ \tau_{{xn}}^{\hV}<\si_0^{\hV}\right]\right)}\\
&\le&n^{|\delta-1|}P_1\left[\sum_{i=\tau_{{xn}}^{\hV}}^{\si_0^{\hV}}\hV_i>(1-\eps )n^2,\ \tau_{{xn}}^{\hV}<\si_0^{\hV}, V_{\tau_{xn}^{\hV}}\le  (1+\eps )x n \right]\label{weg}\\
&& +\  n^{|\delta-1|}P_1\left[\tau_{xn}^{\hV}<\infty, \hV_{\tau_{xn}^{\hV}}> (1+\eps )x n\right]+n^{|\delta-1|}P_1\left[\eps n\le\tau_{xn}^{\hV}<\si_0^{\hV}\right].\label{sum}
\end{eqnarray}
As above, see (\ref{wo2}) and the second term on the right-hand side of (\ref{wo0}), the sum in (\ref{sum}) is negligible.
The expression in (\ref{weg}) can be estimated from above by the strong Markov property by
the expression in (\ref{nana}), 
where $B_n:=\left\{\sum_{i=0}^{\si_0^{\hV}}\hV_i>(1-\eps )n^2\right\}$. 
By the same argument as after (\ref{nana}), using (\ref{cconv3}) instead of (\ref{cconv2}), we obtain for suitable $y(x)\in[x,(1+\eps )x]$ and $\hY$ a solution of (\ref{bes2}),
\begin{eqnarray*}
\lefteqn{\limsup_{x\searrow 0}\limsup_{n\to\infty}n^{|\delta-1|}\ P_1\left[A_n,\ \tau_{xn}^{\hV}<\si_0^{\hV} \right]}\\
&\le& c_{\ref{c5}}\limsup_{x\searrow 0}x^{-|\delta-1|}P_{y(x)}\left[\int_0^{\si_0^{\hY}}\hY(s)\ ds>1-\eps \right]\\
&\stackrel{\rm L.\ \ref{sca}}{\le}&
 c_{\ref{c5}}\limsup_{x\searrow 0}x^{-|\delta-1|}P_1\left[\int_0^{\si_0^{\hY}}\hY(s)\ ds>\frac{1-\eps }{(1+\eps )^2x^2}\right]\\
&\stackrel{\rm  L.\ \ref{3.35}}{=}&
c_{\ref{c5}}b(1-|\delta-1|)\left(\frac{1+\eps }{\sqrt{1-\eps }}\right)^{|\delta-1|}\ 
\longrightarrow
c_{\ref{v5c}}\qquad\mbox{as $\eps \searrow 0$.}
\end{eqnarray*}
\end{proof}
\section{Proof of the main result}\label{main}
\begin{proof}[Proof of Theorem \ref{ex}]
Observe that in the setting of Proposition \ref{coup}, $\si_0^U=\si_0^V$ due to (\ref{knopf}) and therefore, by definition (\ref{uk}),
\begin{equation}\label{tt}
\sup\{X_n:\ n<T_0\}=\si_0^V\quad\mbox{and}\quad T_0\I_{T_0<\infty}=\left(-1+2
\sum_{i=0}^{\si_0^V}V_i\right)\I_{\si_0^V<\infty}.
\end{equation}
(See e.g.\ \cite[Th.\ 5]{Har52} for the second identity.)
Claims (\ref{M0}) and (\ref{T0}) now follow from Theorem \ref{sun}.

Now consider the case $\delta=1$ and let $0<\eps<1$. Let $\hat \PP$ be another probability measure which satisfies (IID), (WEL), and (BD$_\mathrm{M}$)  and $\hat\PP[\om(0,i)\le t]\ge \PP[\om(0,i)\le t]$ for all $i\ge 1$ and $t\in[0,1]$ and whose corresponding parameter $\hat\delta$, defined as in (\ref{del}), satisfies $1-\eps<\hat\delta<1$. Then by monotonicity as stated in 
\cite[Lemma 15]{Zer05} we obtain $\hat\EE[P_{1,\om}[T_n<T_0]]\le P_1[T_n<T_0]$ and $\hat\EE[P_{1,\om}[T_0>n]]\le P_1[T_0>n]$. (Note that the additional condition of a.s.\ positivity of $2\om(x,i)-1$ which was assumed in \cite{Zer05} is not needed for the proof of \cite[Lemma 15]{Zer05}.) Applying (\ref{M0}) and (\ref{T0}) to $\hat \PP$ then yields the first two statements in (\ref{3s}). 
   
As for the remaining claims about the tail of $R$ 
consider the first step of the walk and use
the fact that $P_{x,\om}[n\le T_0<\infty]$ does not depend on $\om(0,\cdot)$ to obtain that
\begin{eqnarray}\nonumber
\lefteqn{P_0[n<R<\infty]\ =\ \EE\left[P_{0,\om}[n<R<\infty]\right]}\\
& =& \EE\left[ \om(0,1)P_{1,\om}[n\le T_0<\infty]\right]\label{ben}
+\EE\left[(1-\om(0,1))P_{-1,\om}[n\le T_0<\infty]\right]\\
& =& \EE[ \om(0,1)]P_{1}[n\le T_0<\infty]\nonumber
+\EE[1-\om(0,1)]P_{-1}[n\le T_0<\infty]
\end{eqnarray}
due to (IID). Because of (WEL) both $\EE[ \om(0,1)]>0$ and $\EE[1-\om(0,1)]>0$. Moreover,
$P_{-1}[n\le T_0<\infty]=\widetilde P_{1}[n\le T_0<\infty]$, where $\widetilde P_1[\cdot]=\widetilde \EE[P_{1,\om}[\cdot]]$ and $\om$ is distributed under $\widetilde \PP$ like $1-\om$ under $\PP$. The parameter $\widetilde\delta$ for $\widetilde \PP$ is equal to $-\delta$. Therefore, (\ref{T0}) implies
$n^{(|\delta-1|\wedge|-\delta-1|)/2}\, P_0[n<R<\infty]\to c_{\ref{du}}$ as $n\to\infty$
i.e.\ (\ref{tu}) since $|\delta-1|\wedge|-\delta-1|=||\delta|-1|$.
Moreover, (\ref{ben}) and the second statement of  (\ref{3s})  imply the third one.
\end{proof}
\section{Appendix: Further results from the literature}
The results in this section 
do not depend on those of Sections \ref{moon}--\ref{main}.
\begin{lemma}\label{billy}{\bf (\cite[Th.\ 3.2]{Bil99})}
Let $(S,d)$ be a metric spaces. Suppose that   $X_{m,n}, X_n,$ $Z_m\ (n,m\in\N)$ and $X$ are $S$-valued random variables such that $X_{n,m}$ and $X_n$ are defined on the same probability space with probability measure $P_n$ for all $n,m\in\N$. If
$X_{m,n}\begin{array}[t]{c}\Longrightarrow\vspace*{-3mm}\\{\scriptstyle n\to\infty}\end{array}
Z_m\begin{array}[t]{c}\Longrightarrow\vspace*{-3mm}\\{\scriptstyle m\to\infty}\end{array}X$
and 
\begin{equation}\label{jon}
\lim_{m\to\infty}\limsup_{n\to\infty}P_n\left[d(X_{m,n},X_n)>\eps\right]=0
\end{equation}
for each $\eps>0$ then
$X_{n}\begin{array}[t]{c}\Longrightarrow\vspace*{-3mm}\\{\scriptstyle n\to\infty}\end{array}
X$.
\end{lemma}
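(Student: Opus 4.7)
The plan is to verify the Portmanteau criterion $\limsup_{n\to\infty}P_n[X_n\in F]\le P[X\in F]$ for every closed set $F\subseteq S$, which by Billingsley's Portmanteau theorem is equivalent to $X_n\Rightarrow X$. The strategy is to interpose $X_{m,n}$ and $Z_m$ between $X_n$ and $X$, use the two given weak convergences through the Portmanteau inequality applied to the closed $\varepsilon$-thickening $F^{\varepsilon}:=\{y\in S:d(y,F)\le\varepsilon\}$, and then absorb the discrepancy between $X_{m,n}$ and $X_n$ via the probability bound in (\ref{jon}).

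First I would fix a closed set $F\subseteq S$ and $\varepsilon>0$, and use the triangle-style inclusion
\[
\{X_n\in F\}\ \subseteq\ \{X_{m,n}\in F^{\varepsilon}\}\cup\{d(X_{m,n},X_n)>\varepsilon\},
\]
which holds because if $X_n\in F$ and $d(X_{m,n},X_n)\le\varepsilon$ then $X_{m,n}\in F^{\varepsilon}$. Taking $P_n$-probabilities and then $\limsup_{n\to\infty}$, the first weak-convergence hypothesis $X_{m,n}\Rightarrow Z_m$ combined with Portmanteau applied to the closed set $F^{\varepsilon}$ yields
\[
\limsup_{n\to\infty}P_n[X_n\in F]\ \le\ P[Z_m\in F^{\varepsilon}]\ +\ \limsup_{n\to\infty}P_n\bigl[d(X_{m,n},X_n)>\varepsilon\bigr].
\]

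Next I would let $m\to\infty$. The assumption (\ref{jon}) kills the second term on the right. For the first, $Z_m\Rightarrow X$ and Portmanteau (again for the closed set $F^{\varepsilon}$) gives $\limsup_m P[Z_m\in F^{\varepsilon}]\le P[X\in F^{\varepsilon}]$. Hence
\[
\limsup_{n\to\infty}P_n[X_n\in F]\ \le\ P[X\in F^{\varepsilon}].
\]

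Finally I would let $\varepsilon\searrow 0$: since $F$ is closed, $F^{\varepsilon}\searrow F$ as $\varepsilon\searrow 0$, so by continuity of the probability measure along a decreasing sequence $P[X\in F^{\varepsilon}]\to P[X\in F]$. This establishes $\limsup_n P_n[X_n\in F]\le P[X\in F]$ for every closed $F$, and Portmanteau then yields $X_n\Rightarrow X$. I do not expect any real obstacle; the only subtlety is tracking the order of limits $n\to\infty$, $m\to\infty$, $\varepsilon\searrow 0$ and making sure the $\varepsilon$-thickening and (\ref{jon}) are applied with the same $\varepsilon$.
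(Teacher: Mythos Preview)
Your proof is correct and is essentially the standard Portmanteau argument found in Billingsley's book. Note, however, that the paper does not supply its own proof of this lemma: it is merely quoted from \cite[Th.\ 3.2]{Bil99} in the appendix as an auxiliary result from the literature, so there is no in-paper proof to compare against. Your argument matches the textbook proof; the only cosmetic point is that when passing to $m\to\infty$ you are implicitly using $\limsup_m(A_m+B_m)\le\limsup_m A_m+\limsup_m B_m$, which is fine here since the left-hand side is independent of $m$.
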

\begin{lemma}\label{sca}{\rm \bf (Scaling of squared Bessel processes)}
For $i=1,2$ let $x_i>0$ and let $Y_i=(Y_i(t))_{t\ge 0}$ solve the SDE {\rm (\ref{bes})} with initial condition $Y_i(0)=x_i$.
Then 
\[Y_1\left(\cdot\wedge \si_0^{Y_1}\right)\quad\mbox{and}\quad
\frac{x_1}{x_2}Y_2\left(\frac{x_2}{x_1}\cdot\wedge \si_0^{Y_2}\right)
\]
have the same distribution. 
\end{lemma}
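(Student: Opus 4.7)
The plan is to exhibit a time-scaled and space-scaled version of $Y_2$ that solves the same SDE (\ref{bes}) as $Y_1$ with the same initial condition, and then invoke the distributional uniqueness already cited after (\ref{bes}) to conclude the laws coincide.

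First I would set $c:=x_1/x_2>0$ and define $\widetilde Y(t):=c\,Y_2(t/c)$ for $t\ge 0$, noting that $\widetilde Y(0)=c x_2=x_1$. Then I would verify that $\widetilde Y$ is a continuous semimartingale with finite-variation part $\delta t$ and quadratic variation
\[
[\widetilde Y]_t=c^2\,[Y_2]_{t/c}=c^2\int_0^{t/c}2Y_2^+(u)\,du=2\int_0^t cY_2^+(s/c)\,ds=2\int_0^t\widetilde Y^+(s)\,ds,
\]
where in the middle step I substituted $u=s/c$ and used $c>0$ in the last step. The finite-variation part is likewise computed by splitting $\widetilde Y(t)-x_1=c\int_0^{t/c}\delta\,du+c\int_0^{t/c}\sqrt{2Y_2^+(u)}\,dB_2(u)=\delta t+N(t)$, where $N$ is a continuous local martingale with $[N]_t=2\int_0^t\widetilde Y^+(s)\,ds$. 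By the martingale representation theorem (or directly by the Dambis--Dubins--Schwarz time change) there exists a Brownian motion $\widetilde B$, possibly on an enlarged filtration, such that $N(t)=\int_0^t\sqrt{2\widetilde Y^+(s)}\,d\widetilde B(s)$. Hence $\widetilde Y$ is a weak solution of (\ref{bes}) with $\widetilde Y(0)=x_1$.

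By the distributional uniqueness for (\ref{bes}) invoked after that equation in the paper, $\widetilde Y$ and $Y_1$ have the same law on $C[0,\infty)$. To transfer this to the stopped processes, I would observe that since $c>0$ the hitting time transforms as $\si_0^{\widetilde Y}=\inf\{t\ge 0:cY_2(t/c)\le 0\}=c\,\si_0^{Y_2}$, and therefore
\[
\widetilde Y(t\wedge\si_0^{\widetilde Y})=c\,Y_2\!\left((t\wedge c\si_0^{Y_2})/c\right)=c\,Y_2\!\left((t/c)\wedge\si_0^{Y_2}\right)=\frac{x_1}{x_2}Y_2\!\left(\frac{x_2}{x_1}t\wedge\si_0^{Y_2}\right).
\]
Since $\si_0^{\widetilde Y}$ and $\si_0^{Y_1}$ are measurable functionals of the paths of $\widetilde Y$ and $Y_1$ respectively (as first hitting times of a closed set by continuous paths), the equality in law of $\widetilde Y$ and $Y_1$ descends to the stopped processes, yielding the claim.

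The main technical subtlety is the time change for the stochastic integral when passing from the representation $\widetilde Y=cY_2(\cdot/c)$ to the SDE form $d\widetilde Y=\delta\,dt+\sqrt{2\widetilde Y^+}\,d\widetilde B$. This is standard but requires either the martingale representation theorem or a careful application of DDS, and one must be mindful that the Brownian motion $\widetilde B$ need not equal $\sqrt{c}\,B_2(\cdot/c)$ pathwise (only in law), which is harmless here since we need only distributional equality. The case $\delta<0$, where $Y$ continues deterministically after $\si_0^Y$ by (\ref{dt}), causes no issue because both sides are stopped at the hitting time of $0$.
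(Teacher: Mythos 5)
Your proof is correct and gives a self-contained argument, whereas the paper's proof of Lemma \ref{sca} consists entirely of citations: \cite[Ch.\ XI, (1.6) Prop.]{RY99} for $\delta\ge 0$, and \cite[A.3]{GY03} together with \cite[Lem.\ 3.2 (i)]{KM11} for general $\delta$. Your route---scale the candidate process, verify by a direct quadratic-variation computation that it is a weak solution of (\ref{bes}), and then invoke the distributional uniqueness already recorded after (\ref{bes})---handles all $\delta\in\R$ in a single stroke, without splitting by sign of $\delta$ or deferring to three external sources; this makes the scaling property essentially free given the well-posedness discussion in Section \ref{moon}. One small simplification is available: the Brownian motion $\widetilde B(t):=\sqrt{c}\,B_2(t/c)$ produced by Brownian scaling already satisfies $N(t)=\int_0^t\sqrt{2\widetilde Y^+(s)}\,d\widetilde B(s)$ pathwise, via the deterministic time-change formula for stochastic integrals, so the appeal to the martingale representation theorem or the Dambis--Dubins--Schwarz construction is unnecessary and your caveat about $\widetilde B$ agreeing with $\sqrt{c}\,B_2(\cdot/c)$ only in law is overcautious (they agree pathwise). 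The final reduction, using that $\si_0^{\widetilde Y}=c\,\si_0^{Y_2}$ and that stopping at a first hitting time is a measurable functional of the continuous path, is clean and correctly covers the case $\delta<0$, where the solution continues deterministically after $\si_0$ as in (\ref{dt}).
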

\begin{proof} For $\delta\ge 0$ see \cite[Ch.\ XI, (1.6) Prop.]{RY99}. For general $\delta$  see \cite[A.3]{GY03} and \cite[Lem.\ 3.2 (i)]{KM11}. 
\end{proof}
\begin{lemma}\label{3.35} Let $\delta<1$ and
let $Y=(Y(t))_{t\ge 0}$ solve the SDE {\rm (\ref{bes})} with $Y(0)=1$. Then there are constants $a(\delta),b(\delta)\in(0,\infty)$ such that
\begin{eqnarray}\label{L3.3}
\lim_{t\to\infty}t^{1-\delta}P_1[\si_0^Y>t]&=&a(\delta)\qquad\mbox{and}\\
\lim_{t\to\infty}t^{(1-\delta)/2}P_1\left[\int_0^{\si_0^Y}Y(s)\ ds>t\right]&=&b(\delta).
\label{L3.5}
\end{eqnarray} 
\end{lemma}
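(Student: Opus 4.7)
The plan is to derive both tails from classical facts about the squared Bessel process $2Y$, which has dimension $2\delta$ for $\delta>0$ and a standard extension to $\delta\le 0$ (see \cite{GY03}), combined with direct integration in one case and Karamata's Tauberian theorem in the other. Lemma \ref{sca} reduces everything to the case $x=1$.

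For (\ref{L3.3}) I would invoke the explicit hitting-time density
\[
P_x[\sigma_0^Y\in dt]\;=\;\frac{x^{1-\delta}}{\Gamma(1-\delta)\,t^{2-\delta}}\,e^{-x/t}\,dt\qquad(x>0,\ \delta<1).
\]
For $0<\delta<1$ this is classical (see e.g.\ \cite[Ch.\ XI]{RY99} after the identification $2Y=R^2$); for $\delta\le 0$ the same derivation applies, or one obtains it by solving the ODE $xu''+\delta u'=\mu u$ with $u(0)=1$, $u(\infty)=0$ to get $E_x[e^{-\mu\sigma_0^Y}]=\frac{2}{\Gamma(1-\delta)}(\sqrt{\mu x})^{1-\delta}K_{1-\delta}(2\sqrt{\mu x})$ and inverting the Laplace transform. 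Setting $x=1$ and using $e^{-1/s}\to 1$ as $s\to\infty$,
\[
P_1[\sigma_0^Y>t]\;=\;\frac{1}{\Gamma(1-\delta)}\int_t^\infty\frac{e^{-1/s}}{s^{2-\delta}}\,ds\;\sim\;\frac{t^{-(1-\delta)}}{\Gamma(2-\delta)}\qquad(t\to\infty),
\]
so one may take $a(\delta)=1/\Gamma(2-\delta)\in(0,\infty)$.

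For (\ref{L3.5}) I would apply Feynman--Kac: the function $\psi(x,\lambda):=E_x[\exp(-\lambda\int_0^{\sigma_0^Y}Y(s)\,ds)]$ solves $x\psi''+\delta\psi'-\lambda x\psi=0$ with $\psi(0,\lambda)=1$ and $\psi(\infty,\lambda)=0$. The substitution $\psi(x,\lambda)=G(\sqrt{\lambda}\,x)$ reduces this to $sG''+\delta G'-sG=0$, and the further change $G(s)=s^{(1-\delta)/2}H(s)$ produces the modified Bessel equation of index $\nu=(1-\delta)/2$. The decay condition at infinity forces $H$ to be a multiple of $K_\nu$, and matching at zero via $s^\nu K_\nu(s)\to 2^{\nu-1}\Gamma(\nu)$ gives
\[
\psi(x,\lambda)\;=\;\frac{2^{1-\nu}}{\Gamma(\nu)}\bigl(\sqrt{\lambda}\,x\bigr)^\nu K_\nu\bigl(\sqrt{\lambda}\,x\bigr).
\]
The standard small-$s$ expansion of $K_\nu$ yields $1-\psi(1,\lambda)\sim C(\delta)\,\lambda^\nu$ with an explicit $C(\delta)>0$ as $\lambda\to 0^+$, and Karamata's Tauberian theorem then converts this into $P_1[\int_0^{\sigma_0^Y}Y(s)\,ds>t]\sim b(\delta)\,t^{-(1-\delta)/2}$ with $b(\delta):=C(\delta)/\Gamma(1-\nu)>0$.

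The main technical obstacle is uniformity in $\delta$. For $\delta\le -1$ one has $\nu\ge 1$, so the Taylor expansion $1-\psi(1,\lambda)=c_1\lambda+c_2\lambda^2+\cdots$ contains analytic terms that dominate the non-analytic $\lambda^\nu$ piece, and Karamata's theorem does not apply directly; I would either subtract the finitely many moments $E_1[(\int_0^{\sigma_0^Y}Y)^k]$, $k<\nu$ (which are finite by the density formula), and apply a more refined Tauberian statement to the remainder, or else first prove the statement for $\delta$ in a dense subset of $(-\infty,1)$ avoiding the integer values of $\nu$ and then extend by monotonicity and continuity in $\delta$, using the standard pathwise coupling of the SDE (\ref{bes}) across different drifts. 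Integer values of $\nu$ merely introduce logarithmic corrections in the expansion of $K_\nu$, but do not affect the leading polynomial tail.
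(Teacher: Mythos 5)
Your derivation of (\ref{L3.3}) is precisely the density-formula route that the paper itself recommends in the remark following its proof, where it cites \cite[(15)]{GY03} for dimensions in $[0,2)$ and \cite{Ale11} for negative dimensions. Translating the $\mathrm{BESQ}^{2\delta}$ first-passage density to $Y=Z/2$ gives exactly your formula for $P_x[\sigma_0^Y\in dt]$, the tail integral is elementary, and $a(\delta)=1/\Gamma(2-\delta)$ is correct. For (\ref{L3.5}) the paper does nothing but cite \cite[Lemma 3.5]{KM11}, so your Feynman--Kac argument is a genuine self-contained alternative: the reduction $\psi(x,\lambda)=G(\sqrt\lambda x)$, $G(s)=s^\nu H(s)$ with $\nu=(1-\delta)/2$ does land on the modified Bessel equation of order $\nu$, the decaying solution $K_\nu$ with the normalization $s^\nu K_\nu(s)\to 2^{\nu-1}\Gamma(\nu)$ is correct, and the resulting explicit constant $b(\delta)=C(\delta)/\Gamma(1-\nu)=4^{-\nu}/\Gamma(\nu+1)$ is a nice by-product that \cite{KM11} does not give. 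The trade-off is that your route relies on a Tauberian inversion, which is clean only in part of the parameter range.

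That is where the gap sits, and you are right to flag it, but neither proposed fix is in working order. Karamata's theorem applies directly to $1-\psi(1,\lambda)\sim C(\delta)\lambda^\nu$ only for $\nu\in(0,1)$, i.e.\ $-1<\delta<1$. For $\delta\le -1$ the analytic Taylor terms $c_1\lambda+\cdots+c_{\lfloor\nu\rfloor}\lambda^{\lfloor\nu\rfloor}$ dominate $\lambda^\nu$. The moment-subtraction idea is the viable one, but as written it is circular: you cannot use finiteness of $E_1[W^k]$ for $k<\nu$ ``by the density formula,'' because that is the tail asymptotic you are trying to prove; instead you should read off the finiteness from the analyticity in $s^2$ of $s^\nu K_\nu(s)$ minus its non-analytic $s^{2\nu}$-part near $s=0$, and then apply Karamata not to $1-\psi$ but to the $(\lfloor\nu\rfloor+1)$-st derivative, which is the Laplace--Stieltjes transform of the monotone measure $t^{\lfloor\nu\rfloor+1}\,P_1[W\in dt]$, followed by a monotone-density step to recover $P_1[W>t]$. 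The monotonicity-and-continuity alternative, on the other hand, does not work as stated: coupling gives $P_1[W^{(\delta')}>t]\le P_1[W^{(\delta)}>t]\le P_1[W^{(\delta'')}>t]$ for $\delta'<\delta<\delta''$, but the target exponents $\nu'>\nu>\nu''$ differ, so $t^{\nu}P_1[W^{(\delta)}>t]$ is only trapped between $b(\delta')\,t^{\nu-\nu'}\to 0$ and $b(\delta'')\,t^{\nu-\nu''}\to\infty$ — a vacuous sandwich. You would need uniform-in-$\delta$ error bounds in the Tauberian step, which is precisely what is missing. Until the moment-subtraction argument is executed, your proof of (\ref{L3.5}) covers only $-1<\delta<1$.
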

\begin{proof}
Statements (\ref{L3.3}) and (\ref{L3.5}) follow from Lemma \ref{sca} and are the contents of  \cite[Lemma 3.3]{KM11} and 
\cite[Lemma 3.5]{KM11}, respectively.

In fact, with respect to (\ref{L3.3}) much more is known. In \cite[(15)]{GY03} a formula for the density of the first passage time to 0 of a Bessel process with dimension in $[0,2)$ is given. This formula implies (\ref{L3.3}) in the case $0\le \delta<1$. It has been noticed in the remark after 
\cite[(4.24)]{Ale11} that the same formula also holds for negative dimensions.
\end{proof}
\begin{lemma} {\rm (cf.\ \cite[Lemma A.1]{KM11})}
  \label{gld} Let $\xi_i,\ i\in\N,$ be independent
  random variables which are geometrically distributed with parameter 1/2 and  $E[\xi_i]=1$. Then  for
  all $x,y\in\N$,
  \[
    P\left[\left|\sum_{i=1}^x(\xi_i-1)\right|\ge y\right]\le
    2\exp\left(\frac{-y^2}{6(x\vee y)}\right).
  \]
\end{lemma}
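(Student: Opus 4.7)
The approach is a Chernoff bound. Via the substitution $u = e^\lambda - 1 \in (0,1)$, the moment generating function takes the compact form $E[e^{\lambda(\xi_i - 1)}] = e^{-\lambda}/(2 - e^\lambda) = 1/(1-u^2)$, so Markov's inequality gives, for the upper tail,
\[
P\Big[\sum_{i=1}^x (\xi_i - 1) \geq y\Big] \leq (1-u^2)^{-x} (1+u)^{-y}.
\]
Minimizing the exponent over $u \in (0,1)$ yields $u = y/(2x+y)$, and a short rearrangement produces the bound $\exp(x g(y/x))$ with
\[
g(t) := (2+t)\ln(1+t/2) - (1+t)\ln(1+t).
\]
Differentiation gives $g(0) = g'(0) = 0$ and $g''(t) = -1/[(1+t)(2+t)]$, so $g$ is concave and nonpositive on $[0,\infty)$.

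The plan is then to establish $-g(t) \geq t^2/6$ on $[0,1]$ (sub-Gaussian regime) and $-g(t) \geq t/6$ on $[1,\infty)$ (exponential regime); substituting $t = y/x$, these combine into $x(-g(y/x)) \geq y^2/(6(x \vee y))$, delivering the upper-tail Chernoff bound $\exp(-y^2/(6(x \vee y)))$. The exponential regime is handled by analyzing $k(t) := -g(t) - t/6$: one checks $k''(t) = 1/[(1+t)(2+t)] > 0$, so the derivative $k'(t) = \ln((1+t)/(1+t/2)) - 1/6$ is increasing and has a unique zero $t^\ast$; since $k'(1) = \ln(4/3) - 1/6 > 0$ we have $t^\ast < 1$, and combined with the direct evaluation $k(1) = 2\ln 2 - 3\ln(3/2) - 1/6 > 0$ this yields $k \geq 0$ on $[1,\infty)$. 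The sub-Gaussian regime is more delicate because the naive integral bound gives only $-g(t) \geq t^2/[2(1+t)(2+t)]$, which equals $1/12$ at $t = 1$ rather than the required $1/6$. Instead one studies $h(t) := -g(t) - t^2/6$, using $h(0) = h'(0) = 0$ and $h''(t) = 1/[(1+t)(2+t)] - 1/3$: the latter changes sign only at $(\sqrt{13}-3)/2 \approx 0.303$, so $h$ has a single interior local maximum on $[0,1]$, and nonnegativity throughout reduces to the tight direct check $h(1) = 2\ln 2 - 3\ln(3/2) - 1/6 \approx 0.003 > 0$.

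The lower tail is handled analogously: with $\lambda < 0$ and $v = 1 - e^{-|\lambda|} \in (0,1)$ the MGF is again $1/(1-v^2)$, the optimum is at $v = y/(2x-y)$, and the exponent becomes $x g_-(y/x)$ with $g_-(t) := (2-t)\ln(1-t/2) - (1-t)\ln(1-t)$ and $g_-''(t) = -1/[(1-t)(2-t)]$. The cruder bound $(1-s)(2-s) \leq 2$ on $[0,1]$, applied to the integral representation of $-g_-$, immediately yields $-g_-(t) \geq t^2/4 \geq t^2/6$. Because $\xi_i - 1 \geq -1$ forces $\sum(\xi_i - 1) \geq -x$, the lower-tail event is empty for $y > x$, so no exponential regime is needed here. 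A union bound over the two tails produces the prefactor $2$. The main obstacle is the tightness of the constant $6$ in the upper-tail sub-Gaussian regime, where one-line Bernstein-type estimates fall short and the nonmonotone analysis of $h$ above is essentially unavoidable.
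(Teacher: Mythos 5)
Your proof is correct, but it takes a genuinely different route from the paper's. The paper's argument avoids calculus entirely: it interprets $\xi_i+1$ as the first-success time in a sequence of fair coin flips, so that $\sum_{i=1}^x(\xi_i+1)$ is the time of the $x$-th head among i.i.d.\ Bernoulli$(1/2)$ variables $Y_i$. The events $\{\sum(\xi_i-1)\ge y\}$ and $\{\sum(\xi_i-1)\le -y\}$ then translate into tail events for the simple symmetric random walk $S_m=\sum_{i\le m}(2Y_i-1)$ (at times $2x+y-1$ and $2x-y$ respectively), to which the textbook Azuma inequality $P[S_m\ge a]\le e^{-a^2/(2m)}$ applies immediately; the factor of $6$ in the denominator then falls out of the elementary bounds $2(2x+y-1)\le 6(x\vee y)$ and $2(2x-y)\le 6(x\vee y)$. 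Your approach instead optimizes the exponential moment $E[e^{\lambda(\xi_i-1)}]=(1-u^2)^{-1}$ directly, arriving at the sharp Cram\'er rate function $g(t)=(2+t)\ln(1+t/2)-(1+t)\ln(1+t)$, and then verifies the two inequalities $-g(t)\ge t^2/6$ on $[0,1]$ and $-g(t)\ge t/6$ on $[1,\infty)$ (plus the easier lower-tail analogue). This is more work — in particular the second-derivative sign change of $h(t)=-g(t)-t^2/6$ at $(\sqrt{13}-3)/2$ and the razor-thin check $h(1)=2\ln 2-3\ln(3/2)-1/6\approx 0.003>0$ have no counterpart in the paper — but it is self-contained, it exposes exactly why the constant $6$ is essentially optimal at $y=x$ (whereas the paper's reduction has a bit of slack, $2(2x+y-1)$ vs.\ $6(x\vee y)$), and the Chernoff template would adapt to other offspring laws where the coin-flip trick does not. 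Both proofs are valid; the paper's is shorter, yours is the honest large-deviations computation.

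One small tightening worth noting: the claim that ``$h$ has a single interior local maximum on $[0,1]$'' implicitly uses $h'(1)=\ln(4/3)-1/3<0$; without that the unique sign change of $h''$ would only give that $h'$ is unimodal. But in the complementary case $h'(1)\ge 0$ one would have $h'\ge 0$ on $[0,1]$ and hence $h\ge h(0)=0$ for free, so nonnegativity of $h$ holds either way and the argument is sound.
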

\begin{proof} We are going to use a special case of Azuma's inequality, which states that for the simple symmetric RW $(S_n)_{n\ge 0}$ on $\Z$ starting at 0 and any $a,n\ge 0$, $P[S_n\ge a]\le \exp(-a^2/(2n))$, see e.g.\ \cite[Theorem A.1.1]{AS00}.
 Let $(Y_i)_{i\ge 1}$ be an independent sequence of Bernoulli(1/2)-distributed random variables. By interpreting $\xi_i+1$ as the time of the first appearance of ``heads'' in a sequence of independent fair coin flips we obtain
\begin{eqnarray*}
P\left[\sum_{i=1}^x(\xi_i-1)\ge  y\right]&=&
P\left[\sum_{i=1}^x(\xi_i+1)\ge 2x+y\right]
=P\left[\sum_{i=1}^{2x+y-1}Y_i< x\right]\\
&=&P\left[\sum_{i=1}^{2x+y-1}(2Y_i-1)< -(y-1)\right]\ =\ P\left[S_{2x+y-1}\ge y\right]\\
&\le& 
e^{-y^2/{(2(2x+y-1))}}\le e^{-y^2/(6(x\vee y))}
\end{eqnarray*}
by Azuma's inequality.
 Similarly, for $x\ge y$,
\begin{eqnarray}\label{non}
P\left[\sum_{i=1}^x(\xi_i-1)\le - y\right]&=&
P\left[\sum_{i=1}^x(\xi_i+1)\le 2x- y\right]
=P\left[\sum_{i=1}^{2x-y}Y_i\ge x\right]\\
&=&P\left[S_{2x-y}\ge y\right]\le e^{-y^2/{(2(2x-y))}}\le e^{-y^2/(6(x\vee y))}\nonumber
\end{eqnarray}
again by Azuma's inequality. For $x<y$ the quantities in (\ref{non}) are 0. A union bound now yields the claim.
\end{proof}
The following lemmas about the BPs $V,\hV$, and $\widetilde V$ are slight modifications of results from \cite{KM11}. 
The first one controls how much the BPs $V$ and $\hV$ ``overshoot'' $x$ at the times $\tau_x$ and $\si_x$.

\begin{lemma}\label{os}{\bf(Overshoot)}
There are constants $\con{o1},\con{o2}>0$ and $N\in\N$ such that for
every $x\ge N$, $y\ge 0$, and $\eps>0$, 
\begin{align}
  \max_{0< z<x}P_z\left[V_{\tau^V_x}>x+y\,|\,\tau^V_x<\sigma^V_0\right]&\le 
c_{\ref{o1}}( e^{-c_{\ref{o2}}y^2/x} + e^{-c_{\ref{o2}}y}),\label{over} \\ \max_{x<z< 4x} 
P_z[V_{\sigma^V_x\wedge \tau^V_{4x}}<x-y]&\le c_{\ref{o1}}
  e^{-c_{\ref{o2}}y^2/x},\qquad\mbox{and}\label{under}\\
\label{cos}
\max_{0<
  z<x}P_z\left[\tau^{\hV}_x<\infty,\ \hV_{\tau^{\hV}_x}>(1+\epsilon)x
\right]&\le c_{\ref{o1}}( e^{-c_{\ref{o2}}\epsilon^2x} +
e^{-c_{\ref{o2}}\epsilon x}).
\end{align}
\end{lemma}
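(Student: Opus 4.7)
The plan is to reduce each of the three bounds to estimates on a single increment $V_{k+1}-V_k=\sum_{m=1}^{V_k}(\xi_m^{(k+1)}-1)$ evaluated at an appropriate stopping time, and then to invoke Lemma~\ref{gld} after splitting off the first $M$ summands (whose distribution is controlled by (\ref{a3}) and absorbed into the constants by taking $N$ sufficiently large).

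For the undershoot bound (\ref{under}), let $\sigma:=\sigma_x^V\wedge\tau_{4x}^V$. Then $V_{\sigma-1}=:w\in[x,4x)$, and on $\{V_\sigma<x-y\}$ we have $\sum_{m=1}^w(\xi_m^{(\sigma)}-1)<-(w-x+y)\le -y$. Conditioning on the history up to $\sigma-1$ and on $w$, the $\xi_m^{(\sigma)}$ are fresh with the joint distribution prescribed by (\ref{ind})--(\ref{gg}). Splitting the sum as $\sum_{m=1}^M(\xi_m-1)+\sum_{m=M+1}^w(\xi_m-1)$, the first block contributes a random variable with a fixed (fourth-moment) tail which, for $y$ larger than a threshold depending only on the law of $\xi$, contributes at most $y/2$; the second block has variance $2(w-M)<8x$, and Lemma~\ref{gld} gives a tail $\le 2\exp(-c(y/2)^2/(4x))$. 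Summing over the possible values of $w$ then yields (\ref{under}).

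For the overshoot bound (\ref{over}), the partition $\{\tau_x^V-1=k,\,V_k=w,\,\sigma_0^V>k\}$ decomposes both numerator and denominator of the conditional probability into a weighted sum over $(k,w)$ with $0<w<x$, with the same weights $P_z[\sigma_0^V>k,\tau_x^V>k,V_k=w]$. This reduces the task to the uniform ratio bound
\begin{equation*}
\max_{0<w<x}\frac{P\!\left[\sum_{m=1}^w\xi_m>x+y\right]}{P\!\left[\sum_{m=1}^w\xi_m\ge x\right]}\ \le\ c_{\ref{o1}}\bigl(e^{-c_{\ref{o2}}y^2/x}+e^{-c_{\ref{o2}}y}\bigr).
\end{equation*}
The numerator is handled by the same decomposition as above plus Lemma~\ref{gld}, with deviation $x+y-w-\delta\ge y-O(1)$. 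The denominator requires an anti-concentration lower bound: for $w\in[x-c_0\sqrt{x},x)$ it is bounded below by a constant by the central-limit estimate for $\sum_{m=M+1}^w(\xi_m-1)$ (variance $2(w-M)\asymp x$); for smaller $w$, the ratio is controlled by a one-step exponential tilting of the Geom$(1/2)$ law, which produces precisely the exponent $y$ needed to match the numerator.

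For (\ref{cos}), when $\delta<1$ we have $\hV=V$ and the bound follows by applying (\ref{over}) with $y=\eps x$. When $\delta>1$, invoke the $h$-transform identity (\ref{ht}) at the stopping time $\tau:=\tau_x^V\wedge\sigma_0^V$:
\begin{equation*}
P_z[\hV_{\tau_x^{\hV}}>(1+\eps)x,\,\tau_x^{\hV}<\infty]
=\sum_{y'>(1+\eps)x}P_z[V_\tau=y']\,\frac{h(y')}{h(z)}.
\end{equation*}
By Proposition~\ref{har} there exist $C_1,C_2>0$ such that $h(y')/h(z)\le C_1(z/y')^{\delta-1}$ for $y'\ge z\ge C_2$; splitting $\{V_\tau=y'\}$ into the overshoot of $V$ at $\tau_x^V$ and using the unconditioned overshoot (\ref{over}) to bound $\sum_{y'>(1+\eps)x}P_z[V_\tau=y']$ produces the claimed exponential bound, with the polynomial $(x/y')^{\delta-1}$ factor being absorbed into the constants.

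The main obstacle is the uniform lower bound on $P[\sum_{m=1}^w\xi_m\ge x]$ in (\ref{over}), where small $w$ forces the sum to reach $x$ only via a large-deviation event; the tilting argument needed to match this large-deviation rate against the numerator's tail is the delicate step, and relies crucially on the specific Geom$(1/2)$ structure of $\xi_m^{(k)}$ for $m>M$ rather than on any generic moment assumption. The first-$M$-term contributions, for which only (\ref{a3}) is assumed, are made negligible throughout by restricting to $x\ge N$ with $N$ chosen large depending on the fixed law of $(\xi_1^{(1)},\ldots,\xi_M^{(1)})$.
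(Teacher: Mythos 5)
Your reduction of (\ref{under}) to a single-increment estimate (first-entrance decomposition over $\{\sigma>k,V_k=w\}$ plus Lemma~\ref{gld}) is essentially right, but the remark about the first block is off: for the \emph{undershoot} the first $M$ summands satisfy $\sum_{m=1}^M(\xi_m-1)\ge -M$ \emph{deterministically}, because $\xi_m\ge 0$; it is this, not any ``fourth-moment tail,'' that lets you shift the threshold to $y/2$ for the geometric block once $y>2M$. For (\ref{over}) the decomposition into the per-$w$ ratio
\begin{equation*}
\max_{0<w<x}\frac{P\big[\sum_{m=1}^{w}\xi_m> x+y\big]}{P\big[\sum_{m=1}^{w}\xi_m\ge x\big]}
\end{equation*}
is a reasonable program, but this is exactly where your proposal stops short: the tilting step you single out as delicate is the entire content of the estimate and is left unproven. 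Moreover, ``the same decomposition as above'' does not work in the overshoot direction because the first $M$ terms are \emph{not} bounded above, and (\ref{a3}) gives only a fourth moment, hence only a polynomial tail on $\sum_{m=1}^M\xi_m$, which by itself cannot produce the $e^{-c_{\ref{o2}}y}$ factor for $y\gg\sqrt x$; one has to use more than (\ref{a3}) gives about the first $M$ offspring (as the coin-toss construction in Proposition~\ref{coup} in fact supplies), or restructure the argument. (The paper delegates (\ref{over}) and (\ref{under}) to \cite[Lemma 5.1]{KM11}, so no direct comparison is possible, but the ratio estimate is the step that must be done in full.)

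For (\ref{cos}) your argument is correct but heavier than necessary, and structurally misplaced: Lemma~\ref{os} sits in the Appendix, which the paper declares independent of Sections~\ref{moon}--\ref{main}, yet you invoke Proposition~\ref{har}, a Section~\ref{sunset} result whose own proof rests on the appendix lemmas. The paper only uses monotonicity of $h$. By the strong Markov property at $\tau_x^V$ and the fact that $h$ is non-increasing, for $y'>(1+\eps)x>z$ one has $P_z[\sigma_0^V<\infty\mid\tau_x^V<\infty,\,V_{\tau_x^V}=y']=h(y')\le h(z)$, hence $P_z[\tau_x^{\hV}<\infty,\ \hV_{\tau_x^{\hV}}>(1+\eps)x]\le P_z[\tau_x^V<\infty,\ V_{\tau_x^V}>(1+\eps)x]$, and (\ref{over}) with $y=\eps x$ finishes. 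No asymptotics of $h$ and no case split in $\delta$ are needed.
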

\begin{proof} The proofs of the first two statements repeat those of \cite[Lemma 5.1]{KM11}. 
 For the third statement note that by definition of
  $\hV$, 
\begin{eqnarray*}
\lefteqn{P_z\left[\tau^{\hV}_x<\infty,\ \hV_{\tau^{\hV}_x}>(1+\epsilon)x
     \right]\ =\ P_z\left[\tau^V_x<\infty,\ V_{\tau^V_x}>(1+\epsilon)x\,|\,\sigma^V_0<\infty \right]}\\ 
&=&\frac{P_z\left[\sigma^V_0<\infty\,|\,
      \tau^V_x<\infty,\ V_{\tau^V_x}>(1+\epsilon)x\right]}{P_z\left[\sigma^V_0<\infty\right]}\,P_z\left[\tau^V_x<\infty,\ V_{\tau^V_x}>(1+\epsilon)x\right]\\
&\le&P_z\left[\tau^V_x<\infty,\ V_{\tau^V_x}>(1+\epsilon)x\right]
\end{eqnarray*} since by the strong Markov property and
  monotonicity with respect to the starting point, the fraction above
  does not exceed 1 for all $0<z<x$.
An
  application of (\ref{over}) with $y=\epsilon x$ completes the proof.
\end{proof}
\begin{lemma}
  \label{5.2} Let $\delta>1$, $a\in(1,2]$, $|x-a^n|\le a^{2n/3}$, and
  $\gamma:=\inf\{k\ge 0\,:\, V_k\not\in(a^{n-1},a^{n+1})\}$. Then for all
sufficiently large $n$,
\begin{align*}
  (&i)\quad P_x\Big[\mathrm{dist}(V_\gamma ,(a^{n-1},a^{n+1})) \ge
  a^{2(n-1)/3}\Big] \le \exp(-a^{n/4});\\(i&i)\quad
  \left|P_x[V_\gamma\leq a^{n-1}] - \frac{1}{a^{\delta-1} +1}\right| \leq
a^{-n/4}.
\end{align*}
\end{lemma}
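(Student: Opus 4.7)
The two parts require different tools: (i) is an overshoot/undershoot estimate handled directly by Lemma \ref{os}, while (ii) is a gambler's-ruin type computation using an approximately harmonic function for $V$.

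For (i), decompose the event into the undershoot $\{V_\gamma \le a^{n-1} - a^{2(n-1)/3}\}$ and the overshoot $\{V_\gamma \ge a^{n+1} + a^{2(n-1)/3}\}$. Since $a\le 2$ gives $a^{n+1}\le 4a^{n-1}$, the first event is contained in the one on the left of (\ref{under}) (with $x:=a^{n-1}$, $y:=a^{2(n-1)/3}$), yielding a bound $c_{\ref{o1}}\exp(-c_{\ref{o2}}\,a^{(n-1)/3})$. On the overshoot event one has $\gamma=\tau^V_{a^{n+1}}$ and the event is a subset of the unconditional version of the one on the left of (\ref{over}), which, after dropping the conditioning on $\{\tau^V_{a^{n+1}}<\sigma^V_0\}$, gives the bound $c_{\ref{o1}}(\exp(-c_{\ref{o2}}\,a^{(n-7)/3})+\exp(-c_{\ref{o2}}\,a^{2(n-1)/3}))$ via (\ref{over}) with $x:=a^{n+1}$, $y:=a^{2(n-1)/3}$. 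All three exponents exceed $n/4$ for $n$ large, so the total is below $\exp(-a^{n/4})$.

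For (ii), take $f(y):=y^{1-\delta}$, the (negative of the) scale function for the SDE (\ref{bes}) at dimension $2\delta$. On $\{V_k\ge M\}$, a third-order Taylor expansion of $f(V_{k+1})-f(V_k)$, combined with $E[V_{k+1}-V_k\mid\F_k]=\delta$ from (\ref{ddM}) and the variance formula of Lemma \ref{mm}, produces the key cancellation
\[
\delta\,f'(v)+v\,f''(v)=\delta(1-\delta)v^{-\delta}-\delta(1-\delta)v^{-\delta}=0,
\]
so that $|E[f(V_{k+1})-f(V_k)\mid\F_k]|\le C\,V_k^{-\delta-1/2}$ on $\{V_k\ge M\}$, the dominant contribution coming from the third-moment bound $E[|V_{k+1}-V_k|^3\mid\F_k]=O(V_k^{3/2})$ (a Rosenthal-type estimate reducing to Lemma \ref{gld}). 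Applying optional stopping (to the Doob compensator of $f(V)$) at $\gamma$, using $V_k\ge a^{n-1}$ on $\{k<\gamma\}$ and the bound $E_x[\gamma]=O(a^n)$ (itself obtained from the martingale $\widetilde V_k-k\delta$ of Lemma \ref{mm} together with a crude upper bound on $V_\gamma$), one gets
\[
E_x[f(V_\gamma)]=x^{1-\delta}+O\!\left(E_x[\gamma]\cdot(a^{n-1})^{-\delta-1/2}\right)=x^{1-\delta}\bigl(1+O(a^{-n/2})\bigr).
\]
Next, part (i) lets one replace $V_\gamma$ on the two exit branches by $a^{n-1}$ and $a^{n+1}$ respectively at multiplicative cost $1+O(a^{-(n-1)/3})$, while $|x-a^n|\le a^{2n/3}$ yields $x^{1-\delta}=a^{n(1-\delta)}(1+O(a^{-n/3}))$. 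Setting $p:=P_x[V_\gamma\le a^{n-1}]$ and dividing through by $a^{n(1-\delta)}$ produces
\[
p\,a^{\delta-1}+(1-p)\,a^{1-\delta}=1+O(a^{-n/3}),
\]
which solves to $p=1/(a^{\delta-1}+1)+O(a^{-n/3})$, comfortably beating $a^{-n/4}$ for $n$ large.

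The main technical obstacle is executing the approximate-martingale step rigorously in the presence of the singularity of $f$ at $0$: since $\delta>1$, one has $f(V_\gamma)=+\infty$ on the (possible) event $\{V_\gamma=0\}$, so $E_x[f(V_\gamma)]$ is not literally well defined. This is handled by working with a truncated version (for instance $\tilde f(y):=(y+1)^{1-\delta}$, which is bounded, smooth, and agrees with $f$ up to $O(v^{-\delta-1})$ in all three derivatives used in the Taylor step), or equivalently by stopping at $\gamma\wedge\sigma^V_{\lceil a^{n-1}-a^{2(n-1)/3}\rceil}$ and absorbing the stopped-early branch into the $\exp(-a^{n/4})$ error from part (i). One also needs the quantitative tail bound on $\gamma$ that allows the cumulative Taylor error $\sum_{k<\gamma}V_k^{-\delta-1/2}$ to be controlled in $L^1$; once these ingredients are in place the remaining algebra is routine and the rate $a^{-n/4}$ is met with room to spare by the slower $a^{-n/3}$ coming from the overshoot input.
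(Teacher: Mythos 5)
Your proposal takes the same route as the paper, which simply invokes the proof of Lemma~5.2 in \cite{KM11} with $\delta$ replaced by $1-\delta$: part~(i) is a direct consequence of the overshoot/undershoot bounds in Lemma~\ref{os}, and part~(ii) is an approximate-martingale argument built on the discrete scale function $v\mapsto v^{1-\delta}$. Your checks of the cancellation $\delta f'(v)+vf''(v)=0$, the per-step drift estimate, the bound $E_x[\gamma]=O(a^n)$ via the martingale of Lemma~\ref{mm}, and the final linear solve giving error $O(a^{-n/3})$ are all correct and indeed beat the required $a^{-n/4}$. Two small points deserve correction, though. First, the second device you offer for evading the singularity of $f$ at $0$, namely stopping at $\gamma\wedge\sigma^V_{\lceil a^{n-1}-a^{2(n-1)/3}\rceil}$, is vacuous: that threshold lies below $a^{n-1}$, so $\sigma^V_{\lceil a^{n-1}-a^{2(n-1)/3}\rceil}\ge\gamma$ and the minimum is just $\gamma$; the right fix is the first one you name, i.e.\ passing to a bounded modification such as $\tilde f(y)=(y+1)^{1-\delta}$ (or, equivalently, capping $f(V_\gamma)$ at $f(\lceil a^{n-1}-a^{2(n-1)/3}\rceil)$ and absorbing the exceptional branch via part~(i)). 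Second, the Lagrange remainder bound $|f'''(\xi)|\lesssim V_k^{-\delta-2}$ only holds when $\xi$ stays comparable to $V_k$, so one should split off the event $\{|V_{k+1}-V_k|\ge V_k/2\}$, on which Lemma~\ref{gld} together with the boundedness of $\tilde f$ gives a super-polynomially small contribution; after this the third-absolute-moment bound $O(V_k^{3/2})$ applies cleanly. With these repairs the argument is complete and matches \cite{KM11}.
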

\begin{proof}
  The proof is identical to that of \cite[Lemma 5.2]{KM11} if one
  replaces $\delta$ with $1-\delta$ throughout.
\end{proof} 
\begin{lemma}\label{5.3}Let $\delta>1, a\in(1,2]$, $\la\in(0,1/8)$, and 
\[h_\ell^\pm(m):=\prod_{r={\ell+1}}^m\left(a^{\delta-1}\mp a^{-\la r}\right)^{-1}\qquad\mbox{for all $\ell,m\in\N,\ \ell<m$}.
\]
Then there is $\ell_0\in\N$ such that if $\ell,m,x\in\N$ satisfy $\ell_0\le \ell<m$ and $|x-a^m|\le a^{2m/3}$ then
\begin{equation}\label{ip}
h_\ell^-(m)\le P_x[\si_{a^\ell}^V<\infty]\le
h_\ell^+(m).
\end{equation}
Moreover, there are $K_1,K_2:\N\to(0,\infty)$ such that $K_1(\ell), K_2(\ell)\to 1$ as $\ell\to\infty$ 
and
\begin{equation}\label{pp}
K_1(\ell)\le h_\ell^\pm(m)\ a^{(m-\ell)(\delta-1)}\le K_2(\ell)
\end{equation}
for all $m>\ell\ge 0$.
\end{lemma}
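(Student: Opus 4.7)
The plan is to prove (\ref{pp}) by direct product asymptotics and (\ref{ip}) by a discrete sub/super-solution (maximum-principle) argument, with the one-step recursion coming from Lemma~\ref{5.2}.

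For (\ref{pp}), I would factor out $a^{\delta-1}$ from each term:
\[
h_\ell^\pm(m)\,a^{(m-\ell)(\delta-1)}=\prod_{r=\ell+1}^{m}\bigl(1\mp a^{-\la r-(\delta-1)}\bigr)^{-1}.
\]
Since $\delta>1$ and $\la>0$, the series $\sum_{r>\ell}a^{-\la r-(\delta-1)}$ is summable, the denominators in the ``$-$'' case are positive for all $r\ge 1$, and the infinite products $K_1(\ell):=\prod_{r>\ell}(1+a^{-\la r-(\delta-1)})^{-1}$ and $K_2(\ell):=\prod_{r>\ell}(1-a^{-\la r-(\delta-1)})^{-1}$ converge with $K_1(\ell)\le 1\le K_2(\ell)$. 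Every partial product lies between them, and both tend to $1$ as $\ell\to\infty$ since the tail vanishes.

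For (\ref{ip}), set $F^\pm(n):=\sup/\inf\{P_x[\si_{a^\ell}^V<\infty]:x\in\N,\ |x-a^n|\le a^{2n/3}\}$. Applying the strong Markov property at $\ga:=\inf\{k:V_k\notin(a^{n-1},a^{n+1})\}$ together with Lemma~\ref{5.2} (part (ii) gives that $V_\ga$ lands below or above with probabilities $p+O(a^{-n/4})$ and $q+O(a^{-n/4})$, where $p:=(a^{\delta-1}+1)^{-1}$ and $q:=1-p$; part (i) gives that with probability at least $1-e^{-a^{n/4}}$ the landing lies in the $a^{2(n-1)/3}$-tolerance zone of level $n-1$ or $n+1$) yields
\begin{align*}
F^+(n)&\le(p+Ca^{-n/4})F^+(n-1)+(q+Ca^{-n/4})F^+(n+1)+e^{-a^{n/4}},\\
F^-(n)&\ge(p-Ca^{-n/4})F^-(n-1)+(q-Ca^{-n/4})F^-(n+1).
\end{align*}
I would then verify that $u_n:=h_\ell^+(n)$ and $v_n:=h_\ell^-(n)$ are, respectively, a super- and a sub-solution of these for all $n\ge\ell_0$ with $\ell_0$ large. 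Using $u_{n-1}/u_n=a^{\delta-1}-a^{-\la n}$ and $u_{n+1}/u_n=(a^{\delta-1}-a^{-\la(n+1)})^{-1}$,
\[
\frac{p\,u_{n-1}+q\,u_{n+1}}{u_n}=1-\frac{a^{-\la n}\bigl(1-a^{-\la-(\delta-1)}\bigr)}{a^{\delta-1}+1}+O(a^{-2\la n}),
\]
which is strictly less than $1$ by order $a^{-\la n}$; since $\la<1/8<1/4$, both the perturbation $Ca^{-n/4}(u_{n-1}+u_{n+1})\asymp a^{-n/4}u_n$ and the overshoot $e^{-a^{n/4}}\ll u_n a^{-\la n}$ are absorbed, and the analogous calculation for $v_n$ gives the sub-solution inequality with the sign of the leading correction flipped.

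Finally, I would conclude by a discrete maximum principle on a truncated interval: replace $\si_{a^\ell}<\infty$ by $\si_{a^\ell}<\tau_{a^N}$, compare the truncated quantities $F^\pm_N$ with $u_n$ and $v_n$ on $\ell\le n\le N$ with boundary data $F^\pm_N(\ell)=1=u_\ell=v_\ell$ and trivial bounds at $n=N$, and let $N\to\infty$ using monotone convergence and $u_n,v_n\to 0$. The main obstacle is the sub/super-solution verification: the precise ratio $p/q=a^{1-\delta}$ must match the characteristic root of the limiting recursion so that the $O(1)$ part of $(p u_{n-1}+q u_{n+1})/u_n$ cancels to exactly $1$, leaving a correction of the correct sign of size $\asymp a^{-\la n}$; only this precise cancellation dominates both the $O(a^{-n/4})$ error in Lemma~\ref{5.2}(ii) and the $e^{-a^{n/4}}$ overshoot from Lemma~\ref{5.2}(i), and a weighted version of the comparison (or the strict $\asymp u_n a^{-\la n}$ slack retained in the super-solution) is needed to handle the fact that $(p+Ca^{-n/4})+(q+Ca^{-n/4})$ slightly exceeds $1$.
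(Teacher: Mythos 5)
Your approach matches the paper's: (\ref{pp}) by factoring out $a^{\delta-1}$ and taking infinite products, and (\ref{ip}) by using Lemma~\ref{5.2} (together with the overshoot bounds of Lemma~\ref{os}) and the strong Markov property to set up a one-step recursion over the exponential scales $a^n$ and then comparing $h_\ell^\pm$ with exit probabilities via a sub/super-solution (maximum-principle) argument, letting the truncation level tend to infinity -- exactly the scheme the paper inherits, nearly ``word for word,'' from \cite[Lemma~5.3]{KM11}. One small adjustment: your ``trivial bounds at $n=N$'' are not trivial for the lower bound, since the truncated probability vanishes at $n=N$ while $v_N=h_\ell^-(N)>0$; the paper sidesteps this by comparing the normalized ratios $(1-h_\ell^\pm(n))/(1-h_\ell^\pm(u))$ with the two-sided escape probability $P_x[\tau_{a^u}^V<\si_{a^\ell}^V]$, whose boundary values are $0$ at $n=\ell$ and $1$ at $n=u$ on both sides, and only then lets $u\to\infty$.
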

\begin{proof}
Inequalities (\ref{ip}) will
follow if we show that for all
$u>m$,
\[\frac{h_\ell^-(m)-h_\ell^-(u)}{1-h_\ell^-(u)}\le
P_x[\si_{a^\ell}^V<\tau_{a^u}^V]\le
\frac{h_\ell^+(m)-h_\ell^+(u)}{1-h_\ell^+(u)}\] 
and then let
$u\to\infty$. The inequalities in the previous line are equivalent
to 
\[\frac{1-h_\ell^+(m)}{1-h_\ell^+(u)}\le
P_x[\si_{a^\ell}^V>\tau_{a^u}^V]\le
\frac{1-h_\ell^-(m)}{1-h_\ell^-(u)}.\] 
The proof of the last
inequalities follows essentially word for word the one of
\cite[Lemma~5.3]{KM11} and uses Lemma~\ref{os} ((\ref{over}) and (\ref{under})), and
Lemma~\ref{5.2} in place of Lemma~5.1 and Lemma~5.2 of \cite{KM11},
respectively. We omit the details.

To see (\ref{pp}) we notice that $h_\ell^-(m)\le h_\ell^+(m)$ for all
$m>\ell$ and
\begin{align*}
  h_\ell^+(m)a^{(m-\ell)(\delta-1)}&=\prod_{r=\ell+1}^m (1-a^{-\lambda r-\delta+1})^{-1}\le
\prod_{r=\ell+1}^\infty (1-a^{-\lambda
  r})^{-1}=:K_1(\ell)<\infty;\\
h_\ell^-(m)a^{(m-\ell)(\delta-1)}&=
\prod_{r=\ell+1}^m (1+a^{-\lambda r-\delta+1})^{-1}\ge\prod_{r=\ell+1}^\infty (1+a^{-\lambda r})^{-1} =:K_2(\ell)>0.
\end{align*}
Clearly, $K_1(\ell), K_2(\ell)\to 1$ as $\ell\to\infty$.
\end{proof}
\begin{lemma}\label{5.5ab} Let $\delta<1$.
For every $\ga>0$ there is $\con{eps}(\ga)\in(0,\infty)$ such that 
$P_n[\tau_{c_{\ref{eps}}n}^{V}<\si_0^{V}]<\ga$ for all $n\in\N$. Moreover,  $P_n[\si_0^V<\infty]=1=P_n[\si_0^{\widetilde V}<\infty]$. Similarly, for
every  $\ga>0$ there is  $\con{eps2}(\ga)\in(0,\infty)$ such that  
$P_n[\tau_{c_{\ref{eps2}}n}^{\widetilde V}<\si_0^{\widetilde V}]<\ga$
for all $n\in\N$.
\end{lemma}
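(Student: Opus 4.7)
The plan is to establish the statements for $\widetilde V$ first, exploiting the martingales from Lemma \ref{mm}, and then transfer them to $V$ via the natural coupling in which both chains share the offspring variables $(\xi_m^{(k)})$. Under this coupling with $V_0=\widetilde V_0=n\ge M$, the $\vee M$ truncation in (\ref{vt}) is vacuous whenever $\widetilde V_k\ge M$, so $V_k=\widetilde V_k$ up to the first time either process drops below $M$; in particular $\tau_{Cn}^V\wedge\sigma_M^V=\tau_{Cn}^{\widetilde V}\wedge\sigma_M^{\widetilde V}$. Once $V$ enters $\{0,\dots,M-1\}$, it reaches $0$ in one step with the positive probability $P[(\xi_1^{(1)},\dots,\xi_M^{(1)})=\mathbf{0}]>0$ granted by (\ref{a3}), and iterating the strong Markov property bounds the probability that subsequent excursions from small states reach $Cn$. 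The case $n<M$ is absorbed by enlarging $C$ using (WEL).

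The core bound is $P_n[\tau_{Cn}^{\widetilde V}<\sigma_0^{\widetilde V}]<\ga$. When $\delta\le 0$ this is immediate from optional stopping applied to the martingale $M_k=\widetilde V_k-k\delta$ at $T:=\tau_{Cn}^{\widetilde V}\wedge\sigma_0^{\widetilde V}\wedge N$: using $\widetilde V_{\sigma_0^{\widetilde V}}\ge -M$ and $\widetilde V_{\tau_{Cn}^{\widetilde V}}\ge Cn$ we obtain
\[
Cn\cdot P_n[\tau_{Cn}^{\widetilde V}\le T]-M\le E_n[\widetilde V_T]=n+\delta E_n[T]\le n,
\]
hence $P_n[\tau_{Cn}^{\widetilde V}<\sigma_0^{\widetilde V}]\le(n+M)/(Cn)\le(1+M)/C$, which is $<\ga$ for $C$ large. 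When $0<\delta<1$ the linear martingale is insufficient (the drift contribution $\delta E_n[T]$ may dominate the boundary terms), so I would instead use the scale function $\phi(x):=x^{1-\delta}$ on $[1,\infty)$, extended by $0$ on $(-\infty,0]$---the discrete analog of the scale function of the squared Bessel SDE (\ref{bes}). A third-order Taylor expansion combined with $E[\widetilde V_{k+1}-\widetilde V_k\mid\F_k]=\delta$ and the variance formula from Lemma \ref{mm} produces an exact cancellation of the leading $\widetilde V_k^{-\delta}$ terms in $E[\phi(\widetilde V_{k+1})-\phi(\widetilde V_k)\mid\F_k]$, leaving a per-step error whose moments are controlled via the fourth-moment hypothesis in (\ref{a3}) and Lemma \ref{gld}. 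Careful bookkeeping of the cumulative error, using that typical trajectories spend the bulk of their time at scale $n$ until absorption, yields $E_n[\phi(\widetilde V_T)]\le\phi(n)+O(\phi(n))$; since $\phi(\widetilde V_T)\ge(Cn)^{1-\delta}$ on $\{\tau_{Cn}^{\widetilde V}\le T\}$ and $\phi(\widetilde V_T)\ge 0$ otherwise, optional stopping gives $P_n[\tau_{Cn}^{\widetilde V}<\sigma_0^{\widetilde V}]\le C^{\delta-1}(1+o(1))$, which is $<\ga$ for $C$ large.

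The a.s.\ extinction $P_n[\sigma_0^{\widetilde V}<\infty]=1$ then follows: for $\delta<0$ directly from the bound $E_n[\sigma_0^{\widetilde V}]\le(n+M)/|\delta|$ obtained in the same optional stopping; for $\delta=0$ from the second-order martingale $M_k^2-A_k$ (with $A_k\ge vk$) combined with the above confinement of $\widetilde V$ below $Cn$ with high probability; for $0<\delta<1$ from the tail bound itself, which on $\{\sigma_0^{\widetilde V}=\infty\}$ forces infinitely many visits of $\widetilde V$ to $\{1,\dots,M\}$, each independently triggering extinction with positive probability by (\ref{a3}), so Borel--Cantelli rules out non-extinction. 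The statements for $V$ then follow from the coupling. The main obstacle is the $0<\delta<1$ case of the tail bound: the second-order cancellation in the scale function argument must be verified exactly, and the cumulative third-order Taylor remainder over $O(n)$ steps---including the potentially problematic contribution from times when $\widetilde V_k$ is small---must be shown not to exceed the order of $\phi(n)$. The fourth-moment assumption in (\ref{a3}) and the exponential tails from Lemma \ref{gld} appear designed precisely to make this bookkeeping go through.
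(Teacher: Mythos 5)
Your proposal goes in the opposite direction from the paper, and the hardest piece is left open. The paper derives the $V$-statement first by simply citing the forward-BP analog of \cite[(5.5)]{KM11}, which rests on the multi-scale gambler's-ruin estimates of Lemma~\ref{5.3} (the products $h_\ell^\pm(m)$ behaving like $(a^\ell/a^m)^{1-\delta}$, built in turn from the dyadic one-step estimates of Lemma~\ref{5.2}), and only \emph{then} transfers to $\widetilde V$. You instead try to start with $\widetilde V$ via a direct supermartingale built from the scale function $x^{1-\delta}$. For $\delta\le 0$ your linear-martingale optional stopping is correct and elementary, and arguably cleaner than what is cited. But for $0<\delta<1$ you yourself flag that the Taylor-remainder bookkeeping — especially the per-step error near small and negative values of $\widetilde V_k$, accumulated over $O(n)$ steps — ``must be shown not to exceed the order of $\phi(n)$'' without actually showing it. That is precisely the content that the $h_\ell^\pm$ machinery of \cite{KM11}/Lemma~\ref{5.3} supplies, and it is not a routine verification: the multi-scale construction is there because a naive one-scale Taylor expansion does not control the contributions from the small-state regime. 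As written, the $0<\delta<1$ case of your core bound is therefore a genuine gap, not a deferrable detail.

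The transfer step also deserves more care in your direction. Going from a tail bound for $\widetilde V$ to one for $V$ requires handling the second term in
\[
P_n\bigl[\tau^V_{Cn}<\si_0^V\bigr]\le P_n\bigl[\tau^V_{Cn}<\si^V_M\bigr]+\max_{0<i\le M}P_i\bigl[\tau^V_{Cn}<\si_0^V\bigr],
\]
and to kill the maximum you essentially need $P_i[\si_0^V<\infty]=1$ for small $i$ first — but that is one of the statements being proved, so the argument threatens to become circular and needs a careful decoupling. The paper avoids this entirely by proving the $V$-statement directly and then observing (i) $\si_0^V\wedge\tau^V_{Cn}<\infty$ a.s.\ by irreducibility (from (\ref{ind}) and (\ref{gg})), so $P_n[\si_0^V=\infty]\le P_n[\tau^V_{Cn}<\si_0^V]\to 0$; (ii) $P_n[\si_0^{\widetilde V}=1]>0$ together with $\widetilde V=V$ up to $\si^{\widetilde V}_M$ forces $P_n[\widetilde V_m\to\infty]=0$; and (iii) the decomposition at $\si^{\widetilde V}_M$ gives the $\widetilde V$-tail bound from the $V$-tail bound. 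This is shorter than your three-way case split ($\delta<0$, $\delta=0$, $0<\delta<1$) for extinction, and avoids the Borel--Cantelli argument, whose hypothesis (``infinitely many visits to $\{1,\dots,M\}$'') itself needs justification beyond what the tail bound directly provides.
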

\begin{proof}
For the proof of the first claim about $V$ see the proof of \cite[(5.5)]{KM11} replacing $\delta$ with $1-\delta$ throughout and using Lemma~\ref{5.3} instead of \cite[Lem.\ 5.3]{KM11}.

For the second statement observe that $P_n$-a.s.\ $\si_0^V\wedge \tau_{c_{\ref{eps}}n}^{V}<\infty$ due to (\ref{ind}) and (\ref{gg}). This together with the first statement implies $P_n$-a.s.\ $\si_0^V<\infty$.

For the third assertion observe that due to (\ref{ind}), (\ref{a3}), and (\ref{gg}), $P_n[\si_0^{\widetilde V}=1]>0$ for all $n\ge 1$. Therefore, $P_n[\si_0^{\widetilde V}=\infty]\le P_n[\widetilde V_m\to\infty]$, which is equal to 0 since $\widetilde V=V$ up to time $\si_M^{\widetilde V}$ and therefore for all $i\ge 1$, $P_i[\si_M^{\widetilde V}<\infty]=P_i[\si_M^{V}<\infty]\ge P_i[\si_0^{V}<\infty]=1$
as we have already shown.

For the final statement let $t=c_{\ref{eps}}(\ga/2)$. Then
\begin{eqnarray*}
\limsup_{n\to\infty}P_n\left[\tau_{tn}^{\widetilde V}<\si_0^{\widetilde V}\right]&\le& \limsup_{n\to\infty}P_n\left[\tau_{tn}^{\widetilde V}<\si_M^{\widetilde V}\right]+\limsup_{n\to\infty}P_n\left[\si_M^{\widetilde V}<\tau_{tn}^{\widetilde V}<\si_0^{\widetilde V}\right]\\
&\le& \limsup_{n\to\infty}P_n\left[\tau_{tn}^{V}<\si_M^{V}\right]+
\max_{i=1,\ldots,M}\limsup_{n\to\infty}P_i\left[\tau_{tn}^{\widetilde V}<\si_0^{\widetilde V}\right]\\
&\le&\ga/2+\max_{i} P_i[\si_0^{\widetilde V}=\infty].
\end{eqnarray*}
Since $P_i[\si_0^{\widetilde V}=\infty]=0$ for all $i$ 
this implies the claim. 
\end{proof}
\begin{cor}{\rm \bf (Extinction and survival of $V$)}\label{rc} 
If $\delta<1$ then $P_1[\si_0^V<\infty]=1$.
If $\delta>1$ then $P_1[\si_0^V=\infty]>0$ and for all $N\ge 0$, $P_n[\si_N^V<\infty]\to 0$ as $n\to\infty$.
\end{cor}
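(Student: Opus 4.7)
The proof naturally splits by the sign of $\delta-1$. For $\delta<1$ the claim $P_1[\si_0^V<\infty]=1$ is already the second assertion of Lemma \ref{5.5ab} specialized to $n=1$, so nothing remains to do in that case.

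For $\delta>1$ the plan is to handle the two sub-claims in turn. First, to establish $P_n[\si_N^V<\infty]\to 0$ as $n\to\infty$, fix any $a\in(1,2]$ and let $\ell_0$ be as in Lemma \ref{5.3}. Given $N\in\N_0$, choose $\ell\ge\ell_0$ with $a^\ell\ge N$, so that $\{\si_N^V<\infty\}\subseteq\{\si_{a^\ell}^V<\infty\}$. For every $n$ with $|n-a^m|\le a^{2m/3}$, the bounds (\ref{ip}) and (\ref{pp}) yield
\[
P_n[\si_N^V<\infty]\le P_n[\si_{a^\ell}^V<\infty]\le h_\ell^+(m)\le K_2(\ell)\,a^{-(m-\ell)(\delta-1)},
\]
which tends to $0$ as $m\to\infty$ since $\delta-1>0$. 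Along the subsequence $n_m:=\lfloor a^m\rfloor$ we therefore have $P_{n_m}[\si_N^V<\infty]\to 0$. A monotone coupling (share the variables $\xi_m^{(k)}$ between two copies of $V$ starting at $x\le y$, which forces $V_k^{(x)}\le V_k^{(y)}$ for all $k$) shows that $n\mapsto P_n[\si_N^V<\infty]$ is non-increasing in $n$, so this convergence extends to the full limit.

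For the remaining claim $P_1[\si_0^V=\infty]>0$, I would invoke the coupling of Proposition \ref{coup}. Since $\delta>1$ the ERW is transient with $X_n\to+\infty$ $P_0$-a.s.\ by the first bullet in the introduction, so $P_0[R=\infty]>0$. Conditioning on the first step and using that $P_{-1}[T_0<\infty]=1$ (a walk drifting to $+\infty$ from $-1$ must cross $0$), one obtains
\[
0<P_0[R=\infty]=\EE[\omega(0,1)]\,P_1[T_0=\infty],
\]
and since $\EE[\omega(0,1)]>0$ by (WEL), this forces $P_1[T_0=\infty]>0$. On the event $\{T_0=\infty\}$ the walk starting at $1$ stays positive and escapes to $+\infty$, hence upcrosses every bond $(k,k+1)$ at least once, so $U_k\ge 1$ for all $k\ge 0$. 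By (\ref{jim}) then $V_k\ge U_k\ge 1$ for every $k$, i.e.\ $\si_0^V=\infty$. Therefore $P_1[\si_0^V=\infty]\ge P_1[T_0=\infty]>0$.

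I do not foresee a serious obstacle. The only slightly delicate step is the interpolation from the sparse subsequence $n_m\approx a^m$ provided by Lemma \ref{5.3} to arbitrary large $n$, which is handled by the monotone coupling above; the positivity half is immediate once one links the forward BP to the known transience of the ERW through Proposition \ref{coup}.
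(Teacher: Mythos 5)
Your first two parts track the paper: the $\delta<1$ claim is read off Lemma \ref{5.5ab}, and the claim $P_n[\si_N^V<\infty]\to 0$ is deduced from Lemma \ref{5.3}; your extra step of interpolating from $n_m=\lfloor a^m\rfloor$ to all $n$ by a monotone coupling is a correct elaboration of the paper's terse ``Lemma \ref{5.3} implies''.

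For the remaining claim $P_1[\si_0^V=\infty]>0$, however, your route is genuinely different from the paper's and creates a circularity against the paper's stated aims. The paper argues internally, entirely on the level of the branching process: having just shown $P_n[\si_0^V<\infty]\to 0$, it picks $n$ with $P_n[\si_0^V<\infty]<1$, observes that $P_1[V_1\ge n]>0$, and propagates survival from generation $1$ to generation $0$ by the Markov property and the monotone coupling. You instead invoke the recurrence/transience dichotomy for ERW (the first bullet of the introduction) to get $P_0[R=\infty]>0$, convert this via the first-step decomposition and $P_{-1}[T_0<\infty]=1$ into $P_1[T_0=\infty]>0$, and then pass this back to $V$ through the coupling (\ref{jim}). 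Two concerns. First, the bullet point only asserts $X_n\to\pm\infty$ a.s.; the directional statement $X_n\to+\infty$ a.s.\ for $\delta>1$ that you use (to conclude $P_{-1}[T_0<\infty]=1$ and that the surviving excursion upcrosses every edge) is precisely the content of Corollary \ref{rc2}, which the paper \emph{derives} from the corollary you are proving. Second, the paper explicitly remarks after Corollary \ref{rc2} that it recovers the recurrence/transience criterion of \cite{KZ08} ``in a more self-contained way''; feeding that criterion into the proof of Corollary \ref{rc} would make Corollary \ref{rc2} a tautology. Your argument is not wrong as mathematics if one takes the transience theorem from the literature as given, but it buys nothing here: the paper's two-line internal argument avoids any appeal to ERW transience, is shorter, and preserves the logical independence needed for Corollary \ref{rc2} to have content. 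Replace your last paragraph with the Markov-chain argument (pick $n$ with $P_n[\si_0^V<\infty]<1$, note $P_1[V_1\ge n]>0$, then $P_1[\si_0^V=\infty]\ge P_1[V_1\ge n]\,P_n[\si_0^V=\infty]>0$) and the proof is in line with the paper.
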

\begin{proof}
The first claim is contained in Lemma \ref{5.5ab}. Lemma \ref{5.3} implies that  $P_n[\si_N^V<\infty]\to 0$ as $n\to\infty$. Choose $n\in\N$ such that $P_n[\si_0^V<\infty]<1$. Then
$P_1[\si_0^V=\infty]\ge P_1[V_1\ge n, \si_0^V=\infty]\ge P_n[\si_0^V=\infty]>0$.
\end{proof}
As a result we recover, except for the critical case $|\delta|=1$, the recurrence-transience criterion \cite[Th.\ 1]{KZ08} in a more self-contained way without using \cite{FYK90}.
\begin{cor}{\rm \bf (Recurrence and transience of ERW)}\label{rc2} 
If $|\delta|<1$ then the ERW $X$ returns a.s.\ infinitely often to its starting point. If $\delta>1$ then $X_n\to\infty$ a.s., whereas
$X_n\to-\infty$ a.s.\ if $\delta<-1$.
\end{cor}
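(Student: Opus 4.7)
The plan is to deduce Corollary \ref{rc2} from Corollary \ref{rc} via the coupling of Proposition \ref{coup}, together with the reflection symmetry $\om\mapsto 1-\om$, which sends $\delta$ to $-\delta$ and interchanges the roles of $+1$ and $-1$ steps. By the definition of $U$ in (\ref{uk}) we have $\{T_0<\infty\}=\{\si_0^U<\infty\}$; combined with (\ref{knopf}) this gives $P_1[T_0<\infty]=P_1[\si_0^V<\infty]$, and Corollary \ref{rc} then yields $P_1[T_0<\infty]=1$ when $\delta<1$ and $P_1[T_0=\infty]>0$ when $\delta>1$. Applying the same reasoning to the flipped environment $\widetilde\om:=1-\om$ (which still satisfies (IID), (WEL) and (BD$_\mathrm{M}$), but with parameter $-\delta$) produces the analogous statements for $P_{-1}[T_0<\infty]$ with $\delta$ replaced by $-\delta$.

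For the transient case $\delta>1$: on $\{T_0=\infty\}$ the equality $U=V$ from (\ref{knopf}) holds, and since the BP with a geometric offspring tail does not explode, $V_k<\infty$ a.s.\ for every $k$. Hence $U_k<\infty$ for all $k$, i.e., the walk makes only finitely many upcrossings at each level $k\ge 1$. Combined with $X_n\ge 1$ on $\{T_0=\infty\}$, this forces $X_n\to+\infty$ on that event. Since by (WEL) the first step from $0$ goes to $+1$ with positive probability, we conclude $P_0[X_n\to+\infty]>0$; a standard zero-one law for one-dimensional ERW under (IID), proved in \cite{Zer05}, then promotes this to $P_0[X_n\to+\infty]=1$. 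The case $\delta<-1$ follows from the reflection symmetry.

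For the recurrent case $|\delta|<1$: both $\delta<1$ and $-\delta<1$ hold, so $P_1[T_0<\infty]=P_{-1}[T_0<\infty]=1$. If $X_n\to+\infty$ had positive $P_0$-probability, conditioning on the first step being $+1$ would give $P_1[T_0=\infty]>0$, contradicting the above; similarly $X_n\not\to-\infty$ a.s. The $\pm 1$ nature of the steps then forces $\limsup X_n=+\infty$ and $\liminf X_n=-\infty$, provided one can exclude the possibility that the walk becomes trapped in a bounded subset of $\Z\setminus\{0\}$; this exclusion follows from (BD$_\mathrm{M}$), since once every site in such a bounded set has been visited $M$ times, the ERW coincides on that set with a simple symmetric random walk, which almost surely exits any finite interval. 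Consequently, the walk crosses $0$ at infinitely many times, establishing recurrence.

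The main obstacle is the last step of the recurrent case, namely the ``no trapping in a bounded set disjoint from $0$'' claim; while intuitively clear from (BD$_\mathrm{M}$) and the recurrence of the simple symmetric random walk, it needs a short separate verification. The appeal to the zero-one law of \cite{Zer05} in the transient case is also an external input, but it is a standard tool for one-dimensional ERW under (IID) and is not a consequence of the branching-process analysis developed in this paper.
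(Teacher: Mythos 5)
Your proposal follows the same skeleton the paper has in mind---deduce the trichotomy from Corollary~\ref{rc} via Proposition~\ref{coup} and the reflection $\om\mapsto 1-\om$---but the paper simply refers the reader to the proof of \cite[Th.\ 1]{KZ08} for the deduction, whereas you attempt to write out that deduction and introduce a genuine gap in the recurrent case.

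The gap is the step ``if $P_0[X_n\to+\infty]>0$, conditioning on the first step being $+1$ would give $P_1[T_0=\infty]>0$.'' The event $\{X_n\to+\infty\}$ does \emph{not} imply $T_0=\infty$: the walk can return to $0$ one or several times and still drift to $+\infty$ afterwards. Moreover, after each return to $0$ the cookie stacks along the excursion's range are partially consumed, so one cannot simply re-apply $P_1[T_0<\infty]=1$ (that equality refers to a fresh environment). The correct argument --- which is what \cite[Th.\ 1]{KZ08} supplies --- exploits (IID): at the time $T_n$ when the walk first reaches level $n$, the cookies at sites $\ge n$ are untouched and independent of the past, so the forward BP analysis applies to the upcrossing counts at levels $\ge n$, and from there one can rule out escape to $+\infty$ when $\delta<1$. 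Your write-up skips this iteration entirely, and you flag the ``no-trapping'' claim as the only outstanding issue, when in fact this conditional-probability step is the more serious one.

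Two smaller points. In the transient case you invoke (\ref{knopf}) to get $U=V$ on $\{T_0=\infty\}$, but on that event $\si_0^U=\infty$, and one can check that $\{T_0=\infty\}\cap\{\si_0^V<\infty\}$ is null, so (\ref{knopf}) is vacuous there; the bound $U_k\le V_k<\infty$ you actually need comes from (\ref{jim}). Also, the equality $\{T_0<\infty\}=\{\si_0^U<\infty\}$ is not purely definitional: the inclusion $\{\si_0^U<\infty\}\subseteq\{T_0<\infty\}$ already uses the no-trapping observation, since $\si_0^U<\infty$ only says the excursion stays below a finite level. Neither issue is fatal, and your appeal to the zero-one law of \cite{Zer05} for the transient case is appropriate --- but the recurrent-case implication as written does not hold and needs the fresh-territory iteration to be made correct.
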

\begin{proof}
The proof goes along the lines of proof of \cite[Th.\ 1]{KZ08} except that \cite[Prop. 9]{KZ08} (for $|\delta|\ne 1$) now follows from Corollary \ref{rc} and $P_1[T_0<\infty]=P_1[\si_0^V<\infty]$, see (\ref{uk}) and (\ref{knopf}). 
\end{proof}

{\bf Acknowledgment:} E. Kosygina was partially
supported by the Simons Foundation, Collaboration Grant for
Mathematicians \# 209493 . M.\ Zerner's work was supported by the
European Research Council, StG 208417-NCIRW.

\vspace*{2mm}

{\sc \small
\begin{tabular}{ll}
Department of Mathematics& \hspace*{20mm}Mathematisches Institut\\
Baruch College, Box B6-230& \hspace*{20mm}Universit\"at T\"ubingen\\
One Bernard Baruch Way&\hspace*{20mm}Auf der Morgenstelle 10\\
New York, NY 10010, USA&\hspace*{20mm}72076 T\"ubingen, Germany\\
{\verb+elena.kosygina@baruch.cuny.edu+}& \hspace*{20mm}{\verb+martin.zerner@uni-tuebingen.de+}
\end{tabular}
}

\end{document}